\newsavebox{\pullback}
\sbox\pullback{%
\begin{tikzpicture}%
\draw (0,0) -- (1ex,0ex);%
\draw (1ex,0ex) -- (1ex,1ex);%
\end{tikzpicture}}
\theoremstyle{definition}
\newtheorem{rmk}[subsubsection]{Remark}
\newtheorem{definition}[subsubsection]{Definition}
\newtheorem{constr}[subsubsection]{Construction}
\theoremstyle{plain}
\newtheorem{thm}[subsubsection]{Theorem}
\newtheorem*{thm*}{Theorem}
\newtheorem{prop}[subsubsection]{Proposition}
\newtheorem{lemma}[subsubsection]{Lemma}
\newtheorem{cor}[subsubsection]{Corollary}
\DeclareMathOperator\supp{supp}
\DeclareMathOperator\ASpec{Spa}
\DeclareMathOperator\VSpec{Spv}
\DeclareMathOperator\RZop{RZ}
\DeclareMathOperator{\spmap}{sp}
\DeclareMathOperator\FSpec{Spf}
\DeclareMathOperator\AGSpec{Spec}
\DeclareMathOperator{\pr}{pr}
\DeclareMathOperator{\red}{red}
\DeclareMathOperator{\rs}{rs}
\DeclareMathOperator{\fracfield}{Frac}
\newcommand{\Spa}[2]{\ASpec( #1 , #2 )}
\newcommand{\Spv}[1]{\VSpec(#1)}
\newcommand{\Spf}[1]{\FSpec(#1)}
\newcommand{\Spec}[1]{\AGSpec(#1)}
\newcommand{\RZ}[2]{\RZop( #1 , #2 )}
\newcommand{\inv}{^{-1}}
\newcommand{\Oh}{\mathcal{O}}
\newcommand{\tate}[1]{\langle #1 \rangle}
\newcommand{\urs}{_{\rs}}
\newcommand{\ttilde}[1]{\widetilde{#1}}
\newcommand{\bbar}[1]{\overline{#1}}
\newcommand{\what}[1]{\widehat{#1}}
\newcommand{\XX}{\mathfrak{X}}
\newcommand{\ZZ}{\mathfrak{Z}}
\newcommand{\VV}{\mathfrak{V}}
\newcommand{\sA}{\mathscr{A}}
\newcommand{\II}{\mathcal{I}}
\newcommand{\bXX}{\bbar{\XX}}
\title
[Compactifications of rigid analytic spaces through formal models]
{Compactifications of rigid analytic spaces \\ through formal models}
\author{Mateusz Kobak}
\address{Instytut Matematyczny PAN, ul. Śniadeckich 8, 00-656 Warszawa}
\email{mateusz.m.kobak@gmail.com}
\date{\today}
\begin{document}
\begin{abstract}
We provide a~new construction of Huber's universal compactification in the case of the structure morphism of a~quasi-compact, separated rigid analytic space over a non-archimedean field. 
We make use of Raynaud's theory of formal models and Temkin's relative Riemann--Zariski spaces.
\end{abstract}

\maketitle
\tableofcontents

\section{Introduction}

In his book on the \'etale cohomology of adic spaces \cite{HuEt}, Huber constructed a universal way of compactifying a morphism of analytic adic spaces (under some natural assumptions, see \S\ref{ss:huber-comp}). He then successfully used these objects to define cohomology with compact support. Since then, Huber's construction has been widely used in the study of rigid analytic geometry.


However, Huber's construction of compactifications has a reputation for being involved and hard to understand. In this article, we would like to show that it can be seen from a new perspective. Namely, we carry out the construction of these compactifications in terms of Raynaud's formal models. 

Let us recall the following well-known theorem:
\begin{thm}[{\textit{folklore}}]\label{thm:adic_as_limit}
Let $X$ be a quasi-paracompact, quasi-separated rigid space\footnote{To be precise, here and below by a rigid space we mean an adic space locally of finite type over $\Spa{K}{\Oh_K}$ for some non-archimedean field $K$.} and let $\mathcal{M}(X)$ be the category of admissible formal models of $X$ in the sense of Raynaud. Then there is a natural isomorphism of locally ringed spaces
$$
(X, \Oh_X^+) \simeq \varprojlim_{\mathfrak{X} \in \mathcal{M}(X)} (\mathfrak{X}, \Oh_\mathfrak{X}).
$$
\end{thm}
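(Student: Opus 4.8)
The plan is to reduce to the affinoid case and to compare the two sides along the specialization maps. First, recall that by Raynaud's theorem---in its quasi-paracompact variant---the category $\mathcal{M}(X)$ is cofiltered: any two admissible formal models of $X$ are dominated by a common admissible blowup, and the admissible blowups of a fixed model form a directed system. Hence the inverse limit $\varprojlim_{\XX\in\mathcal{M}(X)}(\XX,\Oh_\XX)$ exists in the category of locally ringed spaces; its underlying space is $\varprojlim_\XX|\XX|$ and its structure sheaf is $\colim_\XX\spmap_\XX\inv\Oh_\XX$, where $\spmap_\XX\colon X\to\XX$ is the specialization map. The maps $\spmap_\XX$ are compatible with the transition morphisms of $\mathcal{M}(X)$, and each is a morphism of locally ringed spaces whose comorphism sends a local section $f$ of $\Oh_\XX$ to the function recording the value of $f$ at the center of a point---a section of $\Oh_X^+$. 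They therefore assemble into a natural morphism of locally ringed spaces
$$
\iota\colon (X,\Oh_X^+)\longrightarrow \varprojlim_{\XX\in\mathcal{M}(X)}(\XX,\Oh_\XX),
$$
and it remains to show that $\iota$ is an isomorphism.

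On underlying spaces, I would invoke two foundational inputs of the Raynaud--Bosch--Lütkebohmert theory: (a) every $\spmap_\XX$ is a surjective, closed, spectral map of spectral spaces; and (b) the quasi-compact open subsets of $X$ are exactly the preimages $\spmap_\XX\inv(\mathfrak U)$ of Zariski-open $\mathfrak U\subseteq\XX$, and conversely every Zariski-open of every model takes this form after refining $\XX$. Granting (a), surjectivity of $\iota$ follows because, for $(\xi_\XX)\in\varprojlim_\XX|\XX|$, the fibers $\spmap_\XX\inv(\xi_\XX)$ are nonempty and pro-constructible in $X$ and form a cofiltered family, so---$X$ being spectral, hence compact in the constructible topology---their intersection is nonempty, and any of its points maps to $(\xi_\XX)$. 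Injectivity follows because two distinct points $x\neq y$ of $X$ acquire distinct centers already on the admissible blowup of a suitable ideal $(f,g,\varpi^N)\subseteq A_0$, where $f,g$ and $N\gg 0$ are chosen so that the two centers land in different charts---separating supports when the supports differ, and otherwise separating the valuations by a function $f/g$ with $v_x(f/g)\le 1<v_y(f/g)$. Finally, by (b), $\iota$ induces a bijection between the quasi-compact opens of the two sides; since these form bases of the respective spectral topologies, $\iota$ is a homeomorphism.

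For the structure sheaves, I would first globalize and then reduce to the affinoid case. Fix a locally finite covering of $X$ by affinoid opens; it suffices to show that $\iota$ is an isomorphism on the rational subsets $U=\Sp{B}$ of its members, as these form a basis of $X$ and both sides are sheaves. Bosch--Lütkebohmert's extension theorem---every admissible blowup of an open of a quasi-compact admissible formal scheme extends to one of the whole scheme---shows that the models of $U$ arising (after refinement) as open subschemes of models of $X$ are cofinal in $\mathcal{M}(U)$, which reduces the claim over $U$ to the affinoid case. So take $X=\Sp{A}$ with $A$ topologically of finite type over $K$, so that $\Oh_X^+(X)=A^{\circ}$, and prove that the natural map $\colim_{\XX\in\mathcal{M}(X)}\Gamma(\XX,\Oh_\XX)\to A^{\circ}$ is bijective. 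It is injective because for an admissible blowup $\XX\to\Spf{A_0}$ the ring $\Gamma(\XX,\Oh_\XX)$ is $\varpi$-torsion-free (as $\Oh_\XX$ is), finite over $A_0$ (coherence and properness of the blowup), satisfies $\Gamma(\XX,\Oh_\XX)[\varpi\inv]=A$ (global sections commute with passage to the rigid fiber for blowups), and is $\varpi$-adically bounded in $A$ (being topologically of finite type over $\Oh_K$), hence embeds into $A^{\circ}$. It is surjective because, given $f\in A^{\circ}$ and any model $A_0$, the $\Oh_K$-subalgebra $A_0[f]\subseteq A$ is $\varpi$-adically bounded (a finite sum of a bounded set is bounded, by the ultrametric inequality); its $\varpi$-adic completion is topologically of finite type over $\Oh_K$, and killing its $\varpi$-torsion yields an admissible $\Oh_K$-algebra $A_1$ with $A_0\subseteq A_1\subseteq A$, $A_1[\varpi\inv]=A$ and $f\in A_1$, so $f$ lies in the image of $\Gamma(\Spf{A_1},\Oh)\to A^{\circ}$. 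Since the filtered colimit computing a stalk of the limit is a colimit of local rings along local homomorphisms, the right-hand side is genuinely locally ringed and $\iota$ is a morphism of locally ringed spaces; being bijective on a basis, it is an isomorphism.

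The main obstacle, to my mind, is not any single computation but the two structural facts everything rests on: that the quasi-compact opens of a rigid space are cut out by Zariski-opens of formal models, and that admissible blowups of opens extend to the ambient scheme. These are precisely the Raynaud--Bosch--Lütkebohmert results that make both the homeomorphism of underlying spaces and the reduction to the affinoid case work; once they are available, the spectral-space limit argument and the boundedness computation above are routine, the latter being the only step where genuine non-archimedean analysis enters.
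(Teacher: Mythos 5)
The paper does not actually prove Theorem~\ref{thm:adic_as_limit}: it is explicitly labelled \emph{folklore} and cited only as a motivating analogy for Theorem~\ref{thm:main}, so there is no proof in the paper against which to compare. Judged on its own, your sketch has the right ingredients and the right overall shape (cofilteredness of $\mathcal{M}(X)$, specialization maps as locally ringed maps into the limit, a topological argument via spectral spaces, and an affinoid computation of $\Oh^+$ as a filtered colimit of $\Gamma(\XX,\Oh_\XX)$). Two points deserve more care, though neither is fatal. First, the surjectivity argument appeals to compactness of $X$ in the constructible topology, which requires $X$ quasi-compact; for merely quasi-paracompact $X$ you must first fix one model $\XX_0$, take a quasi-compact open $\UU_0\ni\xi_{\XX_0}$, and run the argument inside the quasi-compact open $\spmap_{\XX_0}\inv(\UU_0)$, using a cofinal subsystem. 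Second, the injectivity argument separates two points by blowing up an ideal $(f,g,\varpi^N)$ in a single affinoid ring $A_0$; for quasi-paracompact $X$ the points $x\neq y$ need not lie in a common affinoid chart, so you should either reduce first to the quasi-compact case via the covering, or note that if $x,y$ lie in distinct members of a locally finite cover whose intersection they both avoid, they are already separated on any model realizing that cover. Finally, when identifying $\Oh_{\ttilde X}$ on rational opens with the colimit of sections, you are implicitly using that filtered colimits of sheaves on a spectral space commute with sections over quasi-compact opens; this is true but worth stating, since it is precisely what makes ``bijective on a basis'' suffice.
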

This fact serves as a bridge between two approaches to rigid geometry: between the world of valuations (Huber's adic spaces) and formal algebraic geometry (Raynaud's formal models). Although Huber's compactifications are no longer rigid spaces, but rather more general adic spaces, a sort of analogous bridge can be established for them and we provide it in this article.

Fix a quasi-compact, separated rigid space $X$ and let $\mathcal{M}(\bbar{X})$ be the category of pairs $(\mathfrak{X}, \bbar{\mathfrak{X}}_s)$ such that $\mathfrak{X}$ is an admissible formal model of $X$ and $\bbar{\mathfrak{X}}_s$ is a compactification of the special fiber $\mathfrak{X}_s$ of $\mathfrak{X}$. The main result of this article is the following theorem.
\begin{thm*}[see Theorem {\ref{thm:main}}]
Let $\bbar{X}$ be the universal compactification of $X$ in the sense of \cite{HuEt}. Then there exists a natural isomorphism of locally ringed spaces
$$
(\bbar{X}, \Oh_{\bbar{X}}^+) \simeq \varprojlim_{(\mathfrak{X}, \bbar{\mathfrak{X}}_s) \in \mathcal{M}(\bbar{X})} (\bbar{\mathfrak{X}}_s, \Oh_{(\mathfrak{X}, \bbar{\mathfrak{X}}_s)}),
$$
where $\Oh_{(\mathfrak{X}, \bbar{\mathfrak{X}}_s)}$ is the subsheaf of $\Oh_{\mathfrak{X}}$ consisting of functions that can be extended to $\bbar{\mathfrak{X}}_s$ (see Definition~\ref{def:main} for a precise construction of $\Oh_{(\mathfrak{X}, \bbar{\mathfrak{X}}_s)}$).
\end{thm*}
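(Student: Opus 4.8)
The plan is to follow the architecture of the proof of Theorem~\ref{thm:adic_as_limit}: first identify the underlying topological spaces, and then the structure sheaves, reducing the latter to a computation of stalks by standard facts about spectral spaces. Since both sides of the claimed isomorphism are obtained by gluing over the affine pieces of $\mathfrak{X}$, and the comparison I produce will be canonical, I would assume $\mathfrak{X}$ affine throughout. A preliminary point is that $\mathcal{M}(\bbar X)$ is cofiltered, so that the limit behaves well and the stalks of the limit sheaf are filtered colimits: this follows from cofilteredness of $\mathcal{M}(X)$ (Raynaud's theorem on admissible formal blow-ups), from the fact that any two compactifications of the special fibre $\mathfrak{X}_s$ over the residue field $k$ of $\Oh_K$ are dominated by a third (Nagata's theorem, in the form underlying Temkin's theory of relative Riemann--Zariski spaces), and from compatibility of the two, namely that an admissible blow-up $\mathfrak{X}' \to \mathfrak{X}$ induces a proper morphism $\mathfrak{X}'_s \to \mathfrak{X}_s$ of special fibres which extends over any prescribed compactification after enlarging the target. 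Thus $\varprojlim_{(\mathfrak{X},\bbar{\mathfrak{X}}_s)}$ may be computed cofinally as a limit over $\mathfrak{X} \in \mathcal{M}(X)$ of limits over the compactifications of $\mathfrak{X}_s$, the inner limit being a relative Riemann--Zariski space, in Temkin's sense, of $\mathfrak{X}_s$ over $k$.

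For the topological comparison I would construct a compatible system of ``boundary specialization'' maps $\bbar{\spmap}_{(\mathfrak{X},\bbar{\mathfrak{X}}_s)} \colon \lvert\bbar X\rvert \to \lvert\bbar{\mathfrak{X}}_s\rvert$ extending the usual $\spmap_{\mathfrak{X}}\colon X \to \mathfrak{X}_s \hookrightarrow \bbar{\mathfrak{X}}_s$ of Theorem~\ref{thm:adic_as_limit}, and show the induced map $\lvert\bbar X\rvert \to \varprojlim_{(\mathfrak{X},\bbar{\mathfrak{X}}_s)} \lvert\bbar{\mathfrak{X}}_s\rvert$ is a homeomorphism. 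Recall (see \S\ref{ss:huber-comp}) that a point $x$ of the universal compactification of $X \to \Spa{K}{\Oh_K}$ carries a valuation ring $\Oh_{\bbar X,x}^+$ over $\Oh_K$, and that $X \subseteq \bbar X$ is the dense open locus where this ring is largest. Concretely, reducing $\Oh_{\bbar X,x}^+$ modulo its ideal of topologically nilpotent elements yields a $k$-valuation ring whose center on the proper $k$-scheme $\bbar{\mathfrak{X}}_s$ is, by the valuative criterion of properness, well defined and unique; this is $\bbar{\spmap}_{(\mathfrak{X},\bbar{\mathfrak{X}}_s)}(x)$, it recovers $\spmap_{\mathfrak{X}}(x)$ when $x$ is interior, and uniqueness in the valuative criterion makes the system compatible under morphisms of $\mathcal{M}(\bbar X)$. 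Injectivity of the induced map: two distinct points of $\bbar X$ either have distinct images in $X$, hence are separated in some $\mathfrak{X}_s$ by Theorem~\ref{thm:adic_as_limit}, or carry distinct boundary valuations, which are separated on a suitable compactification since every $k$-valuation ring of the relevant residue field is realised as a center on some compactification of $\mathfrak{X}_s$. For surjectivity I would use the standard Riemann--Zariski compactness argument: a compatible family of centers $(y_{(\mathfrak{X},\bbar{\mathfrak{X}}_s)})$ determines, via the filtered colimit of the local rings $\Oh_{\bbar{\mathfrak{X}}_s,\,y}$ along the transition maps, a valuation ring over $\Oh_K$ (such a colimit of local rings of proper models being a valuation ring), hence a point of $\bbar X$; equivalently, this is where one verifies the Raynaud-type assertion that the formal compactifications $\bbar{\mathfrak{X}}_s$ are cofinal among Huber's adic compactifications of $X$. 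Finally the bijection is a homeomorphism: $\bbar X$ is a quasi-compact, quasi-separated, analytic adic space (hence spectral, $X$ being quasi-compact and separated), the target limit is spectral, the map is spectral, and every quasi-compact open of $\bbar X$ — a finite union of rational subsets in Huber's sense — is the preimage of a quasi-compact open of some $\bbar{\mathfrak{X}}_s$, which combines the corresponding fact for $X$ (Theorem~\ref{thm:adic_as_limit}) with a description of the opens of $\bbar X$ in terms of those of $X$ and of the boundary.

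It remains to match the structure sheaves under this homeomorphism. There is a natural morphism $\varprojlim_{(\mathfrak{X},\bbar{\mathfrak{X}}_s)} \bbar{\spmap}_{(\mathfrak{X},\bbar{\mathfrak{X}}_s)}^{-1}\Oh_{(\mathfrak{X},\bbar{\mathfrak{X}}_s)} \to \Oh_{\bbar X}^+$ — a function that can be extended to $\bbar{\mathfrak{X}}_s$ pulls back to a power-bounded function on $\bbar X$ — and because sections over quasi-compact opens of a cofiltered limit of spectral spaces are computed by filtered colimits, it is enough to check that this morphism is an isomorphism on stalks, i.e.\ that for $x \in \bbar X$ with $y = \bbar{\spmap}_{(\mathfrak{X},\bbar{\mathfrak{X}}_s)}(x)$ one has
\[
\Oh_{\bbar X,x}^+ \;\simeq\; \varinjlim_{(\mathfrak{X},\bbar{\mathfrak{X}}_s)} \Oh_{(\mathfrak{X},\bbar{\mathfrak{X}}_s),\,y}
\]
compatibly with the local structure (the right-hand side is a filtered colimit of local rings along local homomorphisms, hence local). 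For $x$ interior this is precisely Theorem~\ref{thm:adic_as_limit}, as $\Oh_{(\mathfrak{X},\bbar{\mathfrak{X}}_s)}$ coincides with $\Oh_{\mathfrak{X}}$ near $\mathfrak{X}_s$ by Definition~\ref{def:main}. The substance — and what I expect to be the main obstacle — is the case of a boundary point: there $\Oh_{(\mathfrak{X},\bbar{\mathfrak{X}}_s),\,y}$ is, by Definition~\ref{def:main}, the ring of germs near $y$ of functions in $\Oh_{\mathfrak{X}}$ that can be extended across the boundary of $\bbar{\mathfrak{X}}_s$ — an integrality/boundedness condition in the spirit of Temkin's Riemann--Zariski structure sheaves — and one must show that the filtered colimit of these rings, over all admissible blow-ups and all compactifications, is exactly the valuation ring $\Oh_{\bbar X,x}^+$ furnished by Huber's construction. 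This will require (i) a sufficiently explicit grip on $\Oh_{\bbar X}^+$ at boundary points, where it is itself produced by a limit/completion procedure; (ii) the Riemann--Zariski input that such filtered colimits of local rings of proper $k$-models stabilise to valuation rings with the expected value group and residue field; and (iii) care with the $\varpi$-adic topology, since $\Oh_{\mathfrak{X}}$ is $\varpi$-adically complete whereas the compactification datum $\bbar{\mathfrak{X}}_s$ lives only over $k$ — to be handled, as in the affinoid heart of Theorem~\ref{thm:adic_as_limit} (where $A^{\circ}$ is the $\varpi$-adic completion of the integral closure of a filtered colimit of finitely generated $\Oh_K$-subalgebras), by working with the whole system of models at once, so that completion is felt only in the passage to the limit.
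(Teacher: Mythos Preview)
Your overall architecture matches the paper's: construct compatible maps $\bbar{\spmap}\colon \bbar X \to \bbar{\mathfrak{X}}_s$ via the valuative criterion, assemble them into $\varphi$, prove $\varphi$ is a homeomorphism, then compare stalks inside $j_*\Oh_X^+$. The cofilteredness argument and the reduction of the inner limit to a relative Riemann--Zariski space are also the paper's moves.

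The genuine gap is continuity of $\bbar{\spmap}$, and hence of $\varphi$. You assert that ``the map is spectral'' and that ``every quasi-compact open of $\bbar X$ is the preimage of a quasi-compact open of some $\bbar{\mathfrak{X}}_s$'', but give no argument; and even granting the second claim, it yields only that $\varphi$ is open, not that it is continuous. In the paper this is the technical heart and is \emph{not} obtained by describing the opens of $\bbar X$ directly. For affinoid $X=\Spa{A}{A^\circ}$ with model $\Spf{\sA}$, one embeds $\bbar X=\Spa{A}{A^{\min}}$ as the analytic locus of the \emph{non-analytic} adic space $\Spa{\sA}{\sA^{\min}}$ and defines $\spmap_{\RZop}$ as the retraction sending each point to its minimal \emph{horizontal} specialization, which is shown to land in $\RZ{\sA_\kappa}{\kappa}\subset\Spa{\sA_\kappa}{\kappa}$. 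Continuity, spectrality, closedness and surjectivity are then established by an explicit computation on rational subsets (Lemma~\ref{lemma:sp_rz_properties}), identified with the valuative-criterion map (Lemma~\ref{lemma:min_gen_rz}, Corollary~\ref{cor:equal_maps}), and globalized via a proper covering (Lemma~\ref{f_properties}). The point is that rational opens of $\bbar X$ involve inequalities $|f|\le|g|\ne 0$ with $g\notin A^\circ$, and relating these to Zariski conditions on $\bbar{\mathfrak{X}}_s$ is precisely what the horizontal-specialization machinery buys; nothing in your outline substitutes for it.

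Two smaller points. Your injectivity sketch for points with distinct maximal generalizations $x_0\ne y_0$ says they are already separated in some $\mathfrak{X}_s$; but the closures of $\spmap(x_0)$ and $\spmap(y_0)$ in $\bbar{\mathfrak{X}}_s$ may meet on the boundary, so specializations $x,y$ need not be separated there. The paper fixes this by blowing up the intersection locus, first in $\mathfrak{X}_s$ and then again on the boundary of the chosen compactification, to force disjoint strict transforms. On the sheaf side, your worry (iii) about $\varpi$-adic completion is a red herring: the paper simply identifies both $\Oh_{\bbar X}^+$ and $\Oh_{\ttilde X}$ as subsheaves of $j_*\Oh_X^+$ and computes each stalk at $x$ as $\pi^{-1}(R_x)\subset\Oh_{X,x_0}^+$, where $x_0$ is the minimal generalization of $x$ in $X$, $\pi\colon\Oh_{X,x_0}^+\to\kappa(x_0)$ is the quotient, and $R_x\subset\kappa(x_0)$ is the valuation ring attached to $x$; no completion enters.
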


We remark that in Huber's construction of compactifications, one adds only valuations of rank higher than $1$ (see Theorem \ref{thm:comp}).
In fact, every point $x$ of $\bbar{X}$ lies in the closure of a point $x_0$ of $X$, and hence corresponds to a valuation subring of the residue field $\kappa(x_0) = \Oh_{X, x_0}^+ / \mathfrak{m}_{x_0}^+$. Such subrings naturally form a space called the \textit{Riemann--Zariski space} of $\kappa(x_0)$.
This description is the leading idea of our main proof. 

Our considerations owe a huge intellectual debt to the ideas and the results of Temkin's paper on relative Riemann--Zariski spaces \cite{RZ}, which are summarized briefly in \S\ref{section:rz}.

A result similar to ours, but more difficult to apply in practice, can be obtained using the framework of rigid geometry developed by Fujiwara and Kato \cite{FK}, thanks to the result of Ayoub--Gallauer--Vezzani \cite{AGV}. We explain the differences between the two approaches in \S\ref{section:agv}. We also thank Alberto Vezzani for bringing this result to our attention.

We also would like to mention the work of Gaisin and Welliaveetil \cite{Gaisin} about compactifications of specialization morphisms. We discuss the relation between their theory and our article in \S \ref{section:gaisin}. We also thank Ildar Gaisin for a valuable discussion on this topic.

\subsection{Notation} \label{notation}
For a non-archimedean field $K$ (that is, a field complete with respect to a valuation of rank one) we write $\Oh_K$, $\mathfrak{m}_K$ and $\kappa$ for its ring of integers, maximal ideal of $\Oh_K$ and its residue field, respectively. If $X$ is a rigid space we write $\mathcal{M}(X)$ for the category of its admissible (i.e. flat over $\Spf{\Oh_K}$) formal models.

Let $X$ be an adic space and $x \in X$ a point, then: 
$\mathfrak{m}_x \subset \Oh_{X, x}$ and $\mathfrak{m}^+_x \subset \Oh^+_{X, x}$ are the maximal ideals in respective local rings;
$(k(x), k(x)^+) := (\Oh_{X, x} / \mathfrak{m}_x, \Oh^+_{X, x} / \mathfrak{m}_x)$ is the pair consisting of the residue field at $x$ together with a valuation ring $k(x)^+$;
$\kappa(x) = k(x)^+ / \mathfrak{m}_{k(x)^+} \simeq \Oh_{X, x}^+ / \mathfrak{m}^+_x$ is the residue field of the residue field at $x$;
if $f \in \Oh_{X, x}$ then $|f(x)|$ denotes the value of $f$ with respect to the valuation associated to $x$.  If $X=\Spa{A}{A^+}$ we often write $v_x$ for the associated valuation $v_x: A \to \Gamma_x \cup \{ 0 \}$. Here $\Gamma_x$ is called the \textit{value group} of $v_x$.

If $X$ is a scheme then its residue field at point $x \in X$ is denoted by $\kappa(x)$. For a separated scheme $X$ of finite type over some field $\kappa$ we call a $\kappa$-\textit{compactification} any schematically dense open immersion $X \to \bbar{X}$ of schemes over $\kappa$ with $\bbar{X}$ proper over $\kappa$. 

We often write qcqs for quasi-compact quasi-separated; tft for topologically finite type.

\subsection{Acknowledgements}
The author wants to express his gratitude to Piotr Achinger for all the guidance and help. The author also thanks Marcin Lara for valuable discussions as well as Katharina H\"ubner and Adrian Langer for reading and commenting on the earlier version of this manuscript. This work is a part of the project KAPIBARA supported by the funding from the European Research Council (ERC) under the European Union's Horizon 2020 research and innovation programme (grant agreement No 802787). Most of the results in this paper were contained in the author's MSc project.

\section{Preliminaries}

In this section, we provide the reader with a recap of the necessary tools that we use in our proofs and constructions. There is neither anything new nor illuminating here. Feel free to skip it upon the first read.

\subsection{Rigid geometry and the specialization map} 
Let $K$ be a nonarchimedean field. In this article we consider rigid spaces as analytic adic spaces locally of finite type over $\Spa{K}{\Oh_K}$ (see \cite{HuEt} for a reference on adic spaces) and their formal models as introduced by Raynaud (see \cite{Bosch} or \cite{FK}), which are formal schemes locally of topologically finite type over $\Spf{\Oh_K}$. As we are interested in the interplay of these two theories in terms of compactifications, let us recall a few notions and facts about this phenomenon.

We shall now recall a few definitions that will play an important role in what will follow. For other definitions, we refer the reader to the book of Huber \cite{HuEt}.

\begin{definition}
Let $f: X \to Y$ be a morphism of analytic adic spaces. Then $f$ is called \textit{locally of $^+$weakly finite type} if for every $x \in X$ there exists open affinoid subspaces $U=\Spa{A}{A^+}$, $V=\Spa{B}{B^+}$ of $X$ and $Y$ and a finite subset $E \subset A$, such that: $x \in U$, $f(U) \subset V$, the induced homomorphism $f^{\#} : (B, B^+) \to (A, A^+)$ is of topologically finite type and $A^+$ is the integral closure of $f^{\#}(B)[E \cup A^{\circ \circ}]$ in $A$.
Moreover, $f$ is called \textit{of $^+$weakly finite type} if it is in addition quasi-compact.
\end{definition}

\begin{definition}
Let $f: X \to Y$ be a morphism of adic spaces.
\begin{enumerate}[(a)]
    \item $f$ is called \textit{specializing} if for each point $x$ and each specialization $y'$ of $f(x)$ there exists a specialization $x'$ of $x$ such that $y' = f(x')$.
    \item $f$ is called \textit{universally specializing} if $f$ is locally of weakly finite type and for every adic morphism of adic spaces $Y' \to Y$ the base change $X \times_{Y} Y' \to Y'$ is specializing.
    \item $f$ is called \textit{partially proper} if it is universally specializing, separated and locally of $^+$weakly finite type,
    \item $f$ is called \textit{proper} if it is partially proper and quasi-compact.
\end{enumerate}
\end{definition}

\begin{prop}[{\cite[Proposition 1.3.10]{HuEt}}] \label{prop:adic_vcp}
Let $f: X \to Y$ be a morphism of analytic adic spaces. Then $f$ is partially proper if and only if for any non-archimedean field $L$ and any valuation ring $R \subset \Oh_L \subset L$ when given the solid commutative square
\begin{center}
    \begin{tikzcd}
    \Spa{L}{\Oh_L} \arrow[]{r}{} \arrow[]{d}{} & X \arrow[]{d}{}\\
    \Spa{L}{R} \arrow[]{r}{} \arrow[dashed]{ur}{\exists!}& Y
    \end{tikzcd}
\end{center}
then there exists exactly one dashed arrow.
\end{prop}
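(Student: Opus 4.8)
The plan is to establish the two implications separately, reducing in each case to the situation where $X = \Spa{A}{A^+}$ and $Y = \Spa{B}{B^+}$ are affinoid, which is legitimate since both partial properness and the lifting property may be tested on affinoid covers of source and target. Two elementary facts will be used repeatedly. First, for an analytic affinoid $\Spa{A}{A^+}$, a morphism from $\Spa{L}{R}$ is the same datum as a continuous homomorphism $(A, A^+) \to (L, R)$, equivalently a point $x$ together with a continuous embedding $\widehat{k(x)} \hookrightarrow L$ of complete valued fields under which $R$ pulls back to $k(x)^+$. Second, $\Spa{L}{R}$ is a finite chain for the specialization order, with dense generic point $\eta$ --- the rank-one valuation, whose valuation ring is $\Oh_L$ --- and unique closed point $s$, the valuation with valuation ring $R$; moreover the structure morphism $\Spa{L}{R} \to Y$ is automatically adic, $Y$ being analytic.

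For ``$\Rightarrow$'', assume $f$ is partially proper and consider the solid square. Let $x_\eta \in X$ denote the image of $\eta$; the bottom arrow carries $\eta \rightsquigarrow s$ to a specialization of $f(x_\eta)$ in $Y$. Base-changing $f$ along the adic morphism $\Spa{L}{R} \to Y$, which preserves partial properness, we reduce to $Y = \Spa{L}{R}$ with $f(x_\eta) = \eta$; then the top arrow is a section of $f$ over $\eta$, forcing $\widehat{k(x_\eta)} = L$. As $f$ is universally specializing it is specializing, so the specialization $\eta \rightsquigarrow s$ lifts to $x_\eta \rightsquigarrow x_s$ with $f(x_s) = s$; since $X$ is analytic and $v_{x_\eta}$ has rank one this is a vertical specialization, whence $\widehat{k(x_s)} = \widehat{k(x_\eta)} = L$. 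Now $f(x_s) = s$ identifies the valuation $v_{x_s}$ on $L$ with the one defining $s$, namely $v_R$, so that $A^+$ lands in $R$ and we obtain the dashed arrow $\Spa{L}{R} \to X$. Its uniqueness is the standard separatedness argument: two lifts become sections of the separated morphism $X \times_Y \Spa{L}{R} \to \Spa{L}{R}$ agreeing at the dense point $\eta$, hence coincide.

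For ``$\Leftarrow$'', assume the lifting property holds. Separatedness follows by applying the uniqueness clause to the two projections of a hypothetical non-separating pair of points. For the specializing conditions, given an adic morphism $g : Y' \to Y$, a point $x'$ of $X \times_Y Y'$ with image $y'$ in $Y'$, and a specialization $y''$ of $y'$, one realizes $y' \rightsquigarrow y''$ as the image of $\eta \rightsquigarrow s$ under a suitable morphism $\Spa{L}{R} \to Y'$, lifts the induced map $\Spa{L}{\Oh_L} \to X \times_Y Y'$ through $x'$, and reads off the value of the lift at $s$ as the desired specialization of $x'$; taking $Y' = Y$ shows $f$ itself is specializing. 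What remains is to deduce the structural conditions --- that $f$ is locally of $^+$weakly finite type, and (if not already a standing hypothesis) locally of weakly finite type. For this one invokes Huber's relative boundary $\partial(X/Y)$, whose emptiness is his formulation of partial properness: a point of $\partial(X/Y)$ would exhibit a point $x \in X$ over $y = f(x)$ together with a valuation ring of $\widehat{k(x)}$ admissible over $Y$ but not realized by any point of $X$ specializing $x$; packaging this into a square with $L$ a suitable completed residue field, $R$ that valuation ring, and the top arrow given by $x$, the lifting property produces the missing point, a contradiction --- so $\partial(X/Y) = \emptyset$.

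The main obstacle, and the real content of the ``$\Leftarrow$'' direction, is this last step: translating the purely valuation-theoretic lifting property into the integral-closure statement defining ``locally of $^+$weakly finite type''. The lifting property only directly constrains which valuation rings of residue fields occur among the points of $X$, whereas $^+$weak finite type is a statement about generators of $A^+$; bridging the gap is exactly Huber's study of the relative boundary.
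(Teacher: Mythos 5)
The paper does not prove this proposition; it is quoted directly from Huber's book, so there is nothing to compare your argument against. Your forward implication is essentially correct and is the only direction the paper actually uses (in Proposition~\ref{prop:rz_as_closure}): base change to $Y=\Spa{L}{R}$, note that the graph of the top arrow is a closed immersion into the $\eta$-fiber so that $\widehat{k(x_\eta)}=L$, lift $\eta\rightsquigarrow s$ to a (necessarily vertical, since $X$ is analytic) specialization $x_\eta\rightsquigarrow x_s$, identify the resulting valuation ring with $R$ from $f(x_s)=s$, and use separatedness for uniqueness.

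The converse is where the argument has a genuine gap. Separatedness from uniqueness and universally specializing from existence are fine, but no valuative criterion can yield the finiteness condition ``locally of $^+$weakly finite type,'' and the appeal to the relative boundary is misdirected: $\partial(X/Y)$ is only defined for morphisms that are \emph{already} locally of $^+$weakly finite type, and its emptiness characterizes partial properness given that hypothesis --- it is a substitute for the valuative criterion, not a route from the valuative criterion to the finiteness condition. A lifting property concerning maps out of $\Spa{L}{R}$ is simply blind to whether $A$ is topologically of finite type over $B$, so it cannot recover that property. The resolution is that Huber's Proposition~1.3.10 carries standing hypotheses on $f$ (separated, taut, locally of $^+$weakly finite type) which the paper's restatement silently drops; your parenthetical ``if not already a standing hypothesis'' identifies exactly the right concern, and the correct move is to accept those as hypotheses and restrict the equivalence to the universally specializing condition, rather than try to manufacture a finiteness property from valuations via the relative boundary.
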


Let us now recall the construction of the specialization map.
\begin{constr}[Specialization map] \label{const:spec}
Let $X$ be a rigid space and let $\mathfrak{X} \in \mathcal{M}(X)$ be a formal model of $X$. 
We shall now construct the specialization map 
\[
    \spmap: X \to \mathfrak{X}. 
\]
For this, we only deal with the case of affine $\mathfrak{X}$. The general situation can be reduced to the affine one by considering a cover of $\mathfrak{X}$ by affine opens. 

Let $\mathfrak{X} = \Spf{\mathscr{A}}$ and $X = \Spa{A}{A^\circ}$ for $A \simeq \mathscr{A} \otimes_{\Oh_K} K$. Pick any $x \in X$. Let $k(x)$ be the residue field of $\Oh_{X, x}$ and $k(x)^+ \subset k(x)$ the image of $\Oh_{X, x}^+$ under this quotient. The general fact for adic spaces is that $k(x)^+$ is a valuation ring with fraction field $k(x)$ and moreover $x$ is the image of a unique closed point under a naturally defined morphism 
\[
    \theta_x : \Spa{k(x)}{k(x)^+} \to \Spa{A}{A^\circ} = X.
\]
Let $(\what{k(x)}, \what{k(x)^+})$ be the completion of $(k(x), k(x)^+)$. Note that the completion of valuation rings does not change the residue field, so $\kappa(x)$ is the residue field of $\what{k(x)^+}$ as well. This induces an isomorphism 
\[
    \Spa{\what{k(x)}}{\what{k(x)^+}} \simeq \Spa{k(x)}{k(x)^+}.
\]
The morphism $\theta_x$ induces a natural ring homomorphism $h: A^\circ \to \what{k(x)^+}$ and hence $h|_{\mathscr{A}}: \mathscr{A} \to \what{k(x)^+}$ (as $\mathscr{A}$ is always a subring of $A^\circ$). Now, we define $\spmap(x)$ to be the image of the unique closed point of $\Spf{\what{k(x)^+}} \to \Spf{\mathscr{A}} = \mathfrak{X}$. 
\end{constr}

\begin{rmk}\label{rmk:iota}
    The construction above gives us a natural homomorphism $\iota: \kappa(\spmap(x)) \to \kappa(x)$, where $\kappa(\spmap(x))$ is the residue field of the scheme $\mathfrak{X}_s$ at $\spmap(x)$. 
\end{rmk}

\subsection{Huber's compactification of an adic space} \label{ss:huber-comp}  Here we will recall the properties of Huber's universal compactifications.
\begin{definition}
Assume we have a diagram of analytic adic spaces as below
\begin{center}
    \begin{tikzcd}
        X \arrow[hook]{r}{j} \arrow[swap]{d}{f}
        & \bbar{X} \arrow[]{dl}{f'} \\
        S
    \end{tikzcd}
\end{center}
with $f'$ being a partially proper morphism. Then $j: X \to \bbar{X}$ is called a \textit{universal compactification} of $X$ over $S$ if for each $g: Y \to S$ partially proper and each morphism of adic spaces $h : X \to Y$ over $S$ there exists a unique $h' : \bbar{X} \to Y$ such that the diagram below commutes
\begin{center}
    \begin{tikzcd}
        X \arrow[hook]{dr}{j} \arrow[swap]{dd}{f} 
        \arrow[bend left=30]{drr}{h} \\
        & \bbar{X} \arrow[]{dl}{f'} \arrow[dashed]{r}{h'}
        & Y \arrow[bend left=30]{dll}{g}\\
        S
    \end{tikzcd}
\end{center}
\end{definition}

\begin{thm}[{\cite[Theorem 5.1.5]{HuEt}}] \label{thm:comp}
Let $f: X \to S$ be a morphism of analytic adic spaces, which is separated, locally of $^+$weakly finite type and taut\footnote{Topological space $X$ is called \textit{taut} if the closure of any quasi-compact subset is quasi-compact too.}. Then there exists a universal compactification $\bbar{X}$ together with morphisms $f'$ and $j$ as above. Moreover $\bbar{X}$ together with the morphism $j: X \to \bbar{X}$ is characterized as the unique pair such that the following conditions hold:
\begin{enumerate}[(a)]
    \item $f'$ is partially proper,
    \item $j$ is a quasi-compact open embedding,
    \item each point of $\bbar{X}$ is a specialization of some point in $X$,
    \item the induced map $\Oh_{\bbar{X}} \to j_{*}\Oh_{X}$ is an isomorphism.
\end{enumerate}
\end{thm}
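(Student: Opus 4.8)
The plan is to construct a morphism of locally ringed spaces $\Phi\colon (\bbar X,\Oh^+_{\bbar X})\to\varprojlim_{(\XX,\bbar{\XX}_s)\in\mathcal M(\bbar X)}(\bbar{\XX}_s,\Oh_{(\XX,\bbar{\XX}_s)})$ and then to verify that it is a homeomorphism on underlying spaces and an isomorphism on stalks; since every ingredient is functorial in $X$, this $\Phi$ is automatically natural. The inputs are Theorem~\ref{thm:adic_as_limit} applied to $X$, the characterization of $\bbar X$ in Theorem~\ref{thm:comp} (chiefly (c) and (d)), and Temkin's relative Riemann--Zariski spaces \cite{RZ}. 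First one records that $\mathcal M(\bbar X)$ is cofiltered---it fibers over the cofiltered category $\mathcal M(X)$ with fibers the categories of $\kappa$-compactifications of the various $\XX_s$, which are cofiltered (Nagata; see also \cite{RZ})---and that all transition maps are spectral maps of spectral spaces, so the limit on the right is computed naively. To build $\Phi$, for each $(\XX,\bbar{\XX}_s)$ one extends the specialization map of Construction~\ref{const:spec} to $\bbar{\spmap}\colon\bbar X\to\bbar{\XX}_s$: a point $x\in\bbar X$ has, by Theorem~\ref{thm:comp}(c) and the discussion in the introduction, a canonical generization $x_0\in X$ and a valuation ring $V_x\subseteq\kappa(x_0)$; composing $V_x$ with $\iota$ from Remark~\ref{rmk:iota} gives a valuation ring of $\kappa(\spmap(x_0))$, and its center on the proper $\kappa$-scheme $\bbar{\XX}_s$ is $\bbar{\spmap}(x)$. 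This extends $\spmap\colon X\to\XX_s\hookrightarrow\bbar{\XX}_s$. The accompanying sheaf map $\Oh_{(\XX,\bbar{\XX}_s)}\to\bbar{\spmap}_*\Oh^+_{\bbar X}$ restricts over $X$ to $\spmap^{\#}\colon\Oh_\XX\to\spmap_*\Oh^+_X$; the point of Definition~\ref{def:main} is precisely that a section of $\Oh_{(\XX,\bbar{\XX}_s)}$, being a formal function that \emph{extends over the boundary}, has its extension regular at $\bbar{\spmap}(x)$, which forces $|f(x)|\le 1$ there, so it lands in $\Oh^+_{\bbar X}$ (that it defines a section of $\Oh_{\bbar X}$ at all is Theorem~\ref{thm:comp}(d)). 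These maps are compatible over $\mathcal M(\bbar X)$ and assemble into $\Phi$.

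\textbf{$\Phi$ is a homeomorphism.} Here one factors the limit. A morphism of $\mathcal M(\bbar X)$ covers a morphism $\XX'\to\XX$ of formal models and a compatible morphism $\bbar{\XX}'_s\to\bbar{\XX}_s$ of compactifications, so $\varprojlim_{(\XX,\bbar{\XX}_s)}|\bbar{\XX}_s|=\varprojlim_{\XX\in\mathcal M(X)}(\varprojlim_{\bbar{\XX}_s}|\bbar{\XX}_s|)$. For fixed $\XX$ the inner limit is, by Zariski's theorem in Temkin's formulation \cite{RZ}, the relative Riemann--Zariski space $\RZ{\XX_s}{\kappa}$ attached to $\XX_s$ and its $\kappa$-compactifications: it contains $|\XX_s|$ as an open subspace, the remaining points being the valuations ``at infinity''. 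Passing to the outer limit and using $|\XX|=|\XX_s|$ together with $\varprojlim_\XX|\XX|=|X|$ from Theorem~\ref{thm:adic_as_limit}, one identifies $\varprojlim_{(\XX,\bbar{\XX}_s)}|\bbar{\XX}_s|$ with the relative Riemann--Zariski space of $X$ over $S$: a compatible system unwinds to a point $x_0\in X$ together with a valuation ring of $\kappa(x_0)/\kappa$, which is exactly a point of $\bbar X$ over $x_0$ by Theorem~\ref{thm:comp}(c). Surjectivity of $\Phi$ uses Theorem~\ref{thm:comp}(c) plus enough compactifications and modifications of the $\XX_s$ (Nagata, \cite{RZ}) to realize every valuation at infinity; injectivity and openness of $\Phi$---hence that it is a homeomorphism and not merely a continuous bijection---rest on Temkin's comparison of the relative Riemann--Zariski spaces built from formal models with the one internal to adic spaces, together with quasi-compactness of the transition maps.

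\textbf{$\Phi$ is an isomorphism on sheaves.} Check on stalks. Over the open $X\subseteq\bbar X$ one has $\Oh^+_{\bbar X}|_X=\Oh^+_X$, while restricting the limit sheaf to the open subspace $\varprojlim_\XX|\XX_s|\cong|X|$ kills the extendability condition (it is vacuous on opens of $\XX_s$), leaving $\varprojlim_\XX\Oh_\XX=\Oh^+_X$ by Theorem~\ref{thm:adic_as_limit}. At a boundary point $x$ with generization $x_0\in X$, Huber's construction \cite{HuEt} gives $\Oh^+_{\bbar X,x}=\{f\in\Oh_{X,x_0}:|f(x)|\le 1\}$, i.e.\ the preimage of the valuation ring $V_x\subseteq\kappa(x_0)$ under $\Oh^+_{X,x_0}\to\kappa(x_0)$ (after localizing at $x_0$). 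On the other side, the stalk of $\Oh_{(\XX,\bbar{\XX}_s)}$ at $\bbar{\spmap}(x)$, colimited over $\XX$, is a valuation ring by Temkin's local description of relative Riemann--Zariski spaces, and the condition of extending to $\bbar{\XX}_s$ cuts out of $\varinjlim_\XX\Oh_{\XX,\spmap(x_0)}=\Oh^+_{X,x_0}$ precisely the functions whose image in $\kappa(x_0)$ lies in $V_x$---the same ring. Hence $\Phi$ is stalkwise an isomorphism of valuation rings, so an isomorphism of locally ringed spaces.

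\textbf{Main obstacle.} The crux is the middle step: identifying the underlying space of the iterated limit with $|\bbar X|$ and, in particular, upgrading the continuous bijection to a homeomorphism. This needs (i) a resolution/compactification input so that each valuation at infinity in some $\RZ{\XX_s}{\kappa}$ is refined compatibly and thus genuinely defines a point of $\bbar X$, and (ii) a clean compatibility statement---essentially Temkin's in \cite{RZ}---between relative Riemann--Zariski spaces in the formal/schematic world and the one built inside adic spaces, which is what transports ``$\varprojlim$ of formal models $=$ rigid space'' across the open immersion $j\colon X\hookrightarrow\bbar X$. A minor technical burden is the bookkeeping around Definition~\ref{def:main} (that $\Oh_{(\XX,\bbar{\XX}_s)}$ is a sheaf, that the transition maps are the evident restrictions, and that the limit commutes with sections over quasi-compact opens). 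Granting the homeomorphism and the stalk computation, naturality of $\Phi$ in $X$ is then formal.
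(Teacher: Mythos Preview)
Your proposal does not address the stated theorem. Theorem~\ref{thm:comp} is Huber's existence and characterization result for universal compactifications, quoted from \cite[Theorem~5.1.5]{HuEt}; the paper does not prove it but uses it as a black box. What you have written is instead a sketch of the \emph{main theorem} of the paper, Theorem~\ref{thm:main}: the identification of $(\bbar X,\Oh^+_{\bbar X})$ with the inverse limit over $\mathcal M(\bbar X)$. Indeed, your argument repeatedly invokes Theorem~\ref{thm:comp} as an input (``by Theorem~\ref{thm:comp}(c)'', ``Theorem~\ref{thm:comp}(d)''), so it cannot possibly be a proof of that statement.

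If your sketch is read as an attempt at Theorem~\ref{thm:main}, it follows the paper's overall strategy---construct $\bbar\spmap$ via centers of valuations, factor through Riemann--Zariski spaces, and compare stalks---but it glosses over exactly the step the paper works hardest on. Saying that ``injectivity and openness of $\Phi$ \dots\ rest on Temkin's comparison'' is not a proof: the paper needs the detailed local analysis of Lemma~\ref{lemma:sp_rz_properties} (continuity, spectrality, closedness, and surjectivity of $\spmap_{\RZop}$, argued via horizontal specializations inside $\Spa{\mathscr A}{\mathscr A^{\min}}$), the globalization in Lemma~\ref{f_properties}, and the separate bijectivity argument of Lemma~\ref{l:rz} and its Corollary, which uses blow-ups to separate closures of distinct maximal points. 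None of this drops out formally from \cite{RZ}, because $\varphi$ compares an adic compactification with a limit of \emph{schematic} compactifications of special fibers, and that bridge must be built by hand. Your stalk computation at boundary points is essentially the content of Proposition~\ref{prop:iso_sheaves} and is along the right lines.
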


\begin{rmk}
In the Theorem above it is important that actually, conditions \textit{(a)-(d)} imply that $\bbar{X}$ is a universal compactification of $X$. This is how the original proof of Huber is structured -- one first proves that it suffices to construct $\bbar{X}$ such that \textit{(a)-(d)} hold and then one explicitly builds such an object.

In our proof, we will also use the fact that $\bbar{X}$ is characterized by \textit{(a)-(d)} and we will not use the universal property of $\bbar{X}$.
\end{rmk}

The simplest case, where one can explicitly describe Huber compactification is the case of a morphism of affinoids.
 
\begin{prop}[Compactifications of affinoids]\label{prop:compact_affinoid}
Let $A$ be an affinoid $K$-algebra and $X = \Spa{A}{A^\circ}$ the corresponding affinoid rigid space. Let $A^{\min}$ be the minimal integrally closed subring of $A$ that contains $K$ and all the topologically nilpotent elements $A^{\circ\circ} \subset A$. Then $\bbar{X} = \Spa{A}{A^{\min}}$ is the universal compactification of $X$.
\end{prop}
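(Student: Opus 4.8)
The plan is to verify that $j\colon X = \Spa{A}{A^\circ} \hookrightarrow \bbar{X} := \Spa{A}{A^{\min}}$ satisfies conditions \textit{(a)--(d)} of Theorem~\ref{thm:comp}; since the structure morphism $X \to \Spa{K}{\Oh_K}$ is separated (affinoids are), taut (it is quasi-compact) and locally of $^+$weakly finite type (by the finiteness theorem recalled below), Theorem~\ref{thm:comp} and the remark following it will then identify $\bbar X$ with the universal compactification. Before anything else I would record that $A^{\min}$ is a ring of integral elements of $A$: it is integrally closed by construction, it is open because it contains $A^{\circ\circ} \supseteq \varpi A^\circ$ for a pseudo-uniformizer $\varpi$, and it is contained in $A^\circ$ because $A^\circ$ is integrally closed and contains both $\Oh_K$ and $A^{\circ\circ}$. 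As $A$ is topologically of finite type over $K$, hence strongly Noetherian, $\bbar X$ is then a genuine adic space, and the inclusion $A^{\min} \subseteq A^\circ$ induces $j$, which is the identity on the Tate ring $A$.

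For \textit{(a)}, that $f'\colon\bbar X \to \Spa{K}{\Oh_K}$ is partially proper, I would apply the valuative criterion of Proposition~\ref{prop:adic_vcp}. Local $^+$weak finiteness holds with $E = \emptyset$ because $A^{\min}$ is by construction integral over $\Oh_K[A^{\circ\circ}]$, and separatedness is automatic, so one is left with the lifting property. In a square as in Proposition~\ref{prop:adic_vcp}, with non-archimedean field $L$ and valuation ring $R \subseteq \Oh_L$, the upper arrow is a continuous $K$-homomorphism $\psi\colon A \to L$ with $\psi(A^{\min}) \subseteq \Oh_L$; since $j$ and the left-hand map are the identity on rings, the unique candidate for the diagonal is $\psi$ itself, and the point is to prove $\psi(A^{\min}) \subseteq R$. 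Given $a \in A^{\min}$, an integral equation for $a$ over $\Oh_K[A^{\circ\circ}]$ becomes, after applying $\psi$, an integral equation for $\psi(a)$ over the subring of $L$ generated by $\psi(\Oh_K)$ and $\psi(A^{\circ\circ})$; but $\psi(\Oh_K) \subseteq R$ by compatibility with the bottom map, and $\psi(A^{\circ\circ}) \subseteq \mathfrak m_L \subseteq R$ by continuity of $\psi$ together with $\mathfrak m_L \subseteq R$ (which holds since $R \subseteq \Oh_L$). Hence $\psi(a)$ is integral over the valuation ring $R$ and thus lies in $R$.

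The remaining two substantive conditions both rest on the classical finiteness theorem that the reduction $\tilde A := A^\circ/A^{\circ\circ}$ is a finitely generated $\kappa$-algebra. Lifting a finite generating set to elements $e_1, \dots, e_n \in A^\circ$, reduction modulo $A^{\circ\circ}$ yields $A^\circ = A^{\min}[e_1, \dots, e_n]$. For \textit{(b)}, this shows the rational subset $\bbar X(e_1, \dots, e_n/1)$ of $\bbar X$ is, as an adic space, exactly $X$ — its ring of integral elements is the integral closure of $A^{\min}[e_i]$, namely $A^\circ$ — so $j$ is a quasi-compact open embedding onto a rational subset. For \textit{(c)}, given $v \in \bbar X$, I would assume $v(e_i) > 1$ for some $i$ (otherwise $v$ already lies in $X$) and let $H \subseteq \Gamma_v$ be the convex subgroup generated by those $v(e_i)$ that exceed $1$. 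Since each $e_i$ is power-bounded, $\varpi\prod_i e_i^{m_i}$ is topologically nilpotent for all exponents $m_i \ge 0$, so continuity of $v$ gives $v(\varpi)\prod_i v(e_i)^{m_i} < 1$; hence $v(\varpi) \notin H$, so $H$ is a proper convex subgroup and $w := v/H$ is a well-defined continuous valuation. By construction $w(e_i) \le 1$ for all $i$ and $w \le 1$ on $A^{\min}$, hence (as $A^\circ = A^{\min}[e_i]$) $w \in X$, while $v$ is a vertical specialization of $w$. Condition \textit{(d)} is then a routine computation on the basis of rational subsets: for $U = \bbar X(f_1, \dots, f_m/g)$ one has $j\inv(U) = X(f_1, \dots, f_m/g)$, and both $\Oh_{\bbar X}(U)$ and $(j_*\Oh_X)(U)$ equal the rational localization $A\langle f_i/g\rangle$, compatibly with restrictions.

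I expect condition \textit{(c)} to be the main obstacle: it is the only place where one genuinely manipulates the value group $\Gamma_v$, and the key inequality $v(\varpi)\prod_i v(e_i)^{m_i} < 1$ — which forces the relevant convex subgroup to be proper and hence produces a point of $X$ specializing to the given point of $\bbar X$ — is the crux of the argument. Both \textit{(b)} and \textit{(c)} also depend essentially on the finiteness of $A^\circ$ over $A^{\min}$ (equivalently, finite generation of the reduced affinoid algebra), which is the one genuinely non-formal external input; conditions \textit{(a)} and \textit{(d)} are comparatively routine.
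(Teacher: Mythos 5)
The paper states this proposition without proof, citing it as a known fact (it is essentially the affinoid case of Huber's construction in \cite{HuEt}). Your argument is correct and is the standard one: verify conditions \textit{(a)}--\textit{(d)} of Theorem~\ref{thm:comp} directly. The valuative criterion argument for \textit{(a)} is fine (integrality is preserved under $\psi$, and $R$ is integrally closed in $L$); the identification $A^\circ = A^{\min}[e_1,\dots,e_n]$ underlying \textit{(b)} and \textit{(c)} is correct (lift a $\kappa$-polynomial expression for $\bar a$ to $\Oh_K$-coefficients and absorb the error into $A^{\circ\circ}\subset A^{\min}$); and the convex subgroup argument in \textit{(c)}, with the inequality $v(\varpi)\prod_i v(e_i)^{m_i}<1$ forcing $v(\varpi)\notin H$, is the right way to produce a vertical generalization landing in $X$. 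One remark worth making explicit: as stated, the proposition says $A^{\min}$ contains $K$, which must be a typo for $\Oh_K$ — if $K$ is nontrivially valued, a subring of $A$ containing $K$ cannot lie inside $A^\circ$, so $(A,A^{\min})$ would not be a Huber pair. You silently (and correctly) take $A^{\min}$ to be the integral closure of $\Oh_K[A^{\circ\circ}]$ in $A$, both in the preliminary observation $A^{\min}\subseteq A^\circ$ and in the integral equation used for \textit{(a)}; that correction should be recorded.
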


\subsection{Spectral spaces}
We shall now recall the properties of the topology of spectral spaces.
\begin{definition}
A topological space $X$ is called \textit{spectral} if it is quasi-compact and quasi-separated, each irreducible closed subset has a generic point and $X$ has a basis consisting of quasi-compact open subsets.
\end{definition}

\begin{prop}[{\cite[Corollary III.2.4]{Morel}}]
Quasi-compact quasi-separated adic spaces are spectral.
\end{prop}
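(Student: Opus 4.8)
The plan is to reduce the assertion to the affinoid case and then glue; the only genuinely hard input is the spectrality of an affinoid adic space, which I would import rather than reprove. So first I would recall (from Huber, cf.\ \cite{HuEt}) that for any Huber pair $(A,A^+)$ the underlying topological space of $\Spa{A}{A^+}$ is spectral. The reason this holds: one realizes it as a subspace of the valuation spectrum $\Spv(A)$, which is spectral by a direct construction (the rational subsets form a basis of quasi-compact opens, the constructible topology is quasi-compact Hausdorff, and a Zorn-type argument on valuation rings produces generic points of irreducible closed sets), and then identifies $\Spa{A}{A^+}$ with a quasi-compact open of $\Spv(A)$ cut out by the conditions $|f|\le 1$ for $f\in A^+$ together with continuity. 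This is the step I would simply cite.

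Next I would record the elementary fact that a quasi-compact open subspace $V$ of a spectral space $X$ is again spectral: $V$ is quasi-compact by hypothesis; the quasi-compact opens of $X$ contained in $V$ form a basis of $V$ closed under finite intersection, giving quasi-separatedness together with a basis of quasi-compact opens; and if $Z\subseteq V$ is irreducible closed, its closure $\overline{Z}$ in $X$ has a unique generic point $\eta$, which lies in $V$ because $V$ is open, hence stable under generization, while $\overline{Z}\cap V=Z$ is nonempty, so $\eta$ is the generic point of $Z$ in $V$. Now let $X$ be a quasi-compact quasi-separated adic space. By quasi-compactness it admits a finite cover by affinoid open subspaces $U_1,\dots,U_n$, each spectral by the first step, and by quasi-separatedness each intersection $U_i\cap U_j$ is a quasi-compact open of the spectral space $U_i$, hence spectral.

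It then remains to invoke the standard gluing lemma: a topological space admitting a finite open cover by spectral subspaces whose pairwise intersections are quasi-compact is spectral. One checks the axioms directly: $X$ is quasi-compact as a finite union of quasi-compact sets; it is $T_0$ since each $U_i$ is and two points lying in no common $U_i$ are separated by one of the $U_i$; every quasi-compact open of $X$ is a finite union of quasi-compact opens each contained in some $U_i$, which yields a basis of quasi-compact opens and, after reducing intersections to the $U_i$ and using their quasi-separatedness, the quasi-separatedness of $X$; and for $Z\subseteq X$ irreducible closed one picks $i$ with $Z\cap U_i\neq\varnothing$, takes the generic point $\eta\in U_i$ of the irreducible closed set $Z\cap U_i$, and observes $\overline{\{\eta\}}^{X}=\overline{Z\cap U_i}^{X}=Z$ because $Z\cap U_i$ is dense in $Z$, with uniqueness of $\eta$ following from $T_0$-ness. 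The main obstacle is the affinoid case, which rests on Huber's theory of continuous valuations; everything after that is routine point-set topology, the only points demanding a little care being the verification of quasi-separatedness and of the existence of generic points after gluing.
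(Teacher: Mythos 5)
The paper gives no proof of this statement; it simply cites it from Morel's notes. Your argument is the standard one and is correct in outline and in detail, with one small exception in the part you explicitly say you are only sketching: $\Spa{A}{A^+}$ is \emph{not} a quasi-compact \emph{open} subspace of $\Spv(A)$. The conditions $|f|\le 1$ for $f\in A^+$ cut out a pro-constructible subset closed under specialization, and the continuity condition involves passing to $\Spv(A,I\cdot A)$ for an ideal of definition $I$ together with a retraction; Huber's proof of spectrality of affinoids is correspondingly more delicate than ``quasi-compact open of a spectral space.'' Since you declare you would import that result rather than reprove it, this imprecision does not affect the soundness of your proof, but the parenthetical explanation of \emph{why} affinoids are spectral is not accurate as stated.

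The gluing step is fine, though it is worth noting one point of care you elided: when you write a quasi-compact open $V\subseteq X$ as a finite union of quasi-compact opens each contained in some $U_i$, you should not obtain these as $V\cap U_i$ (whose quasi-compactness would presuppose the quasi-separatedness of $X$ you are trying to prove); rather, cover $V$ by basic quasi-compact opens of the $U_i$ and extract a finite subcover using quasi-compactness of $V$. With that reading, the verification of quasi-separatedness goes through exactly as you indicate: each pairwise intersection of the pieces lives inside some $U_i\cap U_j$, which is a quasi-compact open of the spectral space $U_i$ and hence itself spectral, so the intersection is quasi-compact. The remaining axioms ($T_0$, quasi-compactness, existence of generic points of irreducible closed subsets) are checked correctly.
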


\begin{definition}
Let $X$ be a spectral space. Then $U \subset X$ is called \textit{constructible} if it is a finite Boolean combination of quasi-compact open subsets and their complements. The \textit{constructible topology} is the topology generated by constructible sets. A subset $Z \subset X$ is called \textit{pro-constructible} if it is closed in the constructible topology. A continuous map $f: X \to Y$ of spectral spaces is called \textit{spectral} if it is quasi-compact (i.e. preimage of quasi-compact open is quasi-compact).
\end{definition}

\begin{prop}[{\cite[Tag 08YF]{Stacks}}] \label{prop:spectral}
Let $f: X \to Y$ be a continuous map of spectral spaces. Then:
\begin{enumerate}[(a)]
\item $X$ with constructible topology is compact Hausdorff,
\item $f$ is spectral if and only if $f$ is continuous in the constructible topology,
\item if $f$ is spectral then $f$ takes pro-constructible sets to pro-constructible sets.
\item if $Z \subset X$ is a pro-constructible subset, then its closure $\bbar{Z}$ consists of all specializations of points in $Z$.
\end{enumerate}
\end{prop}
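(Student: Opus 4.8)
The plan is to take part \textit{(a)} as the crux and to deduce \textit{(b)}--\textit{(d)} from it by soft point-set arguments, using throughout two standing facts: the constructible topology refines the given one (quasi-compact opens being a basis), and the constructible sets form a basis of clopens for it. Write $X_{\mathrm{cons}}$ for the set $X$ equipped with the constructible topology.

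For \textit{(a)}, the Hausdorff (indeed zero-dimensional) property is immediate: given $x \ne y$, since $X$ is $T_0$ with a basis of quasi-compact opens, some quasi-compact open $U$ contains exactly one of them, and $U$, $X \setminus U$ are disjoint clopen sets of $X_{\mathrm{cons}}$ separating $x$ and $y$. The real content is that $X_{\mathrm{cons}}$ is quasi-compact, and here I would reduce to a concrete model before computing. One option is Hochster's theorem: write $X \simeq \Spec{R}$, so that the constructible topology is generated by the clopen sets $D(f) \cap V(g_1) \cap \dots \cap V(g_n)$; an equivalent option is to present $X$ as an inverse limit of finite $T_0$-spaces along spectral maps, whose constructible topologies are discrete, so that $X_{\mathrm{cons}}$ becomes an inverse limit of finite discrete spaces and is visibly profinite. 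In the $\Spec{R}$ picture one concludes with Alexander's subbasis lemma: if quasi-compact opens $D(f_i)$ $(i \in I)$ and closed sets $V(g_j)$ $(j \in J)$ have empty total intersection, then no prime avoids all $f_i$ while containing all $g_j$, so a product of finitely many $f_i$ lies in the ideal generated by finitely many $g_j$, whence that finite subfamily already has empty intersection; this is exactly the finite-subcover condition for the standard subbasis of $X_{\mathrm{cons}}$. I expect this to be the main obstacle: quasi-compactness of $X_{\mathrm{cons}}$ genuinely uses that $X$ is a spectral space, not merely that its quasi-compact opens form a bounded distributive lattice, so one cannot avoid invoking Hochster's structure theorem (or redoing comparable work).

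Granting \textit{(a)}, part \textit{(b)} is a subbasis check. If $f$ is spectral, then for $V \subseteq Y$ quasi-compact open, $f^{-1}(V)$ is a quasi-compact open of $X$, hence constructible, hence clopen in $X_{\mathrm{cons}}$, and likewise for $f^{-1}(Y \setminus V) = X \setminus f^{-1}(V)$; since such sets form a subbasis of $Y_{\mathrm{cons}}$, $f$ is continuous for the constructible topologies. Conversely, if $f \colon X_{\mathrm{cons}} \to Y_{\mathrm{cons}}$ is continuous, then for $V$ quasi-compact open in $Y$ the set $f^{-1}(V)$ is open in $X$ (as $f$ is continuous for the given topologies) and closed in $X_{\mathrm{cons}}$; by \textit{(a)} it is therefore quasi-compact in $X_{\mathrm{cons}}$, and the continuous identity map $X_{\mathrm{cons}} \to X$ carries quasi-compactness back to $X$, so $f$ is spectral. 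Part \textit{(c)} is then formal: a pro-constructible $Z \subseteq X$ is closed in the quasi-compact space $X_{\mathrm{cons}}$, hence quasi-compact there, so $f(Z)$ is quasi-compact in $Y_{\mathrm{cons}}$ by \textit{(b)}, hence closed in the Hausdorff space $Y_{\mathrm{cons}}$, i.e.\ pro-constructible.

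For \textit{(d)}, one inclusion is trivial: for $z \in Z$ one has $\overline{\{z\}} \subseteq \overline{Z}$. For the other, fix $x \in \overline{Z}$ and set $G_x = \bigcap U$, the intersection over all quasi-compact opens $U \ni x$; since these form a neighbourhood basis at $x$, $G_x$ is exactly the set of generizations of $x$, and it is pro-constructible as an intersection of constructibles. It suffices to show $Z \cap G_x \ne \emptyset$, since any point of this intersection is a generization of $x$ lying in $Z$ and thus witnesses that $x$ is a specialization of a point of $Z$. If instead $Z \cap G_x = \emptyset$, then $Z$ together with the sets $U \ni x$ (quasi-compact open), all closed in $X_{\mathrm{cons}}$, has empty total intersection, so by quasi-compactness of $X_{\mathrm{cons}}$ (part \textit{(a)}) already $Z \cap U_1 \cap \dots \cap U_n = \emptyset$ for finitely many of them; as $X$ is quasi-separated, $U_1 \cap \dots \cap U_n$ is a quasi-compact open neighbourhood of $x$ disjoint from $Z$, contradicting $x \in \overline{Z}$.
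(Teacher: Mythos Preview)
Your argument is correct in all four parts. The paper itself gives no proof of this proposition; it simply cites \cite[Tag 08YF]{Stacks} and moves on, so there is no ``paper's own proof'' to compare against beyond the Stacks reference. Your write-up is a clean self-contained account of that material: the reduction of \textit{(a)} to Hochster (or to a profinite presentation) plus Alexander's subbasis lemma is standard, and your deductions of \textit{(b)}--\textit{(d)} from compactness of $X_{\mathrm{cons}}$ are the usual ones. One minor remark on \textit{(d)}: you do not actually need quasi-separatedness to finish---it suffices that $U_1 \cap \dots \cap U_n$ is an open neighbourhood of $x$ disjoint from $Z$, quasi-compactness of this intersection plays no role in the contradiction---but invoking it does no harm.
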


\begin{prop} \label{p:spectral_limit}
Let $(X_i)_{i \in \mathcal{I}}$ be a cofiltering system of spectral spaces with spectral transition maps. Then:
\begin{enumerate}[(a)]
\item \(X = \lim\limits_{\substack{\longleftarrow \\ i \in \mathcal{I}}} X_i \) is spectral,
\item for any constructible $U \subset X$ there exists $j \in \mathcal{I}$, such that $U = \pr_j\inv(V)$, where $V \subset X_j$ is constructible. If $U$ is open (resp. closed) we can choose $V$ to be open (resp. closed),
\item for any pro-constructible set $Z \subset X$ we have $Z = \bigcap\limits_{i \in \mathcal{I}} \pr_i\inv(\pr(Z))$. 
\end{enumerate}
\end{prop}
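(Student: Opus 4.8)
### Proof strategy for Proposition \ref{p:spectral_limit}

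The plan is to treat the three assertions in order, using that a cofiltered limit of compact Hausdorff spaces is compact Hausdorff, and transporting this through the constructible topology. Throughout I write $\pr_i\colon X \to X_i$ for the projections and $\pr_{ij}\colon X_j \to X_i$ for the transition maps (which exist for $i \to j$ in $\II$, after fixing a convention on the direction of the arrows).

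\textbf{Part (a).} First I would put the constructible topology on each $X_i$; by Proposition \ref{prop:spectral}(a) each $(X_i)_{\mathrm{cons}}$ is compact Hausdorff, and by Proposition \ref{prop:spectral}(b) the transition maps are continuous for these topologies. Hence the inverse limit $Y := \varprojlim (X_i)_{\mathrm{cons}}$ is compact Hausdorff (a closed subspace of the product $\prod_i (X_i)_{\mathrm{cons}}$, which is compact Hausdorff by Tychonoff). The underlying set of $Y$ is the underlying set of $X$. Now I would check directly that $X$ is spectral: quasi-compactness of $X$ follows since the identity $Y \to X$ is continuous (the original topology on each $X_i$ is coarser than the constructible one, so the limit topology on $X$ is coarser than that on $Y$) and $Y$ is quasi-compact; quasi-separatedness and the existence of a basis of quasi-compact opens follow because the sets $\pr_i\inv(U_i)$, with $U_i \subset X_i$ quasi-compact open, form such a basis, and finite intersections of these are again of this form after passing to a common index (using cofilteredness); finally, sobriety—existence of generic points for irreducible closed sets—I would deduce from the fact that $X$ with the original topology is quasi-compact, $T_0$, and has a basis of quasi-compact opens stable under finite intersection, so that Hochster's characterization applies once we know (a weak form of) part (b). Alternatively, and more cleanly, I would cite that such a limit is spectral because $X \cong \varprojlim X_i$ can be identified with $\varprojlim \Spec R_i = \Spec(\varinjlim R_i)$ for suitable rings $R_i$ via Hochster duality, but the hands-on argument above is self-contained.

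\textbf{Part (b).} Let $U \subset X$ be constructible, i.e.\ open and closed in $(X)_{\mathrm{cons}} = Y$. Since $Y$ is a cofiltered limit of the compact Hausdorff spaces $(X_i)_{\mathrm{cons}}$ with surjective-onto-their-images transition maps, a standard compactness argument shows every clopen subset of $Y$ is the preimage of a clopen subset of some $(X_j)_{\mathrm{cons}}$: cover $U$ and its complement by basic clopen cylinders $\pr_i\inv(W)$, extract a finite subcover by quasi-compactness of $Y$, and amalgamate the finitely many indices into a single $j \in \II$ using cofilteredness; the resulting clopen $V \subset (X_j)_{\mathrm{cons}}$ is exactly a constructible subset of $X_j$ and $U = \pr_j\inv(V)$. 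For the refinement: if moreover $U$ is open in the original topology of $X$, I would use part (a) of the present proposition to write $U$ as a union of basic quasi-compact opens $\pr_i\inv(U_i)$, reduce to finitely many by quasi-compactness of $U$, pass to a common index $j$, and take $V$ to be the (quasi-compact, open) union of the pullbacks of these $U_i$ to $X_j$; then $\pr_j\inv(V) = U$. Shrinking $j$ if necessary, this $V$ can be arranged to also be constructible, so it is simultaneously open and constructible. The closed case follows by taking complements.

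\textbf{Part (c).} Let $Z \subset X$ be pro-constructible, i.e.\ closed in $Y$. The inclusion $Z \subset \bigcap_i \pr_i\inv(\pr_i(Z))$ is trivial. For the reverse, suppose $x \notin Z$; since $Z$ is closed in the compact Hausdorff space $Y$ and $\{x\}$ is closed, there is a clopen $U \subset Y$ with $x \in U$ and $U \cap Z = \emptyset$. By part (b), $U = \pr_j\inv(V)$ for some constructible $V \subset X_j$; then $\pr_j(x) \in V$ while $\pr_j(Z) \cap V = \emptyset$ (because $\pr_j\inv(V) \cap Z = U \cap Z = \emptyset$ and $Z$ maps into $\pr_j(Z)$), so $\pr_j(x) \notin \pr_j(Z)$, i.e.\ $x \notin \pr_j\inv(\pr_j(Z))$. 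Hence $x \notin \bigcap_i \pr_i\inv(\pr_i(Z))$, proving equality. (One should also note $\pr_i(Z)$ is pro-constructible in $X_i$ by Proposition \ref{prop:spectral}(c), so each $\pr_i\inv(\pr_i(Z))$ is pro-constructible, consistent with $Z$ being so.)

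The main obstacle I anticipate is being careful in part (b) about the passage from "clopen in the limit of compact Hausdorff spaces" to "pulled back from a finite stage": this requires the transition maps to behave well and uses cofilteredness in an essential way to merge indices, and one must separately track the original topology to get the open/closed refinement rather than merely the constructible statement. Parts (a) and (c) are then comparatively formal consequences.
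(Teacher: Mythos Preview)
Your proposal is correct and, for part \textit{(c)} --- the only part the paper actually proves --- takes essentially the same approach: separate $x \notin Z$ from $Z$ by a constructible set, descend it to some finite level $j$ via part \textit{(b)}, and conclude that $\pr_j(x) \notin \pr_j(Z)$. The paper phrases this as a contradiction (assuming $x$ lies in the intersection and $x \notin Z$) while you argue the contrapositive directly, but the content is identical.

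For parts \textit{(a)} and \textit{(b)} the paper simply cites \cite[Tag 0A2U]{Stacks}, whereas you sketch the standard proofs via the constructible topology and Tychonoff. Your sketches are fine; two small remarks. First, in \textit{(c)} you invoke the existence of a clopen $U \subset Y$ separating $x$ from $Z$: this needs $Y$ to be profinite (not merely compact Hausdorff), which holds because each $(X_i)_{\mathrm{cons}}$ already has a basis of clopens --- worth saying explicitly. Second, your treatment of sobriety in \textit{(a)} is the one genuinely hand-wavy step: appealing to ``Hochster's characterization once we know a weak form of \textit{(b)}'' is close to circular, and the Hochster-duality shortcut, while valid, imports a nontrivial theorem. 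The clean direct argument (as in the Stacks reference) shows that for an irreducible closed $Z \subset X$ the images $\overline{\pr_i(Z)}$ are irreducible with generic points $\eta_i$, and the compatible system $(\eta_i)$ gives the generic point of $Z$.
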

\begin{proof}
Claims \textit{(a)} and \textit{(b)} are proved in {\cite[Tag 0A2U]{Stacks}}, but \textit{(c)} is not, so let us prove it. It is obvious that $Z \subset \bigcap\limits_{i \in \mathcal{I}} \pr_i\inv(\pr(Z))$, so we shall prove the other inclusion. Suppose that $x \in RHS$ and $x \not\in Z$. Then there exists a constructible subset $U \subset X$ such that $x \in U$ and $U \cap Z = \emptyset$. Moreover there exists $i_0 \in \mathcal{I}$ together with a constructible subset $V \subset X_{i_0}$ such that $U = \pr_{i_0}\inv (V)$. But since $x \in \pr_{i_0}\inv(\pr_{i_0}(Z))$ then $\pr_{i_0}(x) \in V \cap \pr_{i_0}(Z)$. So there exists $y \in U$ such that $\pr_{i_0}(y) = \pr_{i_0}(z)$ and then $y \in U$. Thus $y \in U \cap Z$, contradiction.
\end{proof}

\subsection{Specialization relations in adic spaces}
In this subsection we recall the theory of vertical and horizontal specializations in the theory of adic spaces. Later on we will work with adic spaces associated to tft $\Oh_K$-algebras, which are not analytic. Thus here we consider the general setting of arbitrary adic spaces.

\subsubsection{Vertical specializations}
Let $A$ be a ring and let $x, y \in \Spv{A}$. Then $y$ is called a \textit{vertical specialization} of $x$ if $y$ is a specialization of $x$ and $\supp(x) = \supp(y)$. Here, for a valuation $v:A \to \Gamma_v \cup \{ 0 \}$, the \textit{support} of $v$ is $\supp(v) = v\inv(0)$.

Morally, vertical specializations are the ones that change the center of a valuation and fix the support. The following two propositions are crucial in understanding their behavior. 

\begin{prop}[{\cite[Proposition I.3.2.3]{Morel}}] \label{p:v_totally_orderd}
Let $x \in \Spv{A}$. Then vertical generalizations of $x$ are totally ordered.
\end{prop}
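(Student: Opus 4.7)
The plan is to reduce the claim to the classical fact that the valuation overrings of a valuation ring $R$ inside its field of fractions form a chain indexed by $\AGSpec R$.

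First, I would fix $\mathfrak{p} := \supp(x)$ and observe that, by definition, every vertical generalization $y$ of $x$ satisfies $\supp(y) = \mathfrak{p}$. Hence every such $y$ factors through $A \twoheadrightarrow A/\mathfrak{p} \hookrightarrow K := \fracfield(A/\mathfrak{p})$, and is therefore determined by a valuation subring $R_y \subset K$ pulled back to $A$. Writing $R := R_x$, the next step is to pin down which valuation subrings arise: unpacking the definition, together with the standard fact that vertical specializations of a valuation correspond exactly to localizing its valuation ring at a prime, I would conclude that a valuation $y$ on $K$ is a vertical generalization of $x$ in $\Spv{A}$ if and only if $R_y \supset R$ as subrings of $K$, and that in this case $y_1$ is a vertical specialization of $y_2$ precisely when $R_{y_1} \subset R_{y_2}$ (intuitively, a more generic valuation has a larger valuation ring).

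Once this dictionary is established, I would invoke the classical structure theorem for overrings of a valuation ring: every overring $R \subset R' \subset K$ is of the form $R_\mathfrak{q}$ for a unique prime $\mathfrak{q} \in \AGSpec R$, and the assignment $\mathfrak{q} \mapsto R_\mathfrak{q}$ is an inclusion-reversing bijection. Since the spectrum of a valuation ring is totally ordered by inclusion, the set of overrings of $R$ in $K$ is totally ordered as well, and hence the vertical generalizations of $x$ form a chain under specialization.

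The main obstacle will be cleanly executing the first step, namely the dictionary between \emph{vertical specialization in $\Spv{A}$} and \emph{localization of the valuation ring at a prime of $R_y$}. Once this translation is in place, the total ordering is a pure commutative-algebra statement about overrings of a valuation ring and requires no further input.
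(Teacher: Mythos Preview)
The paper does not give a proof of this proposition at all: it is simply quoted from Morel's notes \cite[Proposition I.3.2.3]{Morel}, with no argument supplied. So there is nothing in the paper to compare your proposal against.

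That said, your outline is correct and is exactly the standard argument one finds in the reference. Passing to $K = \fracfield(A/\supp(x))$ identifies vertical generalizations of $x$ with valuation subrings of $K$ containing $R_x$, and the classical fact that the overrings of a valuation ring in its fraction field form a chain (being the localizations at its totally ordered set of primes) finishes the job. The only point to be careful about is the direction of the dictionary: generalizations correspond to \emph{larger} valuation rings (equivalently, to quotients of the value group by convex subgroups), which you have stated correctly.
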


\begin{prop}\label{prop:vert_spec}
Let $(A, A^+)$ be a Huber pair and let $x \in \Spa{A}{A^+}$. Then there is a canonical order-preserving bijection from the set of vertical specializations of $x$ to $\Spa{\kappa(x)}{\kappa(x)^+}$, where $\kappa(x)^+$ is the image of $A^+$ in $\kappa(x)$. This bijection is constructed as follows.

Let $y$ be a vertical specialization of $x$. Then $y$ corresponds to a valuation $R_y \subset \kappa(x)$ such that for a natural map $i: k(y) \to k(x)$ and a projection $\pi : k(x)^+ \to \kappa(x)$ we have that $k(y)^+ = i\inv (\pi\inv(R_y))$. 
\end{prop}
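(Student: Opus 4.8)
The plan is to reduce the proposition to the commutative algebra of valuation rings, via the classical theory of composition of valuations. Write $\mathfrak p=\supp v_x$; recall that every vertical specialization of $x$ has the same support $\mathfrak p$, so all the valuations in sight are valuations of $\fracfield(A/\mathfrak p)$ and of its completions.

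\emph{Step 1: pass to completed residue fields.} If $y$ is a vertical specialization of $x$, then $v_y$ refines $v_x$ only along a convex subgroup of the value group, and under this kind of refinement the topology on $A$ is unchanged; consequently one has a canonical identification $\widehat{k(y)}\cong\widehat{k(x)}$ under which $i$ becomes the identity, and a vertical refinement of the continuous valuation $v_x$ is again continuous (these are the standard facts about vertical specializations; cf.\ \cite[\S 1.1]{HuEt}, \cite[\S I.3]{Morel}). Since completing a valued field does not change the residue field of its valuation ring, $\kappa(x)$, $\kappa(x)^+$ and $\pi$ are unaffected, and it suffices to prove the proposition with $k(x)$ replaced by $\widehat{k(x)}$, where now $i$ is an isomorphism. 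I keep writing $k(x)$, and set $R:=k(x)^+$, $\mathfrak m:=\mathfrak m_R$.

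\emph{Step 2: composition of valuations.} By Step 1 a vertical specialization $y$ of $x$ is the same datum as a valuation $w$ on $k(x)$ with $\supp w=0$ refining $v_x$, equivalently a valuation subring $S$ of $k(x)$ with $\mathfrak m\subseteq S\subseteq R$ (namely $S=k(y)^+$), and $S\mapsto S/\mathfrak m$ is an order-preserving bijection between such $S$ and the valuation subrings of $\kappa(x)=R/\mathfrak m$, by the classical correspondence for composition of valuations. Along these identifications, $y$ belongs to $\Spa{A}{A^+}$ — that is, $v_y\le 1$ on $A^+$ — exactly when $S$ contains the image of $A^+$ in $R$; since $\kappa(x)^+$ is \emph{defined} as the image of $A^+$ in $\kappa(x)$ and $\mathfrak m=\ker\pi\subseteq S$, this is equivalent to $R_y:=S/\mathfrak m\supseteq\kappa(x)^+$, i.e.\ to $R_y\in\Spa{\kappa(x)}{\kappa(x)^+}$. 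Hence $y\mapsto R_y$ is an order-preserving bijection onto $\Spa{\kappa(x)}{\kappa(x)^+}$ (the specialization order being, on both sides, reverse inclusion of valuation rings), and unwinding the identifications gives the asserted formula $k(y)^+=S=i^{-1}(\pi^{-1}(R_y))$.

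\emph{Main obstacle.} The valuation-ring bookkeeping in Step 2 is routine; the point that requires care is the foundational input of Step 1 — that a vertical specialization preserves the support and the completed residue field and remains continuous. These belong to Huber's theory of specializations in adic spaces, and the whole proposition amounts to isolating exactly these facts and then feeding them into the composition-of-valuations dictionary.
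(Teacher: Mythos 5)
Your proof is correct, and it is essentially the argument behind the paper's own one-line proof, which simply cites \cite[Proposition I.3.2.3]{Morel}: reduce to the classical composition-of-valuations dictionary for the valuation ring $k(x)^+$ after observing that a vertical specialization preserves the support, the (completed) residue field, and continuity. Since the paper gives no details, there is no competing "route" to compare against.

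The one place where the proof has real content is Step 1, which you rightly single out. For the record: if $x$ is non-analytic then $\supp v_x$ is open, so any valuation on $A$ with the same support is automatically continuous. If $x$ is analytic, write $H := \ker(\Gamma_y \twoheadrightarrow \Gamma_x)$ and pick a topologically nilpotent unit $\varpi$; then for $\gamma \in \Gamma_y$ with $\gamma < 1$ one has $\{a : v_y(a) < \gamma\} \supseteq \{a : v_x(a) < \bar{\gamma}\}$ when $\gamma \notin H$, and $\{a : v_y(a) < \gamma\} \supseteq \{a : v_x(a) < v_x(\varpi)\}$ when $\gamma \in H$, because convexity of $H$ and $v_y(\varpi) < 1$ with $v_y(\varpi) \notin H$ force $v_y(\varpi) < H$. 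This both gives continuity of $v_y$ and shows $v_x(\varpi)$, $v_y(\varpi)$ are cofinal, hence $\widehat{k(y)} \cong \widehat{k(x)}$ and $\kappa(x)$ are unchanged, as you assert. With Step 1 secured, your Step 2 is pure valuation-ring bookkeeping and is correct, including the check that the condition $v_y|_{A^+} \le 1$ translates exactly to $R_y \supseteq \kappa(x)^+$ because $\ker\pi = \mathfrak m \subseteq S$.
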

\begin{proof}
This is a corollary of \cite[Proposition I.3.2.3]{Morel}.
\end{proof}

On rigid spaces (and more generally, on analytic adic spaces), there are only vertical specializations. However, in our constructions in the proof of Theorem \ref{thm:main} we embed a rigid space in a larger adic space, which will indeed have other types of specializations. We shall now define another important class of specializations.

\subsubsection{Horizontal specializations}
Let $x \in \Spv{A}$. The \textit{characteristic group} of $x$ is the convex subgroup of the value group $\Gamma_x$ defined as follows
$$
c\Gamma_x := \left\langle \{\gamma \in \Gamma_x : \gamma \geq 1 \text{ and } \exists_{a \in A} |a(x)| = \gamma \}\right\rangle \subset \Gamma_x,
$$
where $\langle \cdot \rangle$ means that we generate a subgroup by these elements.

Let $x \in \Spv{A}$ and $H \subset \Gamma_x$ be a convex subgroup such that $c\Gamma_x \subset H$, then we define $x|_H$ as follows:
$$
|f(x|_H)| = 
\begin{cases} 
\begin{array}{cl}
|f(x)| & \text{ if } |f(x)| \in H, \\
0 & \text{ otherwise}.
\end{array}
\end{cases}
$$
Now if $y \in \Spv{X}$ is a specialization of $x$ then $y$ is called a \textit{horizontal specialization} of $x$ if it is of the form $x|_H$ as above.

In our proof most of the technical work that deal with specializations is done in Lemma \ref{lemma:sp_rz_properties}. There we make use of the following facts.

\begin{prop} \label{prop:horizontal}
Let $x \in \Spa{A}{A^+} \subset \Spv{A}$. Then for any horizontal specialization $y \in \Spv{A}$ of $x$ we have $y \in \Spa{A}{A^+}$. Moreover, horizontal specializations of $x$ are totally ordered with $x|_{c\Gamma_x}$ being the minimal horizontal specialization.
\end{prop}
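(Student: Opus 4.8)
The plan is to establish the two assertions separately; throughout, write $y = x|_H$ for a convex subgroup $H\subseteq\Gamma_x$ with $c\Gamma_x\subseteq H$, as in the definition of a horizontal specialization. For the first assertion I would verify the two conditions defining a point of $\Spa{A}{A^+}$. Boundedness is immediate: for $a\in A^+$ one has $|a(y)|\in\{\,|a(x)|,0\,\}$, hence $|a(y)|\le|a(x)|\le 1$. The substantive point is continuity of $y$. Since every value of $y$ lies in $H$, the value group $\Gamma_y$ is contained in $H$; so I would fix $\gamma\in\Gamma_y\setminus\{0\}$ and prove the set equality $\{a\in A:|a(y)|<\gamma\}=\{a\in A:|a(x)|<\gamma\}$, whose right-hand side is open because $x$ is continuous. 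This rests on a dichotomy: if $|a(x)|\in H$ then $|a(y)|=|a(x)|$ and the two conditions are literally the same; if $|a(x)|\notin H$ then necessarily $|a(x)|<1$ (otherwise $|a(x)|\ge 1$ would force $|a(x)|\in c\Gamma_x\subseteq H$), and convexity of $H$ together with $1\in H$ then forces $|a(x)|<h$ for \emph{every} $h\in H$, in particular $|a(x)|<\gamma$, while also $|a(y)|=0<\gamma$ — so $a$ lies in both sets. Hence $y$ is continuous and $y\in\Spa{A}{A^+}$.

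For the second assertion I would use the standard fact that the convex subgroups of a totally ordered abelian group form a chain under inclusion (if $H_1\not\subseteq H_2$ and $H_2\not\subseteq H_1$, pick $a\in H_1\setminus H_2$ and $b\in H_2\setminus H_1$, both $\ge 1$ after inverting if needed, and $a\le b$ say; then convexity of $H_2$ gives $a\in H_2$, a contradiction). Thus the convex subgroups $H$ with $c\Gamma_x\subseteq H\subseteq\Gamma_x$, which by definition index the horizontal specializations of $x$, form a chain with least element $c\Gamma_x$. It then suffices to check that $H_1\subseteq H_2$ implies that $x|_{H_1}$ is a specialization of $x|_{H_2}$ in $\Spv{A}$: granting this, $H\mapsto x|_H$ is order-reversing, so the horizontal specializations of $x$ are totally ordered by the specialization relation, with $x|_{c\Gamma_x}$ the minimal one. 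For that implication I would test on the subbasic opens $\{v:|a(v)|\le|b(v)|\ne 0\}$ of $\Spv{A}$: if $|a(x|_{H_1})|\le|b(x|_{H_1})|\ne 0$, then $|b(x)|\in H_1\subseteq H_2$, so $|b(x|_{H_2})|=|b(x)|\ne 0$, and a short case split on whether $|a(x)|\in H_2$ — using once more that $|a(x)|\notin H_1$ places $|a(x)|$ strictly below all of $H_1$, hence below $|b(x)|$ — gives $|a(x|_{H_2})|\le|b(x|_{H_2})|$.

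The step I expect to be most delicate is the continuity verification in the first paragraph: one must invoke the precise definition of the characteristic group $c\Gamma_x$ to prevent values $\ge 1$ from leaving $H$, and convexity of $H$ to pin down where the values $<1$ that do leave $H$ sit relative to $\gamma$. Once this bookkeeping is done the argument is purely formal. The statement is in any case implicit in Huber's construction of compactifications in \cite{HuEt} (see also \cite{Morel}), so one could alternatively just cite it.
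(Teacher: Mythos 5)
Your proof is correct, and it supplies a self-contained argument for facts that the paper itself handles only by citation: the paper's ``proof'' of this proposition is the single line ``This is \cite[Proposition II.3.2.3]{Morel} and \cite[Proposition 4.18]{Wed},'' so you are actually filling in what the paper delegates to the references. Your continuity verification isolates the right dichotomy — whether $|a(x)|$ lies in $H$ — and invokes the characteristic group at exactly the right moment, to rule out $|a(x)|\geq 1$ once $|a(x)|\notin H$, after which convexity of $H$ (with $1\in H$) places $|a(x)|$ strictly below every element of $H$; this is in substance the argument found in the cited sources (compare \cite[\S 7]{Wed}). The second half, that convex subgroups of a totally ordered group form a chain and that $H\mapsto x|_H$ is monotone, is also the standard route, and checking the specialization relation against the subbasic opens $\{v:|a(v)|\leq|b(v)|\neq 0\}$ of $\Spv{A}$ is exactly what is needed. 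One small terminological slip: with the paper's convention that $y\preceq x$ means ``$y$ is a specialization of $x$'' (so ``minimal'' $=$ ``most special''), your implication $H_1\subseteq H_2\Rightarrow x|_{H_1}\preceq x|_{H_2}$ says that $H\mapsto x|_H$ is order-\emph{preserving}, not order-reversing; the smallest allowable $H$, namely $c\Gamma_x$, therefore produces the minimal horizontal specialization, which is the conclusion you (correctly) state. You observe at the end that one could simply cite — which is the paper's choice; what your write-up buys is that it makes the interplay of characteristic group, convexity, and continuity explicit, which is the same bookkeeping that reappears with more at stake in the proof of Lemma~\ref{lemma:sp_rz_properties}.
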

\begin{proof}
This is \cite[Proposition II.2.3.1]{Morel} and \cite[Proposition 4.18]{Wed}.
\end{proof}

\begin{prop}[{\cite[4.19]{Wed}}]\label{spec_factorization}
Let $x,y \in \Spv{A}$ such that $y \preceq x$. Then there exists $x' \in \Spv{A}$ such that $x \succeq x'$ is a vertical specialization and $x' \succeq y$ is a horizontal specialization.
\end{prop}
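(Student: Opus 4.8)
The statement is purely about the combinatorics of valuations on $A$, so the plan is to construct $x'$ by a composition-of-valuations procedure applied to $v_x$ and $v_y$, and then to verify the two assertions by evaluating on elements of $A$. As a preliminary reduction, note that $\supp$ is non-decreasing under specialization and is preserved by vertical specialization, so I may replace $A$ by $A/\supp(x)$: the induced map $\Spv{A/\supp(x)} \to \Spv{A}$ is a closed embedding onto $\{v : \supp v \supseteq \supp x\}$, which contains both $x$ and $y$ and is compatible with closures and with the vertical/horizontal dichotomy. Thus assume $A$ is a domain, $\supp(x) = (0)$, and write $k := \fracfield(A)$, so that $v_x$ is a valuation on $k$; set $\mathfrak q := \supp(y)$, so that $v_y$ has support $\mathfrak q$ and is attached to a valuation on $k(\mathfrak q) = \fracfield(A/\mathfrak q)$. (If $\mathfrak q = (0)$, then $y \preceq x$ is already a vertical specialization and $x' = y$ works, so assume $\mathfrak q \ne (0)$.)

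The next step is to read the relation $y \preceq x$ off the topology of $\Spv{A}$. Unwinding the definition of the subbasic opens $\{v : v(f) \le v(g) \ne 0\}$, the relation $y \preceq x$ says exactly: for all $f, g \in A$, if $v_y(f) \le v_y(g) \ne 0$ then $v_x(f) \le v_x(g) \ne 0$. Two consequences: taking $g = 1$ gives $v_x(\mathfrak q) \le 1$, i.e.\ the image of $\mathfrak q$ in $k$ lies in $\Oh_{v_x}$; and the assignment $v_y(f) \mapsto v_x(f)$ (for $f \in A \setminus \mathfrak q$) is well defined and extends to an order-preserving homomorphism $\widetilde{\phi} \colon \Gamma_{v_y} \to \Gamma_{v_x}$ with $\widetilde{\phi} \circ v_y = v_x$ on $A \setminus \mathfrak q$. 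The role of $\widetilde{\phi}$ is that $v_y$ refines $v_x$ along $A \setminus \mathfrak q$: it distinguishes elements with the same $v_x$-value that lie on opposite sides of $\mathfrak q$.

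Using $\widetilde{\phi}$ I then construct a valuation $v'$ on $k$ (a point of $\Spv A$ via restriction), morally the composite of $v_x$ with the valuation attached to $v_y$ on the relevant residue quotient, arranged so that $v'(f) = v_y(f)$ for $f \notin \mathfrak q$ and so that $v_x$ is recovered from $v'$ by passing to a convex-subgroup quotient; concretely, the value group of $v'$ is $\Gamma_{v_x}$ enlarged by the ``extra precision'' $\ker\widetilde{\phi}$ needed to make $\mathfrak q$ the contraction of a prime ideal. The inequalities recorded above are precisely what is needed to check that $v'$ is a valuation (the ultrametric inequality for $v'$ reduces, case by case, to ``$v_y(f) \le v_y(g) \Rightarrow v_x(f) \le v_x(g)$''). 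Put $x' := v'$. Then $\supp(x') = (0) = \supp(x)$ and $v_x$ is a convex-subgroup quotient of $v'$, hence a generization of $v'$, so $x \succeq x'$ is a vertical specialization. Let $H \subseteq \Gamma_{v'}$ be the convex subgroup generated by $\Gamma_{v_y}$; one checks that $c\Gamma_{v'} \subseteq H$, because the only values $\ge 1$ attained by $v'$ are those attained by $v_y$, the values newly introduced in passing from $v_x$ to $v'$ all lying below $H$. Finally $x'|_H = y$: for $f \notin \mathfrak q$ one has $|f(x'|_H)| = v'(f) = v_y(f) = |f(y)|$, and for $f \in \mathfrak q$ one has $v'(f) \notin H$, so $|f(x'|_H)| = 0 = |f(y)|$; in particular $\supp(x'|_H) = \mathfrak q$. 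Hence $x' \succeq y$ is a horizontal specialization, as required.

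I expect the main obstacle to be the construction of $v'$. The delicate point is that, a priori, $\mathfrak q$ need not be the contraction of a prime ideal of $\Oh_{v_x}$ — two elements of $A$ with the same $v_x$-value can lie on opposite sides of $\mathfrak q$ — so one genuinely has to refine $v_x$, and it is exactly the hypothesis $y \preceq x$, encoded in the homomorphism $\widetilde{\phi}$, that makes this refinement and the gluing well defined; a few degenerate configurations (e.g.\ $v_x$ trivial on $A \setminus \mathfrak q$, where $v_x$ itself may already be centered strictly below $\mathfrak q$) have to be treated by hand. The rest is value-group bookkeeping: keeping track of which convex subgroup of $\Gamma_{v'}$ is the ``vertical'' kernel $\ker(\Gamma_{v'} \to \Gamma_{v_x})$ and which is the ``horizontal'' subgroup $H$, and verifying they sit as required; once the construction is in place everything else is a routine evaluation on elements of $A$. (Alternatively the statement can be extracted from the classification of primary and secondary specializations in $\Spv{A}$ given in \cite{Morel} and \cite{Wed}, but the explicit construction above seems the most transparent.)
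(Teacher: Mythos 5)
The paper itself offers no proof here; it simply cites \cite[4.19]{Wed} (and the analogous statement in Morel's notes), so there is nothing to compare against — your attempt has to stand on its own.

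The reduction to a domain, the unwinding of $y\preceq x$ into the implication ``$v_y(f)\le v_y(g)\ne 0\Rightarrow v_x(f)\le v_x(g)\ne 0$'', and the well-definedness of the order-preserving homomorphism $\widetilde\phi\colon\Gamma_{v_y}\to\Gamma_{v_x}$ sending $v_y(a)\mapsto v_x(a)$ for $a\notin\mathfrak q$ are all fine. The gap is exactly where you flag the main obstacle: the construction of $v'$. Your concrete description — that $\Gamma_{v'}$ is $\Gamma_{v_x}$ ``enlarged by the extra precision $\ker\widetilde\phi$'' — is wrong in general, and the failure is not confined to degenerate corner cases that can be ``treated by hand''. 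Take $A=k_0[t]$, $v_x$ the trivial valuation on $A$ (support $(0)$), $\mathfrak q=(t)$, and $v_y$ the trivial valuation on $A/(t)=k_0$. One checks directly from the subbasic opens that $y\preceq x$. Here $\Gamma_{v_x}=\Gamma_{v_y}=\{1\}$, so $\widetilde\phi$ is an isomorphism and $\ker\widetilde\phi$ is trivial; your recipe would give $v'=v_x$, whose only horizontal specialization is itself, not $y$. The correct $x'$ is, for instance, the $t$-adic valuation of $k_0(t)$, with value group $\mathbb Z$ — a genuine enlargement of $\Gamma_{v_x}$ that is invisible to $\ker\widetilde\phi$. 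The point (which you actually identify in prose but do not resolve) is that $\mathfrak q$ need not be the contraction of a prime of $\Oh_{v_x}$, and producing a refinement $v'$ of $v_x$ for which it becomes one is the entire content of the proposition; it typically requires choosing a valuation ring of $\fracfield(A/\supp(x))$ dominating $A_{\mathfrak q}$ and carefully splicing its value group with $\Gamma_{v_x}$ and $\Gamma_{v_y}$, not merely throwing in $\ker\widetilde\phi$. As written, the proposal identifies the right crux but replaces it with an incorrect formula and a ``to be treated by hand'', so the core step is missing.
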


\begin{prop}[Adic going-up {\cite[Corollary 4.20]{Wed}}] \label{adic_going_up}
Let $y \in \Spv{A}$ and $\mathfrak{p} \subset \supp(y)$ be a prime ideal. Then there exists a horizontal generalization $y \preceq x \in \Spv{A}$ such that $\supp(x) = \mathfrak{p}$. 
\end{prop}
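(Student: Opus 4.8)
The plan is to construct the point $x$ by hand, realizing $y$ as a horizontal specialization $x|_H$ obtained by composing the valuation attached to $y$ with a valuation ring that dominates the local ring recording the passage from $\supp(y)$ down to $\mathfrak{p}$. Write $\mathfrak{q}:=\supp(y)$, let $\kappa:=\fracfield(A/\mathfrak{q})$ and let $R_y\subseteq\kappa$ be the valuation ring attached to $y$, with value group $\Gamma_{R_y}$. I would then set $B:=(A/\mathfrak{p})_{\mathfrak{q}/\mathfrak{p}}$, a local domain with fraction field $\kappa':=\fracfield(A/\mathfrak{p})$, maximal ideal $\mathfrak{n}$, and residue field $B/\mathfrak{n}=\kappa$. (When $\mathfrak{p}=\mathfrak{q}$ one has $B=\kappa$ and the construction below simply returns $x=y$, so there is nothing to do; the interesting case is $\mathfrak{p}\subsetneq\mathfrak{q}$.)

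Next I would invoke the standard fact that every local ring is dominated by a valuation ring of its fraction field, in order to fix a valuation ring $S\subseteq\kappa'$ with $B\subseteq S$ and $\mathfrak{m}_S\cap B=\mathfrak{n}$; domination identifies $\kappa=B/\mathfrak{n}$ with a subfield of the residue field $\bbar S:=S/\mathfrak{m}_S$. By Chevalley's extension theorem I would choose a valuation ring $R_y'\subseteq\bbar S$ with $R_y'\cap\kappa=R_y$, and then form the composite valuation ring $R:=\pi\inv(R_y')\subseteq S$, where $\pi\colon S\to\bbar S$ is the projection. Then $R$ is a valuation ring of $\kappa'$ whose value group $\Gamma_R$ contains $H:=\Gamma_{R_y'}$ as a convex subgroup with $\Gamma_R/H=\Gamma_S$, and on the units of $S$ the valuation $v_R$ agrees with $v_{R_y'}\circ\pi$. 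I would let $x\in\Spv{A}$ be the valuation induced by $A\to A/\mathfrak{p}\hookrightarrow\kappa'\xrightarrow{v_R}\Gamma_R\cup\{0\}$, so that $\supp(x)=\mathfrak{p}$ by construction.

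It then remains to verify that $y=x|_H$. The key point to establish is that, for $a\in A$ with image $\bar a$ in $B$, one has $|a(x)|\in H\setminus\{0\}$ if and only if $a\notin\mathfrak{q}$: if $a\notin\mathfrak{q}$ then $\bar a\in B\setminus\mathfrak{n}=B^\times\subseteq S^\times$, so $v_S(\bar a)=1$ and hence $v_R(\bar a)$ lies in $H$ and is nonzero; if $a\in\mathfrak{q}\setminus\mathfrak{p}$ then $\bar a\in\mathfrak{n}\subseteq\mathfrak{m}_S$, so $v_S(\bar a)<1$ and hence $v_R(\bar a)\notin H$; and if $a\in\mathfrak{p}$ then $|a(x)|=0$. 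Since any $a$ with $|a(x)|\geq 1$ must have $v_S(\bar a)=1$ (as $\bar a\in B\subseteq S$ forces $v_S(\bar a)\leq 1$, and the quotient map $\Gamma_R\to\Gamma_S$ is order-preserving), such $a$ lie outside $\mathfrak{q}$ and so $|a(x)|\in H$; hence $c\Gamma_x\subseteq H$ and $x|_H$ is defined. Finally, for $a\notin\mathfrak{q}$ the description of $v_R$ on units of $S$ gives $|a(x)|=v_{R_y'}(\pi(\bar a))$, and since $\pi(\bar a)$ is the image of $a$ in $\kappa=B/\mathfrak{n}$ and $R_y'\cap\kappa=R_y$, this equals $v_{R_y}(\pi(\bar a))=|a(y)|$; whereas for $a\in\mathfrak{q}$ both $|a(x|_H)|$ and $|a(y)|$ vanish. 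Thus $x|_H=y$ as valuations, and since any valuation of the form $x|_H$ is a specialization of $x$, the point $x$ is the desired horizontal generalization of $y$ with $\supp(x)=\mathfrak{p}$.

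The only delicate part is this last step, and within it the exact match between ``$|a(x)|$ lies in $H$'' and ``$a\notin\mathfrak{q}$'': this is where it matters that $S$ dominates $B$ — so that $\mathfrak{m}_S$ contracts to exactly $\mathfrak{n}$, equivalently to $\mathfrak{q}/\mathfrak{p}$ in $A/\mathfrak{p}$ — and that $H$ is precisely the value group of the residue valuation $R_y'$ appearing in the composite. The two external inputs, namely that a local ring is dominated by a valuation ring of its fraction field and Chevalley's extension theorem, are completely standard.
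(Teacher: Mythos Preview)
The paper does not give its own proof of this proposition; it simply cites \cite[Corollary 4.20]{Wed}. Your argument is correct and is essentially the standard one: build a composite valuation on $\fracfield(A/\mathfrak{p})$ by first dominating the local ring $(A/\mathfrak{p})_{\mathfrak{q}/\mathfrak{p}}$ by a valuation ring $S$, then extending $R_y$ to the residue field of $S$ via Chevalley, and finally pulling back. The verification that $c\Gamma_x\subseteq H$ and that $x|_H$ agrees with $y$ is carried out carefully; the only place to be slightly more explicit is the last identification $v_{R_y'}(\pi(\bar a))=|a(y)|$, which is an equality of \emph{equivalence classes} of valuations (the value group of $x|_H$ is $\Gamma_{R_y'}$, which merely contains $\Gamma_{R_y}$), but this is exactly what equality in $\Spv{A}$ means, so the conclusion stands.
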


\begin{prop}\label{hor_cont}
Let $A$ be a ring with $I$-adic topology, for some ideal $I \subset A$. Let $x \in \Spv{A}$ and $y$ be its minimal horizontal specialization. If $\supp(y) \in V(I)$ then there exists a horizontal specialization $x' \preceq x$ which is continuous and $\supp(x') \not\in V(I)$. In particular, $x'$ is unique and is minimal among horizontal specializations of $x$ that have support outside $V(I)$. 
\end{prop}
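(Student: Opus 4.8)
The plan is to take $x'$ to be the horizontal specialization of $x$ attached to the \emph{smallest} convex subgroup of $\Gamma_x$ whose associated support still avoids $V(I)$, and then to deduce continuity from this minimality. Recall (Proposition~\ref{prop:horizontal}) that the horizontal specializations of $x$ are exactly the valuations $x|_H$, for $H$ a convex subgroup of $\Gamma_x$ containing $c\Gamma_x$, that they form a chain under specialization, and that $\supp(x|_H)=\{a\in A : v_x(a)=0 \text{ or } v_x(a)\notin H\}$; in particular $\supp(x|_H)$ grows as $H$ shrinks, from $\supp(x)$ at $H=\Gamma_x$ up to $\supp(y)$ at $H=c\Gamma_x$. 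Note that the assertion is only non-vacuous when $\supp(x)\notin V(I)$, which we assume from now on (it holds automatically in the situations where this proposition is invoked, where $x$ comes from the rigid generic fibre).

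First I would fix generators $f_1,\dots,f_r$ of $I$, using that $I$ is finitely generated — which in our applications is clear, as $I=\varpi A$ is principal. From the support formula, a convex subgroup $H\supseteq c\Gamma_x$ satisfies $\supp(x|_H)\notin V(I)$ precisely when $v_x(f_i)\in H$ for some $i$. The hypothesis $\supp(y)\in V(I)$ means $I\subseteq\supp(y)$, so for every $i$ the value $v_x(f_i)$ is either $0$ or has image $\overline{v_x(f_i)}<1$ in $\Gamma_x/c\Gamma_x$; moreover at least one $v_x(f_i)$ is nonzero, since $I\not\subseteq\supp(x)$. Put $\mu:=\max_i\overline{v_x(f_i)}$, the maximum being over the (non-empty set of) indices with $v_x(f_i)\ne0$; then $\mu$ is an element of $\Gamma_x/c\Gamma_x$ with $\mu<1$. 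Let $H'\subseteq\Gamma_x$ be the convex subgroup with $H'/c\Gamma_x$ equal to the convex subgroup generated by $\mu$, that is $\{\bar\delta : \mu^m\le\bar\delta\le\mu^{-m}\text{ for some }m\ge0\}$. Using the elementary fact that among elements $<1$ a larger one generates a smaller convex subgroup, one checks that $H'$ is the smallest convex subgroup containing $c\Gamma_x$ and some $v_x(f_i)$; equivalently, $\supp(x|_H)\notin V(I)$ if and only if $H\supseteq H'$. Hence $x':=x|_{H'}$ has $\supp(x')\notin V(I)$ and $x'\preceq x|_H$ for every horizontal specialization $x|_H$ of $x$ with support outside $V(I)$. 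This is exactly the uniqueness and minimality asserted in the statement, and it will imply that $x'$ is the unique continuous such specialization once continuity is established.

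It then remains to show that $x'$ is continuous for the $I$-adic topology on $A$. The key observation is that $\overline{v_x(b)}\le1$ in $\Gamma_x/c\Gamma_x$ for every $b\in A$: otherwise $v_x(b)>1$ would be an attained value lying outside $c\Gamma_x$, contradicting the definition of the characteristic group. Consequently, writing $a\in I^n$ as $a=\sum_{|\alpha|=n}c_\alpha\prod_i f_i^{\alpha_i}$ with $c_\alpha\in A$, multiplicativity and the ultrametric inequality give, in $\Gamma_x/c\Gamma_x$, $\overline{v_{x'}(a)}\le\overline{v_x(a)}\le\max_\alpha\overline{v_x(c_\alpha)}\prod_i\overline{v_x(f_i)}^{\alpha_i}\le\mu^n$. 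Now let $\gamma\in\Gamma_{x'}$; its image $\bar\gamma$ lies in $H'/c\Gamma_x$, so $\mu^m\le\bar\gamma$ for some $m\ge0$, and then for every $a\in I^{m+1}$ we obtain $\overline{v_{x'}(a)}\le\mu^{m+1}<\mu^m\le\bar\gamma$, hence $v_{x'}(a)<\gamma$. Thus $\{a\in A : v_{x'}(a)<\gamma\}$ contains the open ideal $I^{m+1}$ and is therefore open, so $x'$ is continuous.

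The substantive, as opposed to purely formal, content sits in the middle step: the identification of $H'$, which rests on a little care with convex subgroups generated by a single element (the comparison above), and — underneath it — on $I$ being finitely generated, so that the maximum $\mu$ exists; for a general $I$ one would instead have to assume that the classes $\overline{v_x(f)}$, $f\in I$, admit a maximum. Once $H'$ and $\mu$ are correctly pinned down, the continuity estimate is immediate, and so is the uniqueness/minimality clause. I expect that carefully proving the equivalence ``$\supp(x|_H)\notin V(I)\iff H\supseteq H'$'' together with the order-theoretic bookkeeping in the chain of convex subgroups of $\Gamma_x$ (for which Propositions~\ref{p:v_totally_orderd} and~\ref{prop:horizontal} are the relevant tools) is where most of the care is needed.
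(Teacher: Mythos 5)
Your proof is correct and gives what appears to be the same argument as the cited source (the paper only refers to Wedhorn, \S 7.1--7.2, without reproducing a proof): one identifies $H'$ as the smallest convex subgroup of $\Gamma_x$ containing $c\Gamma_x$ together with some $v_x(f_i)$, sets $x'=x|_{H'}$, and verifies continuity via the bound $\overline{v_x(a)}\le\mu^n$ for $a\in I^n$, which produces an open ideal $I^{m+1}\subset\{a : v_{x'}(a)<\gamma\}$ for any $\gamma\in\Gamma_{x'}\subset H'$. You are also right to flag that the statement implicitly requires $\supp(x)\notin V(I)$ (otherwise every horizontal specialization of $x$ has support in $V(I)$ and no such $x'$ exists) and that $I$ should be finitely generated for the maximum $\mu$ to exist; both hold automatically in the paper's application, where $A=\mathscr{A}$ is a Huber ring with $I=\varpi\mathscr{A}$ and $\supp(x)$ was chosen to be non-open, but it would be cleaner to build these hypotheses into the statement.
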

\begin{proof}
This is the discussion of sections 7.1 and 7.2 in \cite{Wed}.
\end{proof}

\subsection{Relative Riemann--Zariski spaces}\label{section:rz}

In \cite{RZ} Temkin introduced the notion of relative Riemann--Zariski spaces. Here we recall the definition and the properties of Riemann--Zariski spaces that will be relevant for our proof.

\begin{definition}
Let $f: Y \to X$ be a separated morphism between quasi-compact quasi-separated schemes. Consider factorization of $f$ as on the diagram below
\begin{center}
	\begin{tikzcd}
	    Y \arrow[]{dd}{f} \arrow[]{dr}{f_i}\\
	    & X_i \arrow[]{dl}{g_i}\\
	    X
	\end{tikzcd}
\end{center}  
    such that $g_i$ is proper and $f_i$ is dominant (i.e. image of $f_i$ is dense). Then $X_i$ is called a $Y$-\textit{modification} of $X$ and the \textit{Riemann--Zariski space} is defined as the limit $\RZ{Y}{X} := \lim\limits_{\longleftarrow} X_i$ in the category of locally ringed spaces, where $X_i$ runs over all $Y$-modifications of $X$. If $A$ and $B$ are rings, we often write $\RZ{B}{A}$ instead of $\RZ{\Spec{B}}{\Spec{A}}$.
\end{definition}

The above system $(X_i)$ is cofiltering, so everything behaves in a nice way. Observe, that here we equipped $\RZ{Y}{X}$ with a structure sheaf $\Oh_{\RZ{Y}{X}} = \lim\limits_{\longrightarrow} \Oh_{X_i}$ (with a little abuse of notation, as we need to take inverse images of the sheaves inside the colimit). Let $\eta: Y \to \RZ{Y}{X}$ be a natural map to the inverse limit. Then we define a \textit{sheaf of meromorphic functions} as $\mathcal{M}_{\RZ{Y}{X}} = \eta_* \Oh_Y$. There is a relation between the aforementioned two sheaves on $\RZ{Y}{X}$, which is specified by the next proposition.

\begin{prop}[{\cite[Corollary 3.5.2]{RZ}}]\label{rz_sheaf}
The map $\eta$ is injective, each point $x \in \RZ{Y}{X}$ has a unique minimal generalization $y \in \eta(Y)$, $\mathcal{M}_{\RZ{Y}{X}, x} \simeq \Oh_{Y, y}$. Moreover there exists a valuation ring $R \subset \Oh_{Y, y} / \mathfrak{m}_y$ whose preimage in $\Oh_{Y, y}$ is exactly $\Oh_{\RZ{Y}{X}, x}$.
\end{prop}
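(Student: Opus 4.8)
The plan is to deduce all four assertions from the \emph{relative Riemann--Zariski theorem}: an explicit description of the points of $\RZ{Y}{X}$ in terms of valuations, together with the computation of the local rings of the limit. First I would record the formal preliminaries. The category of $Y$-modifications of $X$ is cofiltered --- given two of them, their fibre product over $X$ is proper over $X$ and, after being replaced by the scheme-theoretic image of $Y$, still receives a dominant map from $Y$ --- and its transition maps are proper, hence spectral; so by Proposition~\ref{p:spectral_limit} the space $\RZ{Y}{X}=\varprojlim_i X_i$ is spectral, and $\Oh_{\RZ{Y}{X},x}=\colim_i\Oh_{X_i,x_i}$ is a filtered colimit of local rings along local homomorphisms, hence local. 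Since $\RZ{Y}{X}$ is local over $X$ and all four claims concern the behaviour near a single point $x$, it is enough to treat affine spectra $\RZ{B}{A}$ attached to a ring map $A\to B$ with $\Spec{B}\to\Spec{A}$ separated, after shrinking $Y$ to an affine open $\Spec{B}$.

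The heart of the argument is the bijection: a point of $\RZ{B}{A}$ is the same datum as a pair $(\mathfrak{p},R)$ with $\mathfrak{p}\in\Spec{B}$ and $R$ a valuation ring of $\kappa(\mathfrak{p})$ (so $\fracfield(R)=\kappa(\mathfrak{p})$) containing the image of $A$; under this bijection $\eta$ sends $\mathfrak{q}$ to $(\mathfrak{q},\kappa(\mathfrak{q}))$ (the trivial valuation), the stalk of $\Oh_{\RZ{B}{A}}$ at $(\mathfrak{p},R)$ is the preimage of $R$ under $B_{\mathfrak{p}}\twoheadrightarrow\kappa(\mathfrak{p})$, and specialization of points is controlled by domination and by the support of the valuation datum --- exactly as for adic spaces in Proposition~\ref{prop:vert_spec}. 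One direction is the valuative criterion of properness: a pair $(\mathfrak{p},R)$ produces a compatible system $(x_i)$ by applying the criterion to each proper $g_i\colon X_i\to X$, with generic point $\Spec{\kappa(\mathfrak{p})}\xrightarrow{f_i}X_i$ and closed point supplied by $\Spec{R}\to X$. For the converse one sends $x=(x_i)$ to the union $\bigcup_i\Oh_{X_i,x_i}$, which one verifies to be the preimage, in a suitable localization $B_{\mathfrak{p}}$, of a valuation ring of $\kappa(\mathfrak{p})$; the non-formal input here is that the centre-refining blow-ups one needs are again $Y$-modifications --- blowing up along a centre disjoint from the dense image of the generic point keeps the map from $Y$ dominant --- and that they are cofinal for resolving a given valuation. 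This is the classical Riemann--Zariski argument, and it is where the real work lies.

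Granting this description, the remaining claims are bookkeeping. Injectivity of $\eta$ is immediate since $(\mathfrak{q},\kappa(\mathfrak{q}))$ remembers $\mathfrak{q}$. For $x=(\mathfrak{p},R)$, moving $R$ up to the trivial valuation of $\kappa(\mathfrak{p})$ exhibits $\eta(y)$, with $y:=\mathfrak{p}$, as a generalization of $x$ in $\eta(Y)$; it is the unique \emph{minimal} one, because any generalization of $x$ in $\eta(Y)$ has the form $(\mathfrak{q},\kappa(\mathfrak{q}))$ with $\mathfrak{q}\subseteq\mathfrak{p}$ and a valuation datum compatible with $R$, and a single valuation ring $R$ pins these down to one branch, forcing such a point to be a generalization of $\eta(y)$. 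Then $\Oh_{\RZ{Y}{X},x}=\colim_i\Oh_{X_i,x_i}$ is, by the previous paragraph, precisely the preimage of the valuation ring $R\subset\kappa(y)=\Oh_{Y,y}/\mathfrak{m}_y$ under $\Oh_{Y,y}\twoheadrightarrow\kappa(y)$, which is the last assertion. Finally $\mathcal{M}_{\RZ{Y}{X},x}=(\eta_*\Oh_Y)_x=\colim_{U\ni x}\Oh_Y(\eta\inv(U))$; since $\eta\inv$ commutes with intersections and the intersection of all open neighbourhoods of $x$ in the spectral space $\RZ{Y}{X}$ is the set of generalizations of $x$, which by the uniqueness just proved maps back to the set of generalizations of $y$, i.e.\ to $\Spec{\Oh_{Y,y}}$, and since the basic opens $D(g)\ni y$ are cofinal among open subsets containing that set, the colimit computes $\Oh_{Y,y}$, giving $\mathcal{M}_{\RZ{Y}{X},x}\simeq\Oh_{Y,y}$. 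The main obstacle is entirely in the middle paragraph --- proving the relative Riemann--Zariski theorem --- whereas the rest is formal.
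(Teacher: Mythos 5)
The paper gives no proof of this statement: it is a verbatim citation of Temkin's Corollary~3.5.2 in \cite{RZ}, and the paper relies on the reference. Your proposal attempts a self-contained re-proof, which is reasonable in spirit, but the pivotal bijection you assert is wrong as stated, and the gap infects the parts of the argument that you present as ``bookkeeping.''

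You claim that a point of $\RZ{B}{A}$ \emph{is the same datum as} an arbitrary pair $(\mathfrak{p},R)$ with $\mathfrak{p}\in\Spec{B}$ and $R$ a valuation ring of $\kappa(\mathfrak{p})$, with $\fracfield(R)=\kappa(\mathfrak{p})$, containing the image of $A$. This assignment is not injective. Take $A=k$ and $B=k[t]$, so that $\RZ{k[t]}{k}$ is homeomorphic to $\mathbb{P}^1_k$. The two distinct pairs $\bigl((0),\,k[t]_{(t)}\bigr)$ and $\bigl((t),\,k\bigr)$ both satisfy your hypotheses, and both produce, via the valuative criterion applied to each modification $X_i$, the compatible system landing at $0\in\mathbb{P}^1$ for every $i$ --- hence the same point of $\RZ{k[t]}{k}$. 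Correspondingly your inverse construction is under-determined: the colimit $\bigcup_i\Oh_{X_i,x_i}=k[t]_{(t)}$ is the preimage of a valuation ring of $\kappa(\mathfrak{p})$ simultaneously for $\mathfrak{p}=(0)$ (with $R=k[t]_{(t)}$) and for $\mathfrak{p}=(t)$ (with $R=k$); the phrase ``a suitable localization $B_{\mathfrak{p}}$'' hides that $\mathfrak{p}$ is not pinned down. The error then propagates: applied to the representing pair $\bigl((0),k[t]_{(t)}\bigr)$, your uniqueness argument would declare $y=(0)$ to be \emph{the} minimal generalization in $\eta(Y)$, but the point in question is $\eta\bigl((t)\bigr)$ itself and its minimal generalization is $y=(t)$, strictly more special than $\eta\bigl((0)\bigr)$. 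The correct parametrization --- exactly the content of Temkin's Corollary~3.4.7, quoted in the paper as Proposition~\ref{rz_subspace} --- is by valuations on $B$ bounded by $A$ that admit \emph{no horizontal specialization}, equivalently pairs $(\mathfrak{p},R)$ whose characteristic group fills the whole value group; it is this extra condition that singles out the right support $\mathfrak{p}$ and makes the minimal generalization unique. Your proposal acknowledges that ``this is where the real work lies'' but then elides the very condition that constitutes that work. Once the bijection is stated correctly, the remaining steps (injectivity of $\eta$, the stalk formula, and the computation of $\mathcal{M}_{\RZ{Y}{X},x}$) do go through essentially as you describe.
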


Historically, Riemann--Zariski spaces were first introduced in a special case by Zariski to study resolutions of singularities. In his special case, Zariski was able to identify points on $\RZ{Y}{X}$ with certain isomorphism classes of valuations. Since points on adic spaces also correspond to valuations one should ask what is the relation between these two phenomena. The answer is as follows.

\begin{prop}[{\cite[Corollary 3.4.7]{RZ}}] \label{rz_subspace}
Let $A \subset B$ be rings with the discrete topology. Then the underlying topological space of $\RZ{B}{A}$ is the subspace of the adic space $\Spa{B}{A}$ consisting of points with no horizontal specialization.
\end{prop}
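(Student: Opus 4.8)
The plan is to compare the two topological spaces via their defining limits and then identify structure sheaves. Recall that by definition $\RZ{B}{A} = \varprojlim X_i$, where $X_i$ ranges over $\Spec{B}$-modifications of $\Spec{A}$, i.e. factorizations $\Spec{B} \to X_i \to \Spec{A}$ with $X_i \to \Spec{A}$ proper and $\Spec{B} \to X_i$ dominant. On the other side, $\Spa{B}{A}$ (with $A, B$ discrete) can also be presented as a cofiltered limit: for a proper $\Spec{A}$-scheme $X_i$ equipped with a dominant $\Spec{B}$-point, the adic spectrum $\Spa{X_i}{}$ — glued from $\Spa{\Oh_{X_i}(U)}{\Oh_{X_i}(U)}$ over affine opens, which makes sense since $\Oh_{X_i}$ carries the discrete topology — is a qcqs adic space, hence spectral, and the transition maps (pullbacks along proper modifications) are spectral. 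So both sides are cofiltered limits of spectral spaces with spectral transition maps, and by Proposition \ref{p:spectral_limit}(a) both are spectral. The first step is therefore to produce a natural map $\Spa{B}{A} \to \RZ{B}{A}$ and a natural map in the other direction from the locus of points with no horizontal specialization.

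First I would construct the comparison. A point of $\Spa{B}{A}$ is a valuation $v: B \to \Gamma_v \cup \{0\}$ which is continuous (automatic, discrete topology) and satisfies $v(a) \le 1$ for all $a \in A$ — equivalently, it is a valuation ring $\Oh_v \subset k(v)$ of the residue field at $\mathfrak{p} = \supp(v)$, containing the image of $A$. Given such $v$, for each modification $X_i$ the valuative criterion of properness applied to $X_i \to \Spec{A}$ — using the composite $\Spec{k(v)} \to \Spec{B} \to \Spec{A}$ and the valuation ring $\Oh_v$ — yields a unique point $x_i \in X_i$ together with a local homomorphism $\Oh_{X_i, x_i} \to \Oh_v$; compatibility over the cofiltered system gives a point of $\RZ{B}{A}$. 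This defines $\Phi: \Spa{B}{A} \to \RZ{B}{A}$, and it is visibly continuous and spectral because on each $X_i$ it is induced by a scheme map $\Spa{X_i}{} \to X_i$ (the "support" map $v \mapsto \supp(v)$, which is continuous and spectral on adic spectra of discrete rings).

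Next I would pin down the image and injectivity, and cut down the source. The key point is that a point $x \in \Spa{B}{A}$ has no horizontal specialization exactly when its valuation has no proper convex subgroup containing the characteristic group — but since $B$ carries the discrete topology, $c\Gamma_x = \Gamma_x$ (every $|b(x)| \ge 1$ is already in the characteristic group once we throw in $b$ and notice $|b(x)|^{-1} = |b'(x)|$ is not forced; more carefully, the characteristic group is all of $\Gamma_x$ because for discrete $B$ continuity imposes no restriction and Proposition \ref{prop:horizontal} identifies the minimal horizontal specialization $x|_{c\Gamma_x}$ — the locus where this equals $x$ is precisely the no-horizontal-specialization locus). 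On this locus, $\Phi$ is a bijection onto $\RZ{B}{A}$: surjectivity follows because Proposition \ref{rz_sheaf} tells us every $x \in \RZ{B}{A}$ has a unique minimal generalization $y \in \eta(\Spec{B})$ and $\Oh_{\RZ{B}{A},x}$ is the preimage of a valuation ring $R \subset \Oh_{\RZ{B}{A}, y}/\mathfrak{m}_y = k(y)$, and this valuation ring together with the point $y$ is exactly the data of a point of $\Spa{B}{A}$; injectivity follows since a point of $\RZ{B}{A}$ is, by the same proposition, determined by this valuation-ring datum. Finally, since both sides are spectral and $\Phi$ restricted to the no-horizontal-specialization locus is a continuous spectral bijection, and one checks it is also open onto its image (again reducing to the level of each modification $X_i$, where $\Spa{(X_i)}{} \to X_i$ is known to be a quotient-type map with the right behaviour on quasi-compact opens, cf. Proposition \ref{prop:spectral}), it is a homeomorphism.

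I expect the main obstacle to be the careful bookkeeping around the horizontal-specialization locus: one must show that the condition "no horizontal specialization" on $\Spa{B}{A}$ matches up, modification by modification, with "the valuative criterion produces a genuine point rather than a point of the boundary added by passing to a convex subgroup of the value group," and that this locus is pro-constructible so that Proposition \ref{p:spectral_limit}(c) and Proposition \ref{prop:spectral}(d) apply to control its closure and its image under the projections. The cleanest route is probably to first treat the affine case $X = \Spec{A}$, $Y = \Spec{B}$ with $\RZ{B}{A}$ presented as $\varprojlim \Spec{A'}$ over finitely generated $A$-subalgebras $A' \subset B$ that are "relatively proper" (the Riemann–Zariski presentation à la Temkin), identify each such $\Spec{A'}$-adic-spectrum fibre explicitly, and then glue; the general qcqs case then follows by a standard gluing argument over a finite affine cover, using separatedness of $f$ to control the overlaps.
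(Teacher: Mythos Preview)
The paper does not prove this proposition; it merely cites \cite[Corollary 3.4.7]{RZ}. So there is no in-paper argument to compare against, and your sketch should be judged on its own merits.

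The overall shape of your argument --- build a map $\Phi\colon \Spa{B}{A}\to\RZ{B}{A}$ via the valuative criterion, then show its restriction to the locus of points with no horizontal specialization is a homeomorphism --- is the right one, and it is essentially Temkin's approach. However, there are two genuine gaps. First, your bijectivity step invokes Proposition~\ref{rz_sheaf}, which in Temkin's paper is Corollary~3.5.2 and is \emph{derived from} the valuation description of $\RZ{B}{A}$, i.e.\ from the result you are trying to prove (Corollary~3.4.7). So as written the argument is circular; you need an independent reason why a point of the inverse limit $\varprojlim X_i$ determines a unique valuation on $B$ centered on $A$ with $c\Gamma=\Gamma$. (This is not hard --- one takes the colimit of the local rings $\Oh_{X_i,x_i}$ and checks it is a valuation ring of the appropriate residue field --- but it has to be done directly.) Second, the homeomorphism step is hand-waved: you assert that $\Phi$ restricted to the locus is ``open onto its image'' by reducing to each $X_i$, but the locus of points with no horizontal specialization is not open in $\Spa{B}{A}$, so this reduction needs care. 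The clean way is to show that the inverse map $\RZ{B}{A}\to\Spa{B}{A}$ is continuous by checking that the preimage of a rational subset $\{|f|\le|g|\ne 0\}$ is the image in $\RZ{B}{A}$ of an affine open in some modification. Finally, a minor point: your parenthetical claim that $c\Gamma_x=\Gamma_x$ automatically for discrete $B$ is false (take a valuation with $|b(x)|\le 1$ for all $b\in B$ but $\Gamma_x$ nontrivial); you correct yourself in the next clause, but the exposition should be cleaned up.
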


\begin{rmk} \label{rmk:rz}
\textit{(i)} Above we gave an interpretation of the Riemann--Zariski space as certain space of valuations with support on $Y$ and center on $X$. If $x \in \RZ{Y}{X}$ then the corresponding valuation is centered on the minimal generalization $y$ in $\eta(Y)$ with the valuation ring being the image of $\Oh_{\RZ{Y}{X}}$ in the residue field $\Oh_{Y, y} / \mathfrak{m}_y$ (see the proof of \cite[Corollary 3.5.2]{RZ}).

\textit{(ii)} The above statement is still true if we replace rings $A$ and $B$ by schemes $X$ and $Y$. Then $\Spa{Y}{X}$ is an adic space whose points correspond to all valuations with support on $Y$ and center on $X$. For a more detailed discussion on adic spaces associated to morphisms of schemes see \cite{Hubner}.
\end{rmk}

Our main construction will be in the spirit of the next proposition.

\begin{prop}{\cite[Corollary 3.3.6]{RZ}} \label{prop:temkin_retraction}
Let $A \subset B$ be rings with the discrete topology. Then the function $r : \Spa{B}{A} \to \RZ{B}{A}$ taking each $x \in \Spa{B}{A}$ to its minimal horizontal specialization is continuous.
\end{prop}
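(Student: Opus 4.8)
The plan is to bypass the inverse-limit description of $\RZ{B}{A}$ and argue entirely inside the ambient adic space. By Proposition~\ref{rz_subspace} the underlying space of $\RZ{B}{A}$ is the subspace of $\Spa{B}{A}$ consisting of the points with no proper horizontal specialization, and by Proposition~\ref{prop:horizontal} the minimal horizontal specialization $r(x)=x|_{c\Gamma_x}$ is itself such a point (a proper horizontal specialization of it would, by transitivity, be a horizontal specialization of $x$ strictly below the minimal one). So $r$ takes values in $\RZ{B}{A}$, and since the latter carries the subspace topology, it suffices to prove that $r\colon\Spa{B}{A}\to\Spa{B}{A}$ is continuous. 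Since the rational subsets of $\Spa{B}{A}$ form a basis of its topology and every rational subset is a finite intersection of the sets $U(f/g):=\{x:|f(x)|\le|g(x)|\neq 0\}$ ($f,g\in B$), I would simply check that each preimage $r\inv(U(f/g))$ is open.

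Next I would unwind the definition of $r$. Using $|h(x|_H)|=|h(x)|$ when $|h(x)|\in H$ and $0$ otherwise, with $H=c\Gamma_x$, one sees that $r(x)\in U(f/g)$ precisely when $|g(x)|\in c\Gamma_x$ and, whenever $|f(x)|\in c\Gamma_x$, also $|f(x)|\le|g(x)|$. The clause ``$|f(x)|\in c\Gamma_x$'' is redundant here: if $|f(x)|>|g(x)|$ while $|g(x)|\in c\Gamma_x$, then either $|f(x)|\ge 1$, so $|f(x)|$ is one of the generators of $c\Gamma_x$, or $|g(x)|<|f(x)|<1$, so $|f(x)|$ is caught between $|g(x)|\in c\Gamma_x$ and $1\in c\Gamma_x$; either way $|f(x)|\in c\Gamma_x$ by convexity of the characteristic subgroup. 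Hence
$$r\inv(U(f/g))=\{\,x\in\Spa{B}{A}\;:\;|f(x)|\le|g(x)|\ \text{and}\ |g(x)|\in c\Gamma_x\,\}.$$

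The decisive step is the identity
$$\{\,x\in\Spa{B}{A}:|g(x)|\in c\Gamma_x\,\}=\bigcup_{b\in B}U(1/bg),\qquad U(1/bg)=\{x:|(bg)(x)|\ge 1\},$$
which I would establish from the fact that $c\Gamma_x$ is the convex subgroup generated by the values $|b(x)|\ge 1$ ($b\in B$), together with the multiplicativity $|b_1(x)|\cdots|b_k(x)|=|(b_1\cdots b_k)(x)|$. For the inclusion $\subseteq$: if $|g(x)|\ge 1$ take $b=1$; if $|g(x)|<1$, then $|g(x)|\inv\in c\Gamma_x$ is $\ge 1$, hence bounded above by a finite product of generators equal to some $|b(x)|$, so $|(bg)(x)|\ge 1$. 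For $\supseteq$: $|(bg)(x)|\ge 1$ forces $|g(x)|\neq 0$, and if $|g(x)|<1$ then $|b(x)|\ge|g(x)|\inv>1$ is a generator, which places $|g(x)|\inv$ between $1$ and $|b(x)|$ in the convex group $c\Gamma_x$. Granting the identity, $r\inv(U(f/g))$ is the union over $b\in B$ of the sets $\{x:|f(x)|\le|g(x)|\}\cap U(1/bg)$; on such a set the condition $|(bg)(x)|\ge 1$ already gives $|g(x)|\neq 0$, so it coincides with $U(f/g)\cap U(1/bg)$, an intersection of two open subsets. Therefore $r\inv(U(f/g))$ is a union of open sets, $r$ is continuous, and we are done.

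The step I expect to be the real obstacle is the last displayed identity: the locus where $|g(x)|$ belongs to the characteristic subgroup is defined by an infinitary, a priori non-open condition, and the whole argument rests on trading it for the family of honest rational conditions $|(bg)(x)|\ge 1$; this is exactly where convexity of $c\Gamma_x$ and multiplicativity of the valuation are used. (Alternatively one could follow Temkin and realize $r$ as the limit of the centre maps $\Spa{B}{A}\to X_i$ attached by the valuative criterion of properness to the $\Spec B$-modifications $X_i$ of $\Spec A$; but that route still requires proving each centre map continuous and then identifying the limit with $r$, so I would favour the self-contained argument above.)
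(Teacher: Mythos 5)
Your proof is correct. The paper itself cites Temkin for this proposition without giving a proof, but the central identity you isolate,
\[
\{\,x\in\Spa{B}{A}:|g(x)|\in c\Gamma_x\,\}=\bigcup_{b\in B}\{\,x:|(bg)(x)|\ge 1\,\},
\]
is precisely the device the paper uses (for the analogous retraction $r\colon\Spa{\mathscr{A}}{\mathscr{A}^{\min}}\to\RZ{\mathscr{A}_\kappa}{\kappa}$, after lifting along $\mathscr{A}\twoheadrightarrow\mathscr{A}_\kappa$) in the continuity part of the proof of Lemma~\ref{lemma:sp_rz_properties}, so you have in effect reproduced the author's own local argument rather than Temkin's inverse-limit one. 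Two minor points: that $r(x)$ lands in $\RZ{B}{A}$ is most transparently seen by checking $c\Gamma_{r(x)}=\Gamma_{r(x)}$ directly --- any generator $|b(r(x))|$ of $\Gamma_{r(x)}$ comes from $|b(x)|\in c\Gamma_x$, hence is squeezed between $P^{-1}$ and $P$ for some product $P$ of $\ge 1$ generators of $c\Gamma_x$, and each such generator is already a generator of $c\Gamma_{r(x)}$, so convexity gives $|b(r(x))|\in c\Gamma_{r(x)}$ --- which sidesteps appealing to transitivity of horizontal specialization; and your reduction to the subsets $U(f/g)$ uses that for discrete $B$ these sets generate the topology of $\Spa{B}{A}$ as a subspace of $\Spv{B}$, which is indeed the case. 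Neither remark affects correctness.
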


In particular, if $B \simeq k$ is a field, then we have a homeomorphism $ \RZ{k}{A} \simeq \Spa{k}{A}$. Thanks to Nagata's theorem on the existence of compactifications we conclude what follows.

\begin{cor}\label{rz_comp}
Let $k$ be a field and $X$ a separated scheme of finite type over $k$. Then $\RZ{X}{k} \simeq \lim \bbar{X_i}$, where the limit goes over all $k$-compactifications of $X$. 
\end{cor}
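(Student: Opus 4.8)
The plan is to show that the system of $k$-compactifications of $X$ is cofinal in the system of $X$-modifications of $\Spec{k}$ defining $\RZ{X}{k}$; granting this, the asserted isomorphism of locally ringed spaces follows immediately, since an inverse limit over a cofinal subsystem agrees with the inverse limit over the whole system. Throughout I read $\RZ{X}{k}$ as $\RZ{X}{\Spec{k}}$ in the sense of the definition and of Remark~\ref{rmk:rz}(ii): it is the inverse limit, over all proper $k$-schemes $Z$ equipped with a dominant $k$-morphism $X\to Z$, of these schemes with their structure sheaves.

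First I would check that a $k$-compactification is indeed such an $X$-modification: if $j\colon X\hookrightarrow\bbar{X}$ is a schematically dense open immersion with $\bbar{X}$ proper over $k$, then $\bbar{X}$ is proper over $k$ and $j$ has dense image, hence is dominant. The $k$-compactifications moreover form a cofiltered system: given $\bbar{X}_1$ and $\bbar{X}_2$, the scheme-theoretic closure of the image of $X$ under the immersion $X\to\bbar{X}_1\times_k\bbar{X}_2$ is again a $k$-compactification mapping to both. Also, between any two $X$-modifications of $\Spec{k}$ there is at most one $\Spec{k}$-morphism compatible with the maps out of $X$, because the target is separated over $k$ and $X$ is schematically dense in the source; so the index categories are essentially posets, and cofinality of the subsystem reduces to the single lifting assertion below.

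The core step is: given an arbitrary $X$-modification $g\colon Z\to\Spec{k}$ with dominant $f\colon X\to Z$, produce a $k$-compactification $\bbar{X}$ of $X$ together with a $\Spec{k}$-morphism $\bbar{X}\to Z$ restricting to $\mathrm{id}_X$. Here Nagata's compactification theorem enters. The morphism $f$ is separated (because $X/k$ is separated), of finite type (because $X/k$ is of finite type and $Z/k$ is separated), and $Z$ is Noetherian; so Nagata gives a factorization $X\hookrightarrow T\to Z$ with $X\hookrightarrow T$ an open immersion and $T\to Z$ proper. Let $\bbar{X}\subset T$ be the scheme-theoretic closure of $X$ in $T$ — legitimate since $X\to T$ is quasi-compact, and making $X\hookrightarrow\bbar{X}$ a schematically dense open immersion. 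As $\bbar{X}\hookrightarrow T$ is a closed immersion, the composite $\bbar{X}\to T\to Z\to\Spec{k}$ is proper, so $\bbar{X}$ is a $k$-compactification of $X$; and $\bbar{X}\to Z$ (restricting to $f$ on $X$) is the desired morphism of $X$-modifications.

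Cofinality then yields $\RZ{X}{k}=\varprojlim_{Z}Z\simeq\varprojlim_i\bbar{X}_i$ as locally ringed spaces, where the latter limit runs over the $k$-compactifications of $X$; in particular the underlying spaces are homeomorphic. The step most likely to hide subtleties is the passage from the open-immersion compactification furnished by Nagata to a schematically dense one, together with confirming that the resulting $\bbar{X}\to Z$ genuinely is a morphism in the system of $X$-modifications — but by the uniqueness remark above it is forced to be the unique extension of $\mathrm{id}_X$. The remaining verifications (permanence of ``proper'' and ``separated'', cofiltration of the two systems, and the standard fact that cofinal subsystems compute the same limit) are routine.
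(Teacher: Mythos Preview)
Your proof is correct and matches the paper's approach: the paper gives no explicit argument beyond the sentence ``Thanks to Nagata's theorem on the existence of compactifications we conclude what follows,'' and your cofinality argument via Nagata is precisely the intended justification. One minor imprecision: your uniqueness claim is stated for morphisms ``between any two $X$-modifications,'' but $X$ is only dominant (not a priori schematically dense) in a general $X$-modification; however, you only apply uniqueness when the source is a $k$-compactification, where $X$ \emph{is} schematically dense by definition, so the argument goes through.
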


We finish this section with the proposition that captures the interplay of Riemann--Zariski spaces and closures of single points on analytic adic spaces.

\begin{prop}\label{prop:rz_as_closure}
 Let $K$ be a nonarchimedean field. Let $X$ be an analytic adic space of $^+$weakly finite type and proper over $\Spa{K}{\Oh_K}$ and let $x \in X$ be any point. Then the points on the topological closure $\bbar{\{x\}}$ of $x$ are in natural bijection with points on $\RZ{\kappa(x)}{\kappa}$, where $\kappa$ is the residue field of $K$.
\end{prop}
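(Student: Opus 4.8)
The plan is to identify the closure $\bbar{\{x\}}$ inside $X$ with a Riemann--Zariski space built from the residue field of $x$, using the specialization theory recalled above. First I would observe that since $X$ is qcqs it is spectral, and $\{x\}$ is trivially pro-constructible, so by Proposition~\ref{prop:spectral}(d) the closure $\bbar{\{x\}}$ consists exactly of the specializations of $x$. As $X$ is analytic over $\Spa{K}{\Oh_K}$, every specialization of $x$ is vertical (there are no horizontal ones on analytic adic spaces), so Proposition~\ref{prop:vert_spec} gives a canonical order-preserving bijection between $\bbar{\{x\}}$ and the underlying set of $\Spa{\kappa(x)}{\kappa(x)^+}$, where $\kappa(x)^+$ is the image of $\Oh_{X,x}^+$ in $\kappa(x)$.

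Next I would pin down $\kappa(x)^+$: since $X \to \Spa{K}{\Oh_K}$ is of ${}^+$weakly finite type and proper, the image of $\Oh_K$ in $\kappa(x)$ lands in the maximal ideal (rank-one valuation, so the image is $\kappa$ up to the residue quotient), and the valuative criterion of Proposition~\ref{prop:adic_vcp} forces $\kappa(x)^+$ to dominate — in fact equal — the valuation ring cut out by requiring a center over $\Spec{\kappa}$. Concretely, properness plus ${}^+$weak finite type says that any valuation on $\kappa(x)$ that is a specialization of $x$ must have center on the (proper) special-type structure, which is precisely the condition defining points of $\RZ{\kappa(x)}{\kappa}$. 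Combined with the homeomorphism $\RZ{k}{A} \simeq \Spa{k}{A}$ for $k$ a field (the remark following Proposition~\ref{prop:temkin_retraction}, applied with $k = \kappa(x)$, $A = \kappa$), this yields $\bbar{\{x\}} \simeq \Spa{\kappa(x)}{\kappa(x)^+} \simeq \Spa{\kappa(x)}{\kappa} \simeq \RZ{\kappa(x)}{\kappa}$ as topological spaces, and the bijection is natural in $x$ by construction of all the maps involved.

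The main obstacle I anticipate is the identification $\kappa(x)^+ = \kappa$-valuation-ring step: one must verify that the valuation ring $k(x)^+ \subset k(x)$ attached to $x$, after passing to $\kappa(x)$, has $\kappa$ exactly as its "minimal" subring, i.e. that $X$ being proper (not merely separated and of ${}^+$weakly finite type) over $\Spa{K}{\Oh_K}$ guarantees every vertical specialization of $x$ stays in $X$ \emph{and} that no extra constraint beyond "center over $\Spec\kappa$" survives. This is where Proposition~\ref{prop:adic_vcp} does the work: given a valuation ring $R$ on $\kappa(x)$ containing (the image of) $\kappa$, one lifts it to a diagram $\Spa{\what{\kappa(x)}}{R} \to \Spa{K}{\Oh_K}$ filling in against $X$, and the unique diagonal lift produces the corresponding point of $\bbar{\{x\}}$; conversely every specialization arises this way. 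Once this correspondence is set up, checking it is a homeomorphism (both sides are spectral, the map is spectral and bijective) and that it matches the Temkin description of $\RZ{\kappa(x)}{\kappa}$ is routine, using Proposition~\ref{p:spectral_limit} and Corollary~\ref{rz_comp} to present $\RZ{\kappa(x)}{\kappa}$ as a limit over $\kappa$-compactifications if a more explicit model is desired.
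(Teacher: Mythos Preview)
Your approach---assign to each specialization $y$ of $x$ the valuation ring $R_y \subset \kappa(x)$ via Proposition~\ref{prop:vert_spec}, then invoke the valuative criterion (Proposition~\ref{prop:adic_vcp}) for bijectivity---is exactly the paper's two-sentence proof. The attempt in your second paragraph to pin down $\kappa(x)^+$ as literally equal to $\kappa$ is an unnecessary and imprecisely stated detour ($\kappa(x)^+$ is the image of the local $A^+$, depends on the affinoid neighborhood, and is not a valuation ring), but you correctly identify this as the crux and resolve it in your third paragraph via the direct lifting argument, which is precisely what the paper does.
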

\begin{proof}
This is an easy corollary of \cite[Section 1.3]{HuEt}. The map $\rho: \bbar{\{x\}} \to \RZ{\kappa(x)}{\kappa}$ assigns to a point $y$ the valuation $v_y \in \RZ{\kappa(x)}{\kappa}$ corresponding to the valuation ring $R_y$ given in Proposition \ref{prop:vert_spec}. Bijectivity of this map is a result of the adic valuative criterion for properness (Proposition \ref{prop:adic_vcp}).
\end{proof}

\section{Compactifications through formal models}

In this section, we construct Huber's universal compactifications of rigid spaces in a new way. We use the theories of formal models and relative Riemann--Zariski spaces.

Throughout this section, we fix a non-archimedean field $K$ and by $\Oh_K$ and $\kappa$ we mean its ring of integers and residue field, respectively. The notation that we use is described in \S\ref{notation}.

\subsection{Statement of the main theorem}
\begin{definition} \label{def:main}
A \textit{compactified formal scheme} over $\Oh_K$ is a pair $(\mathfrak{X}, \overline{\mathfrak{X}}_{s})$, where $\mathfrak{X}$ is an admissible formal scheme over $\Oh_K$ and $\overline{\mathfrak{X}}_{s}$ is a schematic $\kappa$-compactification of the special fiber $\mathfrak{X}_s = \mathfrak{X} \otimes_{\Oh_K} \kappa$. Morphisms of compactified formal schemes $f: (\mathfrak{X'}, \overline{\mathfrak{X'}}_{s}) \to (\mathfrak{X}, \overline{\mathfrak{X}}_{s})$ are commutative diagrams of the form:
\begin{center}
    \begin{tikzcd}
    \mathfrak{X}'  \arrow[]{d}{} \arrow[hookleftarrow]{r}{} & \mathfrak{X}'_{s} \arrow[hook]{r}{} \arrow[]{d}{}&\overline{\mathfrak{X}'}_{s}  \arrow[]{d}{} \\
    \mathfrak{X} \arrow[hookleftarrow]{r}{} & \mathfrak{X}_{s} \arrow[hook]{r}{}& \overline{\mathfrak{X}}_{s} 
    \end{tikzcd}
\end{center}
where arrows going to the left are natural closed immersions and arrows going to the right are open embeddings into compactifications.

For a rigid space $X$ we define the category $\mathcal{M}(\overline{X})$ of \textit{compactified admissible formal models} of $X$ to consist of compactified formal schemes $(\mathfrak{X}, \overline{\mathfrak{X}}_{s})$, such that $\mathfrak{X} \in \mathcal{M}(X)$ is an admissible formal model of $X$. The category $\mathcal{M}(\bbar{X})$ is a cofiltering poset (see Lemma \ref{lemma:cofiltering} below), and hence we can set
$$
\ttilde{X} := \lim\limits_{\substack{\longleftarrow \\ (\mathfrak{X}, \overline{\mathfrak{X}}_{s}) \in \mathcal{M}(\overline{X})}} \overline{\mathfrak{X}}_{s},
$$
where the limit is taken in the category of topological spaces. The space $\ttilde{X}$ can be equipped with a sheaf of rings $\Oh_{\ttilde{X}}$ in the following way. Let $(\mathfrak{X}, \overline{\mathfrak{X}}_s) \in \mathcal{M}(\overline{X})$ together with open immersion $\ttilde{j}: \mathfrak{X}_s \to \overline{\mathfrak{X}}_s$. We define $\Oh_{(\mathfrak{X}, \bbar{\mathfrak{X}}_s)}$ to be the pullback in the category of sheaves of rings on $\overline{\mathfrak{X}}_{s}$:
\begin{center}
    \begin{tikzcd}
    \Oh_{(\mathfrak{X}, \bbar{\mathfrak{X}}_s)} \arrow[]{r}{} \arrow[hook]{d}{} \arrow[dr, phantom, "\square"]
    & \Oh_{\overline{\mathfrak{X}}_s} \arrow[hook]{d}{}\\
    \ttilde{j}_* \Oh_{\mathfrak{X}} \arrow[]{r}{}& \ttilde{j}_* \Oh_{\mathfrak{X}_s}
    \end{tikzcd}
\end{center}
It is easy to check that a morphism of compactified formal schemes $f: (\mathfrak{X'}, \overline{\mathfrak{X}'}_{s}) \to (\mathfrak{X}, \overline{\mathfrak{X}}_{s})$ induces a canonical map of sheaves $f\inv \Oh_{(\mathfrak{X}, \bbar{\mathfrak{X}}_s)} \to \Oh_{(\mathfrak{X}', \bbar{\mathfrak{X}}'_{s})}$. Then we define 
\[
    \Oh_{\ttilde{X}} = \lim\limits_{\longrightarrow} \Oh_{(\mathfrak{X}, \bbar{\mathfrak{X}}_s)}
\]
(this is a small abuse of notation, as we should take the limit of preimages of $\Oh_{(\mathfrak{X}, \bbar{\mathfrak{X}}_s)}$ along natural projections). 
\end{definition}

\begin{rmk}\label{rmk:is_formal_scheme}
    The locally ringed space $(\bXX_s, \Oh_{(\mathfrak{X}, \overline{\mathfrak{X}}_{s})})$ is actually a formal scheme. See Appendix \ref{section:app} for a proof of a more general statement.
\end{rmk}

Let $j: X \to \bbar{X}$ be the natural open immersion of $X$ into its universal compactification (in the sense of Theorem \ref{thm:comp}). Our goal is to prove the following theorem.
\begin{thm}\label{thm:main}
Let $X$ be a quasi-compact separated rigid space. Then there is a naturally defined isomorphism of locally ringed spaces
\[
    \varphi: \left(\overline{X}, \Oh_{\overline{X}}^+\right) \stackrel{\sim}\to \left(\ttilde{X}, \Oh_{\ttilde{X}}\right).
\]
\end{thm}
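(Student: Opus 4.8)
The strategy is to construct the isomorphism $\varphi$ by comparing both sides point-by-point, using the fact (Theorem~\ref{thm:comp}) that $\bbar X$ is \emph{characterized} by conditions (a)--(d), together with the Riemann--Zariski description of the fibers of the specialization map. First I would recall that $X = \varprojlim_{\XX \in \mathcal M(X)} \XX$ as locally ringed spaces (Theorem~\ref{thm:adic_as_limit}), and that $\bbar X$ sits in the larger picture as follows: every point $x \in \bbar X$ is a specialization of a unique point $x_0 \in X$ (condition (c) plus the fact that only vertical specializations occur on the analytic space $X$), and the vertical specializations of $x_0$ are classified by $\Spa{\kappa(x_0)}{\kappa(x_0)^+}$ via Proposition~\ref{prop:vert_spec}. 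By Huber's Theorem~\ref{thm:comp}, the points of $\bbar X$ over $x_0$ are exactly those of rank $\ge 1$, and since $\bbar X \to \Spa{K}{\Oh_K}$ is partially proper, an argument via the valuative criterion (as in Proposition~\ref{prop:rz_as_closure}) identifies the closure of $x_0$ in $\bbar X$ with $\RZ{\kappa(x_0)}{\kappa}$.

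On the formal side, I would analyze $\ttilde X = \varprojlim_{(\XX,\bbar\XX_s)} \bbar\XX_s$ similarly. Fixing a point $\tilde x \in \ttilde X$ lying over a point of $X$ (via the specialization maps $X \to \XX_s \hookrightarrow \bbar\XX_s$), its preimage in the tower is governed by the special fibers: over a point $\xi_0 = \spmap(x_0) \in \XX_s$ with residue field $\kappa(\xi_0)$, the possible limits in the compactifications $\bbar\XX_s$ of the $\XX_s$ form, by Corollary~\ref{rz_comp}, the space $\RZ{\kappa(\xi_0)}{\kappa}$; and as $\XX$ ranges over $\mathcal M(X)$, the residue field $\kappa(\xi_0)$ ranges so that $\varinjlim_\XX \kappa(\spmap(x_0))$ recovers $\kappa(x_0)$ (this is where Remark~\ref{rmk:iota} and Theorem~\ref{thm:adic_as_limit} enter, comparing residue fields of the formal models with the residue field of the adic point). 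Passing to the limit over $\XX$ and using that $\RZ{-}{\kappa}$ commutes with the relevant filtered colimit of fields (Temkin's formalism, \S\ref{section:rz}), one gets $\RZ{\kappa(x_0)}{\kappa}$ again. This produces a bijection on points; I would then upgrade it to a homeomorphism using the spectrality of both sides and Proposition~\ref{p:spectral_limit}, checking that constructible sets match (both towers are cofiltered with spectral transition maps, and the bijection is compatible with the projections).

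For the structure sheaves, the key local computation is: at $\tilde x \leftrightarrow x$, the stalk $\Oh_{\ttilde X, \tilde x} = \varinjlim \Oh_{(\XX,\bbar\XX_s),\,\xi}$ should equal $\Oh_{\bbar X, x}^+$. By Proposition~\ref{rz_sheaf} applied to the Riemann--Zariski tower, $\Oh_{(\XX,\bbar\XX_s),\xi}$ is the preimage in $\mathcal M_{\RZ{\kappa(\xi_0)}{\kappa},\xi} \cong \Oh_{\XX_s,\xi_0}$ of a valuation ring of its residue field; but by the pullback-square definition of $\Oh_{(\XX,\bbar\XX_s)}$, functions in it are exactly those in $\Oh_\XX$ near $\spmap(x_0)$ whose restriction to $\XX_s$ extends to $\bbar\XX_s$. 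Taking the colimit over $\XX$ and using $\varinjlim_\XX \Oh_{\XX,\spmap(x_0)} = \Oh_{X,x_0}^+$ (Theorem~\ref{thm:adic_as_limit}), one sees that a germ lies in $\Oh_{\ttilde X,\tilde x}$ iff it lies in $\Oh_{X,x_0}^+$ and, at the level of the residue field $\kappa(x_0)$, its value lies in the valuation ring $R_{x}\subset\kappa(x_0)$ cut out by $x$; by Proposition~\ref{prop:vert_spec} this is precisely the description of $\Oh_{\bbar X,x}^+ = \Oh^+_{X,x_0}\times_{\kappa(x_0)} R_x$. Matching these stalks compatibly across the homeomorphism yields the isomorphism of locally ringed spaces.

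The main obstacle I anticipate is the interchange of limits: one must pass from the \emph{two} nested limits on the $\ttilde X$ side (first over compactifications $\bbar\XX_s$ of a fixed $\XX_s$, then over $\XX \in \mathcal M(X)$) to a single Riemann--Zariski space over the colimit field $\kappa(x_0)$, and simultaneously control how the specialization maps $\spmap\colon X \to \XX$ vary with $\XX$ so that the fiber descriptions glue. Concretely, the delicate point is showing $\varinjlim_{\XX} \RZ{\kappa(\spmap(x_0))}{\kappa} \cong \RZ{\kappa(x_0)}{\kappa}$ \emph{compatibly with the valuation-ring data}, and that the resulting map is not merely a continuous bijection but a homeomorphism identifying structure sheaves — this is where the spectral-space machinery (Propositions~\ref{prop:spectral} and~\ref{p:spectral_limit}) and Temkin's comparison between Riemann--Zariski spaces and adic spaces (Propositions~\ref{rz_subspace} and~\ref{prop:temkin_retraction}) must be deployed carefully, presumably packaged in an auxiliary lemma (the paper's Lemma~\ref{lemma:sp_rz_properties}) handling the horizontal/vertical specialization bookkeeping.
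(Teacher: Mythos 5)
Your proposal follows essentially the same route as the paper: identify the fiber of $\varphi$ over a point $x_0\in X$ with the Riemann--Zariski space $\RZ{\kappa(x_0)}{\kappa}$ on both sides (on $\bbar X$ via the valuative criterion as in Proposition~\ref{prop:rz_as_closure}, on $\ttilde X$ via Corollary~\ref{rz_comp} and the identification $\varinjlim_\XX\kappa(\spmap(x_0))\simeq\kappa(x_0)$), then upgrade the resulting bijection to a homeomorphism using spectral-space arguments, and finally compare stalks via the pullback square defining $\Oh_{(\XX,\bbar\XX_s)}$ together with Proposition~\ref{rz_sheaf} and the description $\Oh^+_{\bbar X,x}=\pi\inv(R_x)$. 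You also correctly anticipate that the horizontal/vertical-specialization bookkeeping is delegated to Lemma~\ref{lemma:sp_rz_properties}, which is exactly where the paper's technical work (via the auxiliary map $\spmap_{\RZop}$ through $\Spa{\mathscr{A}}{\mathscr{A}^{\min}}$) lives. One minor slip: the limit $\RZ{\kappa(x_0)}{\kappa}\simeq\varprojlim_\XX\RZ{\kappa(\spmap(x_0))}{\kappa}$ is an inverse limit, not $\varinjlim$ as you wrote; the paper justifies this carefully via \cite{ALY} in Lemma~\ref{l:rz}.
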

See Construction \ref{const:map} below for a description of $\varphi$ at the level of points. We will first prove that this map is a homeomorphism of topological spaces (Proposition \ref{prop:homeo}). Then, we notice that using this isomorphism, both sheaves $\Oh_{\ttilde{X}}$ and $\Oh_{\bbar{X}}^+$ can be regarded as subsheaves of $j_* \Oh_{X}^+$, so to conclude the proof, we show that they are equal (Proposition \ref{prop:iso_sheaves}).

\subsection{Comparison of topological spaces}
\begin{constr}\label{const:map}
Let $X$ be a quasi-compact separated rigid space and pick $(\mathfrak{X}, \overline{\mathfrak{X}}_{s}) \in \mathcal{M}(\overline{X})$. We are going to construct a map $\bbar{\spmap}: \overline{X} \to \overline{\mathfrak{X}}_{s}$ extending the construction in Construction \ref{const:spec}. Let us take any $x \in \bbar{X}$. Then by Theorem \ref{thm:comp} there exists a vertical generalization of $x$ that is contained in $X$ (recall that all specializations on analytic adic spaces are vertical). Let $x_0$ be minimal (i.e. most special) among those. Observe that by Proposition \ref{prop:vert_spec} $x$ defines a valuation ring on $R_x \subset \kappa(x_0)$. Moreover by Remark \ref{rmk:iota} there exists a natural homomorphism $\iota: \kappa(\spmap(x_0)) \to \kappa(x_0)$. This gives us a commutative diagram
\begin{center}
    \begin{tikzcd}
    \Spec{\kappa(x_0)} \arrow[]{r}{} \arrow[]{d}{} & 
    \overline{\mathfrak{X}_s} \arrow[]{d}{} \\
    \Spec{R_x} \arrow[]{r}{} \arrow[dashed]{ur}{\theta}& 
    \Spec{\kappa}
    \end{tikzcd}
\end{center}
Here, there exists exactly one $\theta$ by the valuative criterion of properness. We set $\bbar{\spmap}(x)$ to be the image of a closed point under $\theta$. Moreover, observe, that if $x \in X$ then $x_0 = x$ and thus we obtain $\spmap(x) = \bbar{\spmap}(x)$.

Since $\ttilde{X} = \lim\limits_{\longleftarrow} \overline{\mathfrak{X}}_{s}$ we obtain the desired map of sets $\varphi: \overline{X} \to \ttilde{X}$.
\end{constr}

In this section, we focus on proving the following statement.

\begin{prop}\label{prop:homeo}
For a quasi-compact, separated rigid space $X$ the map $\varphi: \overline{X} \to \ttilde{X}$ constructed above is a homeomorphism.
\end{prop}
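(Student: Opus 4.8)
The strategy is to exhibit $\varphi$ as a morphism of cofiltered limits of spectral spaces and then check the hypotheses of the limit formalism in Proposition~\ref{p:spectral_limit}. Recall from Theorem~\ref{thm:adic_as_limit} that $(X, \Oh_X^+) \simeq \varprojlim_{\mathfrak{X} \in \mathcal{M}(X)} \mathfrak{X}$; I would first establish the analogous presentation $\bbar{X} \simeq \varprojlim_{(\mathfrak{X},\bbar{\mathfrak{X}}_s)} (\text{something over } \bbar{\mathfrak{X}}_s)$ by decomposing the limit. Concretely, $\mathcal{M}(\bbar{X})$ fibers over $\mathcal{M}(X)$, the fiber over $\mathfrak{X}$ being the poset of $\kappa$-compactifications of $\mathfrak{X}_s$; by Corollary~\ref{rz_comp} the limit over that fiber is $\RZ{\mathfrak{X}_s}{\kappa}$ (at the level of spaces, glued over an affine cover from the affine-target $\RZ{-}{\kappa}$ appearing in Proposition~\ref{prop:temkin_retraction}). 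So $\ttilde{X} \simeq \varprojlim_{\mathfrak{X}} \RZ{\mathfrak{X}_s}{\kappa}$. On the other side, using that each point of $\bbar{X}$ is a vertical specialization of a unique minimal point $x_0 \in X$ (Theorem~\ref{thm:comp}(c) together with Propositions~\ref{prop:vert_spec} and \ref{p:v_totally_orderd}), the map $x \mapsto (x_0, R_x)$ realizes $\bbar{X}$ as a subspace of "pairs (point of $X$, valuation ring of $\kappa(x_0)$ over $\kappa$)", and Proposition~\ref{prop:rz_as_closure} identifies the fiber over $x_0$ with $\RZ{\kappa(x_0)}{\kappa}$. Compatibility of $\spmap$ with transition maps (from Construction~\ref{const:spec}, via the homomorphisms $\iota$ of Remark~\ref{rmk:iota} and the valuative criterion used to build $\bbar{\spmap}$) makes $\varphi$ a map of cofiltered systems; being compatible it induces a continuous map on limits.

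To prove $\varphi$ is a homeomorphism I would verify three things. \emph{Continuity and spectrality of $\varphi$}: each $\bbar{\spmap} : \bbar{X} \to \bbar{\mathfrak{X}}_s$ is continuous because it is assembled from the classical $\spmap$ and the retraction to Riemann--Zariski spaces (Proposition~\ref{prop:temkin_retraction}); since $\bbar{X}$ is qcqs hence spectral, and the $\bbar{\mathfrak{X}}_s$ are spectral with spectral transition maps, $\varphi$ is a spectral map of spectral spaces by Proposition~\ref{p:spectral_limit}(a) and Proposition~\ref{prop:spectral}(b). \emph{Injectivity}: if $\varphi(x) = \varphi(x')$ then all $\bbar{\spmap}(x) = \bbar{\spmap}(x')$, in particular already at the level of $X$ the points $x_0, x_0'$ have the same image under every $\spmap$, so $x_0 = x_0'$ by Theorem~\ref{thm:adic_as_limit}; then the two valuation rings $R_x, R_{x'} \subset \kappa(x_0)$ determine the same point of $\RZ{\kappa(x_0)}{\kappa} = \varprojlim \bbar{\mathfrak{X}_i}$, hence coincide, so $x = x'$ by the bijection of Proposition~\ref{prop:rz_as_closure}. \emph{Surjectivity}: a point of $\ttilde{X}$ is a compatible family $(\xi_{(\mathfrak{X},\bbar{\mathfrak{X}}_s)})$; its image under the projections to $\mathcal{M}(X)$ gives a compatible family of points of the $\mathfrak{X}$, i.e. a point $x_0 \in X$ by Theorem~\ref{thm:adic_as_limit}, and the remaining data is a compatible family of points of the $\kappa$-compactifications of $\mathfrak{X}_s$ lying over $\spmap(x_0)$, i.e. (via Corollary~\ref{rz_comp} and Proposition~\ref{prop:rz_as_closure}) a point of $\RZ{\kappa(x_0)}{\kappa} \simeq \bbar{\{x_0\}} \cap$ (fiber), which produces the desired $x \in \bbar{X}$ with $\varphi(x) = \xi$. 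A continuous spectral bijection of spectral spaces is a homeomorphism (e.g.\ by Proposition~\ref{prop:spectral}(a), since it is then a continuous bijection of compact Hausdorff spaces in the constructible topologies, and one checks it is also open), which finishes the argument.

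The step I expect to be the main obstacle is making the identification $\ttilde{X} \simeq \varprojlim_{\mathfrak{X}} \RZ{\mathfrak{X}_s}{\kappa}$ precise and \emph{functorial in $\mathfrak{X}$}, including checking that the transition maps of the Riemann--Zariski systems are compatible with the transition maps $\mathfrak{X}' \to \mathfrak{X}$ in $\mathcal{M}(X)$ and with the specialization maps, i.e.\ that the square relating $\bbar{\spmap}$ for $(\mathfrak{X}',\bbar{\mathfrak{X}'}_s)$ and for $(\mathfrak{X},\bbar{\mathfrak{X}}_s)$ genuinely commutes. This requires unwinding Construction~\ref{const:map}: one must verify that the valuation ring $R_x$ is transported correctly under the scheme morphisms $\bbar{\mathfrak{X}'}_s \to \bbar{\mathfrak{X}}_s$ (which cover $\mathfrak{X}'_s \to \mathfrak{X}_s$), and that the center prescribed by the valuative criterion on the $\mathfrak{X}'$-side maps to the one on the $\mathfrak{X}$-side. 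There is also the bookkeeping subtlety that $\mathcal{M}(\bbar{X})$ is not literally a product of $\mathcal{M}(X)$ with a Riemann--Zariski poset — a compactification of $\mathfrak{X}'_s$ need not come from one of $\mathfrak{X}_s$ — so the cofinality argument showing that the double limit may be computed as an iterated limit needs Lemma~\ref{lemma:cofiltering} and a short cofinality check (every compactification of $\mathfrak{X}_s$ admits, compatibly, a compactification of $\mathfrak{X}'_s$ dominating it, via Nagata). Once these compatibilities are pinned down, the homeomorphism assertion follows formally from the spectral-space limit package.
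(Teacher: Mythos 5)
Your overall outline --- decompose $\ttilde{X}$ as an iterated limit over formal models and their compactifications, identify the inner limit with Riemann--Zariski spaces, prove bijectivity, then invoke the spectral-space formalism --- matches the paper's architecture (the RZ reformulation is Proposition~\ref{prop:sheaf_tilde_rz}, bijectivity is Lemma~\ref{l:rz} together with the corollary that follows). But the final step of your plan contains a genuine error that cannot be patched by routine checking.

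The assertion that a continuous spectral bijection of spectral spaces is a homeomorphism is false. Take any spectral space $Y$ with a nontrivial specialization relation and consider the identity map from $Y$ endowed with the constructible topology to $Y$ with its given topology: it is continuous, spectral, and bijective, and in the constructible topologies it is indeed a homeomorphism of compact Hausdorff spaces --- yet it is not a homeomorphism of spectral spaces unless $Y$ was already Hausdorff. Your parenthetical ``one checks it is also open'' is not a formality: for a bijection, openness is literally equivalent to being a homeomorphism, so this remark assumes what must be proven. This is in fact the technical heart of the proposition. The paper proves that $\varphi$ is closed: by spectrality of $\varphi$ the image $\varphi(Z)$ of a closed set is pro-constructible, so Proposition~\ref{p:spectral_limit}(c) gives $\varphi(Z)=\bigcap_\alpha\pr_\alpha\inv(\bbar{\spmap}_\alpha(Z))$, and one concludes provided each $\bbar{\spmap}_\alpha\colon\bbar{X}\to\bbar{\mathfrak{X}}_{\alpha,s}$ is closed. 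That global closedness (Lemma~\ref{f_properties}) reduces to the affinoid case, where closedness of $\spmap_{\RZop}$ (Lemma~\ref{lemma:sp_rz_properties}) is established by a nontrivial argument combining pro-constructibility with the factorization of specializations into a vertical followed by a horizontal part (Proposition~\ref{spec_factorization}). Your plan has no substitute for this step, and nothing in it forces the inverse of $\varphi$ to be continuous.

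A smaller gap concerns injectivity: you pass from ``$\bbar{\spmap}_\alpha(x)=\bbar{\spmap}_\alpha(x')$ for all $\alpha$'' to ``$\spmap_\alpha(x_0)=\spmap_\alpha(x_0')$ for all $\alpha$'' without justification. The point $\spmap_\alpha(x_0)$ is not directly visible from $\bbar{\spmap}_\alpha(x)$ inside $\bbar{\mathfrak{X}}_{\alpha,s}$; one needs to know that the minimal generalization of the system inside the special fibers recovers $\spmap_\alpha(x_0)$, which is the content of Lemma~\ref{lemma:min_gen_rz}(ii) (after passing to RZ spaces), or one argues as the paper does by producing a separating formal model via a blow-up and strict transforms. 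You also correctly flag the cofinality bookkeeping (Lemma~\ref{lemma:cofiltering}, Proposition~\ref{prop:sheaf_tilde_rz}) as a step needing care, but that is not where the real difficulty lies --- the closedness is.
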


Before we come to the proof we need to understand more about the space $\ttilde{X}$. We will play with spectral spaces quite a lot, so we naturally want to prove the following lemma.

\begin{lemma} \label{lemma:cofiltering}
The category $\mathcal{M}(\overline{X})$ is a cofiltering poset.
\end{lemma}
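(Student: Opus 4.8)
The plan is to unwind the definitions: $\mathcal{M}(\overline{X})$ consists of pairs $(\mathfrak{X}, \overline{\mathfrak{X}}_s)$ with $\mathfrak{X}$ an admissible formal model of $X$ and $\overline{\mathfrak{X}}_s$ a $\kappa$-compactification of $\mathfrak{X}_s$, with the evident morphisms. Recall first that $\mathcal{M}(X)$ itself is a cofiltering poset: any two admissible formal models are dominated by a common one (via the graph of the birational map followed by admissible blow-ups, as in Raynaud's theory / \cite{Bosch}), and there is at most one morphism between any two of them over $X$ since $X$ is (scheme-theoretically) dense. I would state this as the known input and then bootstrap from it.

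For the poset claim on $\mathcal{M}(\overline{X})$: given two morphisms $(\mathfrak{X}',\overline{\mathfrak{X}'}_s) \rightrightarrows (\mathfrak{X},\overline{\mathfrak{X}}_s)$, the underlying maps $\mathfrak{X}' \to \mathfrak{X}$ coincide (poset property of $\mathcal{M}(X)$), hence the maps $\mathfrak{X}'_s \to \mathfrak{X}_s$ coincide; then the two extensions $\overline{\mathfrak{X}'}_s \to \overline{\mathfrak{X}}_s$ agree because $\mathfrak{X}'_s$ is dense in $\overline{\mathfrak{X}'}_s$ (schematic density is part of the definition of $\kappa$-compactification) and $\overline{\mathfrak{X}}_s$ is separated, so two maps agreeing on a dense open of a reduced... more carefully: two morphisms of $\kappa$-schemes from a reduced scheme agreeing on a schematically dense open subscheme into a separated target are equal. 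So $\mathcal{M}(\overline{X})$ is a poset.

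For cofilteredness, given $(\mathfrak{X}_1,\overline{\mathfrak{X}_1}_s)$ and $(\mathfrak{X}_2,\overline{\mathfrak{X}_2}_s)$, first pick $\mathfrak{X}_3 \in \mathcal{M}(X)$ dominating both $\mathfrak{X}_1$ and $\mathfrak{X}_2$, giving maps of special fibers $\mathfrak{X}_{3,s} \to \mathfrak{X}_{i,s} \hookrightarrow \overline{\mathfrak{X}_i}_s$ for $i=1,2$. Consider the locally closed immersion $\mathfrak{X}_{3,s} \hookrightarrow \overline{\mathfrak{X}_1}_s \times_\kappa \overline{\mathfrak{X}_2}_s$ (using separatedness so this is an immersion) — actually one should take the scheme-theoretic image $Z$ of $\mathfrak{X}_{3,s}$ in $\overline{\mathfrak{X}_1}_s \times_\kappa \overline{\mathfrak{X}_2}_s$, which is proper over $\kappa$ and contains $\mathfrak{X}_{3,s}$ as a schematically dense open, hence is a $\kappa$-compactification of $\mathfrak{X}_{3,s}$; set $\overline{\mathfrak{X}_3}_s := Z$. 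The projections then furnish morphisms $(\mathfrak{X}_3,\overline{\mathfrak{X}_3}_s) \to (\mathfrak{X}_i,\overline{\mathfrak{X}_i}_s)$. One must check $\mathfrak{X}_{3,s} \to Z$ is an \emph{open} immersion and the diagram in Definition~\ref{def:main} commutes, which is straightforward from the construction. Similarly $\mathcal{M}(\overline{X})$ is nonempty: take any $\mathfrak{X} \in \mathcal{M}(X)$ (nonempty by Raynaud) and apply Nagata's compactification theorem to $\mathfrak{X}_s$, then replace by the scheme-theoretic closure of $\mathfrak{X}_s$ to ensure schematic density.

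The main obstacle I expect is purely bookkeeping: verifying that the scheme-theoretic image constructions genuinely produce \emph{open} (not merely locally closed) immersions $\mathfrak{X}_{i,s} \hookrightarrow \overline{\mathfrak{X}_i}_s$ and that everything is compatible over $\mathfrak{X}_3$ — one wants $\mathfrak{X}_{3,s}$ to be precisely the preimage of $\mathfrak{X}_{i,s}$ under $Z \to \overline{\mathfrak{X}_i}_s$ (or at least that it maps into it as an open piece), which follows since $Z \to \overline{\mathfrak{X}_i}_s$ restricted over the dense open $\mathfrak{X}_{i,s}$ is proper and an isomorphism over the generic points, combined with the diagonal being an immersion. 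No single step is deep; the care is in choosing the right scheme-theoretic images so that the morphism diagrams of Definition~\ref{def:main} literally commute.
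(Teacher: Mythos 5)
Your overall plan matches the paper's, but there is a genuine gap in the cofilteredness step that the paper's proof takes care to avoid. You take the scheme-theoretic image $Z$ of the map $\mathfrak{X}_{3,s}\to\overline{\mathfrak{X}_1}_s\times_\kappa\overline{\mathfrak{X}_2}_s$ and claim this map is a (locally closed) immersion ``using separatedness'', so that $\mathfrak{X}_{3,s}\hookrightarrow Z$ is an open immersion. That claim is false in general: the maps $\mathfrak{X}_{3,s}\to\mathfrak{X}_{i,s}$ are special fibers of admissible formal blow-ups, which are proper but typically \emph{not} injective (they contract the exceptional locus), so the product map is not injective, let alone an immersion. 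Separatedness of the targets does not save this — one would need at least one of the two factor maps to be an immersion, and neither is. Consequently $\mathfrak{X}_{3,s}\to Z$ need not be an open immersion and $(\mathfrak{X}_3, Z)$ need not be a compactified formal scheme at all. You flag this concern near the end but dismiss it as bookkeeping and offer a heuristic (``proper and an isomorphism over the generic points'') that does not actually resolve it.

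The paper's proof fixes exactly this point with one extra ingredient: before taking the scheme-theoretic image, one first \emph{chooses} (by Nagata) a compactification $j\colon\mathfrak{X}_{3,s}\hookrightarrow\bbar{\mathfrak{X}}_s$ and then maps $\mathfrak{X}_{3,s}$ into the triple product $\bbar{\mathfrak{X}}_s\times\overline{\mathfrak{X}_1}_s\times\overline{\mathfrak{X}_2}_s$ via $(j,s_1,s_2)$. Since the first coordinate $j$ is an open immersion and the other two land in separated schemes, this map \emph{is} a locally closed immersion (graph morphism followed by an open immersion), so its scheme-theoretic image $Y$ contains $\mathfrak{X}_{3,s}$ as a dense open, and $(\mathfrak{X}_3,Y)$ genuinely lies in $\mathcal{M}(\overline{X})$ and dominates both given objects. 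Your poset/no-parallel-arrows argument and the nonemptiness via Nagata are fine and essentially identical to the paper's.
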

\begin{proof}
Let $(\mathfrak{X}_1, \overline{\mathfrak{X}}_{1, s}), (\mathfrak{X}_2, \overline{\mathfrak{X}}_{2, s}) \in \mathcal{M}(\bbar{X})$. By \cite[Proposition II.1.3.1]{FK}, the category $\mathcal{M}(X)$ is cofiltering, and hence there exists $\mathfrak{X} \in \mathcal{M}(X)$ together with maps $s_i: \mathfrak{X} \to \mathfrak{X}_i$, where $s_i$ are admissible formal blow-ups. Pick any compactification $j: \mathfrak{X}_s \hookrightarrow \bbar{\mathfrak{X}}_s$. Now $(j, s_1, s_2)$ define a map $h: \mathfrak{X}_s \to \bbar{\mathfrak{X}}_s \times \overline{\mathfrak{X}}_{1, s} \times \overline{\mathfrak{X}}_{2, s}$. Let $Y$ be the schematic image of $h$. Then it is clear that $(\mathfrak{X}, Y) \in \mathcal{M}(\bbar{X})$ dominates $(\mathfrak{X}_1, \overline{\mathfrak{X}}_{i, s})$ for $i=1, 2$.

By the theorem of Nagata on the existence of compactifications the system $\mathcal{M}(\overline{X})$ is nonempty. Moreover, it has no parallel arrows thanks to separatedness and density assumptions.
\end{proof}

\begin{cor}
The space $\ttilde{X}$ is spectral.
\end{cor}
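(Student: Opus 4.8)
The plan is simply to invoke Proposition~\ref{p:spectral_limit}(a): a cofiltering limit of spectral spaces along spectral transition maps, formed in the category of topological spaces, is again spectral. By Lemma~\ref{lemma:cofiltering} the indexing category $\mathcal{M}(\bbar{X})$ is a cofiltering poset, and by Definition~\ref{def:main} the space $\ttilde{X}$ is, by construction, the limit of the diagram $(\mathfrak{X}, \bbar{\mathfrak{X}}_s) \mapsto \bbar{\mathfrak{X}}_s$ over this poset, taken in topological spaces. So it remains only to verify the two hypotheses of Proposition~\ref{p:spectral_limit}: that each $\bbar{\mathfrak{X}}_s$ is spectral, and that each transition map is spectral.

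For the first point, $\bbar{\mathfrak{X}}_s$ is a $\kappa$-compactification of $\mathfrak{X}_s$, hence proper, and in particular of finite type, over the field $\kappa$; thus it is a Noetherian scheme, and the underlying topological space of any Noetherian scheme (more generally, of any quasi-compact quasi-separated scheme) is spectral. For the second point, a morphism $(\mathfrak{X}', \bbar{\mathfrak{X}}'_s) \to (\mathfrak{X}, \bbar{\mathfrak{X}}_s)$ of compactified formal schemes restricts on special fibres to a $\kappa$-morphism $\bbar{\mathfrak{X}}'_s \to \bbar{\mathfrak{X}}_s$ of schemes of finite type over $\kappa$; any morphism of Noetherian schemes is quasi-compact, and a quasi-compact morphism of schemes is spectral on underlying spaces (indeed, being proper morphisms of $\kappa$-proper schemes, these maps are even closed and surjective onto their image). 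Applying Proposition~\ref{p:spectral_limit}(a) then gives that $\ttilde{X}$ is spectral.

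I do not expect any genuine obstacle here; the one point worth keeping straight is that the limit defining $\ttilde{X}$ is the limit in topological spaces, which is precisely the setting of Proposition~\ref{p:spectral_limit}, so no comparison between limits formed in different categories is required. As a byproduct, the projection maps $\pr_{(\mathfrak{X}, \bbar{\mathfrak{X}}_s)} : \ttilde{X} \to \bbar{\mathfrak{X}}_s$ are spectral as well, which will be convenient later when transferring constructible and pro-constructible subsets between $\ttilde{X}$ and the finite-level schemes $\bbar{\mathfrak{X}}_s$.
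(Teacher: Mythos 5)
Your proposal is correct and takes essentially the same route as the paper: the paper's proof simply states that $\ttilde{X}$ is a cofiltered limit of spectral spaces with spectral transition maps and cites the Stacks Project Tag 0A2U (the same result packaged as Proposition~\ref{p:spectral_limit}(a)). You merely spell out the routine verifications that each $\bbar{\mathfrak{X}}_s$ is spectral (being a qcqs scheme) and that the transition maps are spectral (being quasi-compact morphisms of Noetherian schemes), which the paper leaves implicit.
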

\begin{proof}
The space $\ttilde{X}$ is a cofiltered limit of spectral spaces, where transition maps are spectral, hence it is spectral by \cite[Tag 0A2U]{Stacks}.
\end{proof}

We shall now construct the map $\bbar{\spmap}$ in a different way for affinoid $X$ and affine $\mathfrak{X}$ (Construction~\ref{const:sp_prime}). It will not be clear at first that these constructions coincide. We prove that they are identical in Corollary \ref{cor:equal_maps}. The reason for two different constructions is that the author is able to carry out a global construction only along the lines of Construction \ref{const:map}. On the other hand, the map defined in Construction \ref{const:sp_prime} is more natural and it is easier to verify its properties. It is an interesting question whether the procedure below can be somehow globalized.

\begin{constr} \label{const:phi}
Let $A$ be an affinoid $K$-algebra and let $\overline{X} = \Spa{A}{A^{\text{min}}}$ be the universal compactification of $X = \Spa{A}{A^\circ}$ (see Proposition \ref{prop:compact_affinoid}).
Let $\mathscr{A}$ be some admissible $\Oh_K$-algebra of topologically finite type, such that $\mathscr{A} \otimes_{\Oh_K}K \simeq A$. Let $\mathscr{A}^{\text{min}} = \mathscr{A} \cap A^{\text{min}}$ and $\mathscr{A}_\kappa = \mathscr{A} \otimes_{\Oh_K} \kappa$. We are going to construct a map $\spmap_{\RZop} : \overline{X} \to \RZ{\mathscr{A}_\kappa}{\kappa}$, where $\RZ{\mathscr{A}_\kappa}{\kappa}$ is the Riemann--Zariski space of $\mathscr{A}_\kappa$ over $\kappa$. For simplicity we write $X_{\RZop} := \RZ{\mathscr{A}_\kappa}{\kappa}$, $X^\circ := \Spa{\mathscr{A}}{\mathscr{A}^{\text{min}}}$. We will construct $\spmap_{\RZop}$ as the composition of $\spmap_{\RZop} =r \circ i$ for certain maps $r$ and $i$ as on the diagram below.
\begin{center}
    \begin{tikzcd}
        \overline{X} = \Spa{A}{A^{\text{min}}} \arrow[]{r}{i} \arrow[bend right=15, swap]{rr}{\spmap_{\RZop}}& \Spa{\mathscr{A}}{\mathscr{A}^{\text{min}}} \arrow[]{r}{r}& \text{RZ}(\mathscr{A}_\kappa, \kappa) = X_{\RZop}
    \end{tikzcd}
\end{center}
First, we take $i: \Spa{A}{A^{\text{min}}} \to \Spa{\mathscr{A}}{\mathscr{A}^{\text{min}}} = X^\circ$ to be the obvious open immersion on the rational subset $X^\circ\left( \frac{1}{\varpi}\right)$ for any choice of pseudouniformizer $\varpi \in \mathscr{A}$. This is exactly the analytic locus of $X^\circ$. 
Note that we have immersions of topological spaces:
$$
\RZ{\mathscr{A}_\kappa}{\kappa} \subset \Spa{\mathscr{A}_\kappa}{\kappa} \subset \Spa{\mathscr{A}}{\mathscr{A}^{\text{min}}}, 
$$
where the first one follows from Proposition \ref{rz_subspace} and the second is the closed immersion coming from the quotient map of Huber pairs $(\mathscr{A}, \mathscr{A}^{\text{min}}) \to (\mathscr{A}_\kappa, \kappa)$. 
Now, we want $r: \Spa{\mathscr{A}}{\mathscr{A}^{\text{min}}} \to \RZ{\mathscr{A}_\kappa}{\kappa}$ to be a retraction, which assigns to every valuation its minimal horizontal specialization. From Proposition \ref{prop:temkin_retraction} we know that each point in $\Spa{\mathscr{A}_\kappa}{\kappa}$ has a horizontal specialization in $\RZ{\mathscr{A}_\kappa}{\kappa}$. So, for $r$ to be well-defined as a function, it suffices to prove, that each analytic point $x \in \Spa{\mathscr{A}}{\mathscr{A}^{\text{min}}}$ specializes horizontally to a point in $\Spa{\mathscr{A}_\kappa}{\kappa}$. This is shown in the next lemma.
\end{constr}

\begin{lemma}
For each $x \in \Spa{\mathscr{A}}{\mathscr{A}^{\text{min}}}$ its minimal horizontal specialization lies in $\Spa{\mathscr{A}_\kappa}{\kappa}$.
\end{lemma}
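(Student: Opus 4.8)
The plan is to compute the minimal horizontal specialization of an arbitrary point $x \in \Spa{\mathscr{A}}{\mathscr{A}^{\min}}$ and check directly that its support contains the ideal defining the closed fiber. Recall that $\mathscr{A}$ carries the $\varpi$-adic topology for a pseudouniformizer $\varpi \in \mathscr{A}$, so that $\Spa{\mathscr{A}_\kappa}{\kappa} \subset \Spa{\mathscr{A}}{\mathscr{A}^{\min}}$ is cut out by $\varpi = 0$, i.e.\ it consists precisely of those points $z$ with $\varpi \in \supp(z)$. So the statement to prove is: if $y$ is the minimal horizontal specialization of $x$, then $\varpi \in \supp(y)$.

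First I would recall from Proposition~\ref{prop:horizontal} that $y = x|_{c\Gamma_x}$ is the minimal horizontal specialization, where $c\Gamma_x$ is the characteristic subgroup generated by the elements $|a(x)| \geq 1$ with $a \in \mathscr{A}$. The key point is that $\varpi$ is topologically nilpotent in $\mathscr{A}$, hence $\varpi \in \mathscr{A}^{\circ\circ} \subset \mathscr{A}^{\min}$, so $|\varpi(x)| \leq 1$; moreover, since $\mathscr{A}^{\min}$ is a ring of integral elements, $x$ is continuous, which forces $|\varpi(x)^n| = |\varpi(x)|^n \to 0$ in $\Gamma_x \cup \{0\}$, i.e.\ $|\varpi(x)|$ is \emph{not} cofinal upward, equivalently $|\varpi(x)|^{-1} \notin c\Gamma_x$ unless $|\varpi(x)|$ already equals $1$ — and in the latter degenerate case $\varpi$ would be a unit times something, contradicting topological nilpotence unless $\varpi \in \supp(x)$ already. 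Assuming $|\varpi(x)| < 1$: then $|\varpi(x)| \notin c\Gamma_x$ because no positive power of $|\varpi(x)|$ can be bounded below by an element $\geq 1$ of the form $|a(x)|$ — this is exactly the statement that $c\Gamma_x$ is a \emph{convex} subgroup not containing the ``infinitely small'' element $|\varpi(x)|$. By the definition of $x|_{c\Gamma_x}$, any $f$ with $|f(x)| \notin c\Gamma_x$ gets sent to $0$; in particular $|\varpi(x|_{c\Gamma_x})| = 0$, so $\varpi \in \supp(y)$, which is what we wanted.

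The one subtlety I would need to handle carefully is the edge case where $x$ is already non-analytic, i.e.\ $\varpi \in \supp(x)$: then $x$ lies in the closed fiber from the start, $|\varpi(x)| = 0$, and $y = x|_{c\Gamma_x} = x$ trivially lies in $\Spa{\mathscr{A}_\kappa}{\kappa}$. Conversely, when $x$ is analytic one has $|\varpi(x)| \neq 0$ and, by continuity of $x$ on the $\varpi$-adically topologized ring $\mathscr{A}$, the element $|\varpi(x)|$ must lie in the ``infinitesimal'' part — precisely, it cannot belong to $c\Gamma_x$, since $c\Gamma_x$ is generated by elements $\geq 1$ and is convex, while $|\varpi(x)| < 1$ is smaller than every element of $\Gamma_x$ of the form $|a(x)|$ with $a \in \mathscr{A}$ and $|a(x)| \geq 1$ raised to no matter which power. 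Here I would invoke the general fact (implicit in Huber's treatment, cf.\ the discussion around Proposition~\ref{prop:horizontal} and \cite{Wed}) that for a continuous valuation $x$ on an adically-topologized ring with ideal of definition generated by $\varpi$, the characteristic group $c\Gamma_x$ is contained in the largest convex subgroup on which $\varpi$ is ``not infinitesimal'', which is disjoint from the convex hull of $|\varpi(x)|$.

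The main obstacle, and the part deserving the most care, is making the convexity argument airtight: one must argue that $|\varpi(x)|$ generates a convex subgroup disjoint from $c\Gamma_x$ (other than at $1$), i.e.\ that no product $|a_1(x)| \cdots |a_k(x)|$ with each $|a_i(x)| \geq 1$ can dominate $|\varpi(x)|^{-N}$ for large $N$. This follows because each $|a_i(x)| \leq |\varpi(x)|^{-m_i}$ would contradict... no — rather, one uses that $\mathscr{A}$ is $\varpi$-adically \emph{complete} of topologically finite type, so every $a \in \mathscr{A}$ satisfies $|a(x)| \le C$ for some bound independent of the infinitesimals, and crucially $a/\varpi^n \notin \mathscr{A}$ for $n$ large unless $a$ is divisible — hence $|a(x)| \cdot |\varpi(x)|^{-n}$ cannot stay $\geq 1$ for all $n$. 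Spelling this out via the continuity of $x$ and the definition of $\mathscr{A}^{\min}$ (which contains $K$, so $|\varpi(x)|$ is comparable to the rank-one valuation on $K$) is the technical heart; I expect it to be a short but slightly delicate manipulation of value groups, and I would present it as a self-contained paragraph rather than deferring to a citation.
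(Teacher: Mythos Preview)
Your overall strategy matches the paper's: reduce to showing $|\varpi(x)| \notin c\Gamma_x$, so that $\varpi$ lies in the support of the minimal horizontal specialization $x|_{c\Gamma_x}$. However, the crucial step is not correctly executed.

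The gap is in your final paragraph. You need to rule out the existence of $a \in \mathscr{A}$ with $|a(x)\cdot\varpi(x)| \ge 1$ (this is what $|\varpi(x)| \in c\Gamma_x$ unpacks to, after writing $|\varpi(x)|^{-1}$ as dominated by a finite product of generators and multiplying them into a single $a$). What you actually argue is that for each fixed $a$, the quantity $|a(x)|\cdot|\varpi(x)|^{-n}$ is eventually $<1$; equivalently $|a\varpi^n(x)| < 1$ for \emph{some} $n$. This is strictly weaker than $|a\varpi(x)| < 1$ and does not exclude $|\varpi(x)|^{-1} \in c\Gamma_x$. Your divisibility remark (``$a/\varpi^n \notin \mathscr{A}$'') is a non sequitur: membership in $\mathscr{A}$ says nothing directly about the size of $|a(x)|\cdot|\varpi(x)|^{-n}$ under the valuation $x$.

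The paper closes this gap with a one-line observation you never invoke: $\mathscr{A} \subset A^\circ$. Thus every $a \in \mathscr{A}$ is powerbounded in $A$, so $a\varpi$ is topologically nilpotent (powerbounded times topologically nilpotent), and continuity of $x$ forces $|a\varpi(x)| < 1$. This immediately contradicts $|a\varpi(x)| \ge 1$ and finishes the proof. Once you insert this single fact, your convexity discussion and the divisibility heuristics become unnecessary.
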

\begin{proof}
Recall that by Proposition \ref{prop:horizontal} the minimal horizontal specialization of $x$ is $x|_{c\Gamma_x}$ (here $c\Gamma_x$ is taken with respect to $\mathscr{A}$). The claim that $x|_{c\Gamma_x} \in \Spa{\mathscr{A}_\kappa}{\kappa}$ is equivalent to saying that there is no pseudouniformizer $\varpi \in \Oh_K$ such that $\varpi \in c\Gamma_x$. To prove it, suppose that $\varpi \in c\Gamma_x$. Then there exists $a \in \mathscr{A}$ such that $|a \cdot \varpi| > 1$. But $\mathscr{A} \subset A^\circ$, so in particular the element $a$ is powerbounded. But this is in contradiction with $|a \cdot \varpi| > 1$. So $\varpi \not\in c\Gamma_x$ and we indeed have that $x|_{c\Gamma_x} \in \Spa{\mathscr{A}_\kappa}{\kappa}$.
\end{proof}

Let us now investigate some properties of the map $\spmap_{\RZop}$.

\begin{lemma} \label{lemma:sp_rz_properties}
The map $\spmap_{\RZop} : \overline{X} \to \RZ{\mathscr{A}_\kappa}{\kappa}$ constructed above is continuous, spectral, closed, and surjective.
\end{lemma}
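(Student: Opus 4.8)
The plan is to establish the four properties more or less independently, using that $\spmap_{\RZop} = r \circ i$ where $i$ is an open immersion onto the analytic locus $X^\circ(\tfrac{1}{\varpi})$ and $r$ is the horizontal-retraction map. First I would address \emph{continuity}: the map $i$ is an open immersion, hence continuous, and $r$ is continuous by Proposition~\ref{prop:temkin_retraction} (the only subtlety, that $r$ is well-defined, was just settled by the preceding lemma), so the composite is continuous. For \emph{spectrality}, since $\overline{X}$ and $\RZ{\mathscr{A}_\kappa}{\kappa}$ are both spectral (the latter being a cofiltered limit of proper schemes over $\kappa$ with spectral transition maps, by Corollary~\ref{rz_comp} and Proposition~\ref{p:spectral_limit}), by Proposition~\ref{prop:spectral}(b) it suffices to check $\spmap_{\RZop}$ is continuous in the constructible topologies; but a continuous map between spectral spaces whose source is quasi-compact and whose target is quasi-separated is automatically spectral once one knows preimages of quasi-compact opens are quasi-compact, which here follows because $i$ is a quasi-compact open immersion and $r$ is spectral (Temkin proves $r$ is a spectral retraction in \cite[Corollary 3.3.6]{RZ}).

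Next, \emph{surjectivity}. A point of $\RZ{\mathscr{A}_\kappa}{\kappa}$ is, by Remark~\ref{rmk:rz}(i) and Proposition~\ref{rz_subspace}, a valuation $v$ on $\mathscr{A}_\kappa$ trivial on $\kappa$ with no horizontal specialization, equivalently a point of $\Spa{\mathscr{A}_\kappa}{\kappa}$ with no horizontal specialization. Given such $v$, with support $\mathfrak{p} \subset \mathscr{A}_\kappa$, I would first use adic going-up (Proposition~\ref{adic_going_up}) inside $\Spa{\mathscr{A}}{\mathscr{A}^{\min}}$ to produce a horizontal generalization whose support lies outside $V(\varpi)$, i.e.\ an analytic point $x$ of $X^\circ = \Spa{\mathscr{A}}{\mathscr{A}^{\min}}$; by Proposition~\ref{hor_cont} and Proposition~\ref{prop:horizontal} this $x$ can be arranged so that its minimal horizontal specialization is exactly $v$. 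Since $x$ lies in the analytic locus it lies in the image of $i$, and then $\spmap_{\RZop}$ sends the corresponding point of $\overline{X}$ to $r(x) = v$. One must also check that the point of $\Spa{\mathscr{A}}{\mathscr{A}^{\min}}$ produced genuinely lies in $\Spa{A}{A^{\min}} = \overline{X}(\tfrac{1}{\varpi})$ with its valuation bounded by $A^{\min}$ rather than merely $\mathscr{A}^{\min}$; this should follow from $A^{\min}$ being the integral closure of $K\cdot\mathscr{A}^{\min}$-type data in $A$, or can be folded into the choice made via Proposition~\ref{hor_cont}.

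Finally, \emph{closedness}. Here I would combine the previous points: $\spmap_{\RZop}$ is a spectral map of spectral spaces, so by Proposition~\ref{prop:spectral}(c) it carries pro-constructible (in particular closed) sets to pro-constructible sets; to upgrade ``pro-constructible image'' to ``closed image'' it suffices to show the image of a closed set is stable under specialization, by Proposition~\ref{prop:spectral}(d). So let $Z \subset \overline{X}$ be closed and $w$ a specialization of some $v = \spmap_{\RZop}(z)$, $z \in Z$; all specializations in $\RZ{\mathscr{A}_\kappa}{\kappa}$ are vertical (no horizontal ones exist there by construction), so by Proposition~\ref{prop:vert_spec} $w$ corresponds to a coarsening of the valuation ring $R_v \subset \kappa(\text{supp})$. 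The point $z$ itself, via Proposition~\ref{prop:vert_spec} applied on $\overline{X}$, is a vertical specialization of its minimal analytic generalization $z_0 \in X$, and the valuation $R_z$ refines $R_v$ under the natural map; coarsening $R_z$ compatibly with the passage $R_v \rightsquigarrow R_w$ produces a vertical specialization $z'$ of $z$ (hence $z' \in Z$, as $Z$ is closed and closed sets are specialization-stable) with $\spmap_{\RZop}(z') = w$. The main obstacle I anticipate is exactly this last bookkeeping — tracking how the residue-field valuation rings behave under both the specialization map $\spmap$ (through $\iota: \kappa(\spmap(z_0)) \to \kappa(z_0)$) and the horizontal retraction $r$ simultaneously, to see that a vertical specialization downstairs always lifts to a vertical specialization in $\overline{X}$; surjectivity's support-juggling via going-up is the second most delicate point, but both are local computations with valuations rather than anything structurally hard.
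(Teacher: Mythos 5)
Your continuity argument matches the ``different perspective'' recorded in the remark immediately after this lemma in the paper (factoring through the discretization and using Temkin's continuous retraction together with the fact that horizontal specializations of continuous valuations are continuous), so that part is fine. For spectrality, the paper instead gives a direct computation: it shows that the union $\bigcup_{h\in\mathscr{A}} X^\circ(\tfrac{f}{g})\cap X^\circ(\tfrac{1}{gh})$ appearing in the preimage of a rational set can be replaced by a \emph{finite} union using topological generators of $\mathscr{A}$; your invocation of Temkin for spectrality of $r$ is plausible but is not what the cited Proposition~\ref{prop:temkin_retraction} states (it asserts continuity only), so as written this step is not justified.

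For surjectivity there is a genuine gap. To invoke adic going-up (Proposition~\ref{adic_going_up}) you must first exhibit a prime $\mathfrak{p}\subset\supp(y)\subset\mathscr{A}$ with $\varpi\notin\mathfrak{p}$; you simply assert that going-up ``produces a horizontal generalization whose support lies outside $V(\varpi)$,'' but this is exactly where a hypothesis is needed. The paper uses \emph{admissibility} (flatness of $\mathscr{A}$ over $\Oh_K$): since $\mathscr{A}\to A$ is injective, $\Spec A\to\Spec\mathscr{A}$ is a dense open immersion, so every prime of $\mathscr{A}$ is specialized by one not containing $\varpi$. Without this the needed $\mathfrak{p}$ need not exist. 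You also need to verify that the resulting $x'$ (the minimal horizontal specialization with support outside $V(I)$) is continuous and bounded by $\mathscr{A}^{\min}$ — Proposition~\ref{hor_cont} gives continuity, and boundedness uses that horizontal specialization cannot decrease values on $\mathscr{A}^{\min}$; this is sketched but not established in your write-up.

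Your approach to closedness is genuinely different from the paper's and is the weakest part. The paper takes the closure $Z'$ of $Z$ inside $\Spa{\mathscr{A}}{\mathscr{A}^{\min}}$, observes $r(Z')=Z'\cap\RZ{\mathscr{A}_\kappa}{\kappa}$ is closed, and then shows $\spmap_{\RZop}(Z)=r(Z')$ by factoring a specialization $y\succeq x$ as vertical followed by horizontal (Proposition~\ref{spec_factorization}) and reusing the surjectivity mechanism to produce a point of $\Spa{A}{A^{\min}}$ over $y$. You instead try to lift a \emph{vertical} specialization $w\preceq v$ in $\RZ{\mathscr{A}_\kappa}{\kappa}$ to a vertical specialization $z'\preceq z$ in $\overline{X}$ directly, working with residue-field valuation rings. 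Beyond being left unexecuted (you yourself flag it as the ``main obstacle''), the sketch has the specialization direction reversed: in a Riemann--Zariski space specializing means passing to a \emph{smaller} valuation ring, so $w$ corresponds to a \emph{refinement} of $R_v$, not a coarsening. The lift you want — given the field embedding $\iota\colon\kappa(\spmap(z_0))\hookrightarrow\kappa(z_0)$ and $R_w\subset\iota^{-1}(R_z)$, produce $R_{z'}\subset R_z$ with $\iota^{-1}(R_{z'})=R_w$ and $\kappa(z_0)^+\subset R_{z'}$ — does exist via composite valuations and Chevalley extension, but it is real work and is precisely what the paper's closure-and-retract argument sidesteps. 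I would recommend adopting the paper's route for closedness: it reuses the surjectivity computation almost verbatim and avoids the residue-field bookkeeping entirely.
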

\begin{proof}
We shall first prove that $\spmap_{\RZop}$ is continuous. It is sufficient to check it for the map $r$. Pick any $\widetilde{f}, \widetilde{g} \in \mathscr{A}_\kappa$ and consider the rational set $X_{\RZop} \left( \frac{\widetilde{f}}{\widetilde{g}}\right)$. Since rational sets like this generate the topology in $X_{\RZop}$, it is enough to prove that $r\inv \left( X_{\RZop} \left( \frac{\widetilde{f}}{\widetilde{g}}\right) \right)$ is open. Take any lifts $f, g \in \mathscr{A}$ of $\widetilde{f}$ and $\widetilde{g}$ respectively. We claim the following
$$
r\inv \left( X_{\RZop} \left( \frac{\widetilde{f}}{\widetilde{g}}\right) \right) 
= 
\bigcup_{h \in \mathscr{A}} \left( X^\circ \left( \frac{f}{g}\right) \cap X^\circ \left( \frac{1}{g \cdot h}\right) \right).
$$

Let $x \in RHS$. There exists $h \in \mathscr{A}$ such that $|gh(x)| \geq 1$. So $|gh(x)| \in c\Gamma_x$ and thus $|gh(r(x))| \geq 1 $, hence $|g(r(x))| \not= 0$. Obviously $|f(r(x))| \leq |g(r(x))|$, by the construction of horizontal specialization. Now, as the valuation $r(x)$ takes $A^{\circ\circ}$ to zero, it is a valuation on $\mathscr{A}_\kappa$ and we have $\widetilde{f}(r(x)) = f(r(x))$ and $\widetilde{g}(r(x)) = g(r(x))$. So $\ttilde{f}(r(x)) \leq \ttilde{g}(r(x))$ and hence $x \in LHS$.

Let $x \in LHS$. Since $|g(r(x))| \not= 0$ and by definition of $c\Gamma_x$ there exists $h \in \mathscr{A}$ such that $|gh(r(x))| \geq 1$, hence $|gh(x)| \geq 1$. Moreover, obviously $|f(r(x))| \leq |g(r(x))|$ and thus, since $g(r(x)) \not=0$ and $c\Gamma_x$ is convex we get $|f(x)| \leq |g(x)|$. Hence $x \in RHS$.

Moreover observe that actually 
\[
    X^\circ \left( \frac{f}{g}\right) \cap X^\circ \left( \frac{1}{g \cdot h}\right) = X^\circ \left( \frac{fh, 1}{gh} \right),
\]
which is a rational quasi-compact open subset, since $\{ fh, gh, 1\}$ generate an open ideal.

We shall now prove the spectrality of $\spmap_{\RZop}$. Since we assumed that $\mathscr{A}$ is topologically of finite type we can pick its generators $h_1, \ldots, h_n$. Assume that for some $x \in X^\circ$ we have $|gh_i(x)| \geq 1$ for all $i = 1,\ldots,n$. Then, since $\Oh_K[h_1, \ldots, h_n]$ is dense in $\mathscr{A}$, we conclude that for all $h \in \mathscr{A}$ we have $|gh(x)| \geq 1$. So we have
$$
\bigcup_{h \in \mathscr{A}} X^\circ \left( \frac{1}{g \cdot h}\right)
=
\bigcup_{i=1}^n X^\circ \left( \frac{1}{g \cdot h_i}\right)
$$
and moreover
$$
r\inv \left( X_{\RZop} \left( \frac{\widetilde{f}}{\widetilde{g}}\right) \right) = 
\bigcup_{i=1}^n \left( X^\circ \left( \frac{f}{g}\right) \cap X^\circ \left( \frac{1}{g \cdot h_i}\right) \right),
$$
hence preimage under $r$ of a quasi-compact open is a quasi-compact open, i.e. $r$ is spectral. Since $i$ is spectral too, we conclude that their composition $\spmap_{\RZop}$ is spectral.

We now come to the surjectivity of $\spmap_{\RZop}$. Let $y \in \Spa{\mathscr{A}}{\mathscr{A}^{\text{min}}}$ and by $\mathfrak{p}_y \subset \mathscr{A}$ denote the kernel of the associated valuation $v_y$ on $\mathscr{A}$. Since $\mathscr{A}$ is flat over $\Oh_K$ the map $\mathscr{A} \to A$ is injective. Thus the associated map on spectra $u: \Spec{A} \to \Spec{\mathscr{A}}$ is a quasi-compact open immersion with dense image. Hence there exists $\mathfrak{p} \in u(\Spec{A})$ such that $\mathfrak{p}_y$ is a specialization of $\mathfrak{p}$. 
Observe that $\mathfrak{p}$ is not an open ideal of $\mathscr{A}$. By the adic going-up (Proposition \ref{adic_going_up}) there exists a horizontal generalization $x \in \Spv{\mathscr{A}}$ of $y$ such that $\mathfrak{p}_x = \mathfrak{p}$. Let $I$ be some ideal of definition of $\mathscr{A}$. Now, take $x'$ to be a horizontal specialization of $x$, minimal among those that have support outside of $V(I) \subset \Spec{\mathscr{A}}$. Note that horizontal specializations are totally ordered, so $x'$ specializes horizontally to $y$. 
By Proposition \ref{hor_cont} $x'$ is continuous. So in order to prove that $x' \in \Spa{\mathscr{A}}{\mathscr{A}^{\text{min}}}$, it is sufficient to show, that for all $a \in \mathscr{A}^{\text{min}}$ we have $|a(x')| \leq 1$. Suppose contrary, and let $|a(x')| > 1$ for some $a \in \mathscr{A}^{\text{min}}$. 
Then, $|a(x')| \in c\Gamma_{x'}$ (the characteristic group is considered here with respect to $\mathscr{A}$) and since $y$ is a horizontal specialization of $x'$ we get $|a(y)| > 1$, which is a contradiction. So $x' \in \Spa{\mathscr{A}}{\mathscr{A}^{\text{min}}}$ and its support does not contain $I$, hence $x'$ is an analytic point, i.e. $x' \in \Spa{A}{A^{\text{min}}}$. When we set $y \in \RZ{\mathscr{A}_\kappa}{\kappa}$, then $y = r(x')$. So $\spmap_{\RZop}$ is indeed surjective.

Let $Z \subset \Spa{A}{A^{\text{min}}}$ be a closed subset and $Z'$ its closure in $\Spa{\mathscr{A}}{\mathscr{A}^{\text{min}}}$. Then since $r$ is a retraction on specializations we have $r(Z') = Z' \cap \RZ{\mathscr{A}_\kappa}{\kappa}$ which is obviously closed in $\RZ{\mathscr{A}_\kappa}{\kappa}$. So it suffices to prove that $\spmap_{\RZop}(Z) = r(Z')$. Note that $\Spa{A}{A^{\text{min}}}$ is quasi-compact, so $Z$ is pro-constructible in $\Spa{\mathscr{A}}{\mathscr{A}^{\text{min}}}$. Thus by Proposition \ref{prop:spectral} each point in $Z'$ is a specialization of a point in $Z$. 

Let $x \in Z$ and suppose $y \in Z'$ is its specialization. Then by Proposition \ref{spec_factorization} there exists $x' \in \Spv{\mathscr{A}}$, such that $x'$ is a vertical specialization of $x$ and $y$ is a horizontal specialization of $x'$. Let $I$ be some ideal of definition of $\mathscr{A}$. Note that the support of $x'$ lies outside $V(I)$. Thus, there exists a minimal horizontal specialization $x''$ of $x'$, whose support lies outside $V(I)$. By identical considerations as previously, one can show that $x'' \in \Spa{A}{A^{\text{min}}}$ and $y$ is a horizontal specialization of $x''$. Hence, $r(x'') = r(y)$ and we conclude that $\spmap_{\RZop}$ is closed.
\end{proof}

\begin{rmk}
The continuity of the map $\spmap_{\RZop}: \Spa{A}{A^{\text{min}}} \to \RZ{\mathscr{A}_\kappa}{\kappa}$ can be also seen from a different perspective. Consider the diagram:
\begin{center}
    \begin{tikzcd}
    \Spa{A}{A^{\text{min}}} \arrow[hook]{r}{} \arrow[bend right=7]{drrr}{} &
    \Spa{\mathscr{A}}{\mathscr{A}^{\text{min}}} \arrow[hook]{r}{disc} &
    \Spa{\mathscr{A}_{disc}}{\mathscr{A}^{\text{min}}_{disc}} \arrow[]{r}{r'} &
    \RZ{\mathscr{A}_{disc}}{\mathscr{A}^{\text{min}}_{disc}} \\
    & & & \RZ{\mathscr{A}_\kappa}{\kappa} \arrow[hook]{u}{}
    \end{tikzcd}
\end{center}
where subscript $disc$ means taking the same ring, but with discrete topology, and $r'$ is the continuous retraction discussed in Proposition \ref{prop:temkin_retraction}. The diagram commutes, since the map $disc$ will not add any new horizontal specializations, because a horizontal specialization of a continuous valuation is always continuous by Proposition \ref{prop:horizontal}.
\end{rmk}

\begin{constr}\label{const:sp_prime}

Observe that the above map $\spmap_{\RZop}: \Spa{A}{A^{\text{min}}} \to \RZ{\mathscr{A}_\kappa}{\kappa}$ can be composed with the natural projection $\pr: \RZ{\mathscr{A}_\kappa}{\kappa} \to {\bbar{\mathfrak{X}}_s}$, where ${\bbar{\mathfrak{X}}_s}$ is some $\kappa$-compactification of $\mathfrak{X}_s = \Spec{\mathscr{A}_\kappa}$. Note that $\pr$ is surjective since any map of two compactifications is birational and proper. Moreover, $\pr_Y$ is spectral and closed by \cite[Theorem II.E.2.5]{FK}. Thus we get a continuous spectral closed surjection $\bbar{\spmap}': \pr \circ \spmap_{\RZop} : \Spa{A}{A^{\text{min}}} \to {\bbar{\mathfrak{X}}_s}$.

Notice, that $\pr_Y$ takes an element $x \in \RZ{\mathscr{A}}{\kappa}$ (which can be interpreted as a valuation on $\mathscr{A}$) to its center on ${\bbar{\mathfrak{X}}_s}$.
\end{constr}

\begin{lemma} \label{lemma:min_gen_rz}
Let $\mathscr{A}$ be a topologically finite type $\Oh_K$-algebra and let $A = \mathscr{A} \otimes_{\Oh_K} K$ and $\mathscr{A}_\kappa = \mathscr{A} \otimes_{\Oh_K} \kappa$. Let $x \in \Spa{A}{A^{\text{min}}}$ and $x_0$ be its minimal vertical generalization in $\Spa{A}{A^\circ}$ and let $R_x \subset \kappa(x_0)$ be the corresponding valuation ring. Let $\eta: \Spec{\mathscr{A}_\kappa} \to \RZ{\mathscr{A}_\kappa}{\kappa}$ be the natural map discussed in Section \ref{section:rz} and let $\iota: \kappa(\spmap(x_0)) \to  \kappa(x_0)$ be the natural homomorphism given by Remark \ref{rmk:iota}. Then: 
\begin{enumerate}[(i)]
    \item $\spmap_{\RZop}(x_0) = \eta(\spmap(x_0))$ and the diagram below commutes,
        \begin{center}
            \begin{tikzcd}
                \Spa{A}{A^\circ} \arrow[]{d}{\spmap} \arrow[hook]{r}{j} &
                \Spa{A}{A^{\text{min}}} \arrow[]{d}{\spmap_{\RZop}} 
                \\
                \Spec{\mathscr{A}_\kappa} \arrow[]{r}{\eta} &
                \RZ{\mathscr{A}_\kappa}{\kappa}
            \end{tikzcd}
        \end{center}
    \item $\spmap_{\RZop}(x_0)$ is the generalization of $\spmap_{\RZop}(x)$ that is minimal among those that lie inside~$\eta(\Spec{\mathscr{A}_\kappa})$,
    \item $\spmap_{\RZop}(x) \in \RZ{\mathscr{A}_\kappa}{\kappa}$ corresponds to the valuation supported in $\spmap(x_0)$ and given by the valuation ring~$\iota\inv(R_x)$.
\end{enumerate}
\end{lemma}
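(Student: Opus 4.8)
The plan is to prove (i), then (iii), then (ii), with (ii) a formal consequence of the first two.

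\emph{Part (i).} Here both sides will be computed directly. Since $\mathscr{A}\subseteq A^\circ$, every element of $\mathscr{A}$ is powerbounded, so $|a(x_0)|\le 1$ for all $a\in\mathscr{A}$; hence the characteristic group of $v_{x_0}|_{\mathscr{A}}$ is trivial, and $\spmap_{\RZop}(x_0)=r(i(x_0))=(v_{x_0}|_{\mathscr{A}})|_{\{1\}}$ is the trivial valuation on $\mathscr{A}$ supported at $\mathfrak{p}_{x_0}:=\{a\in\mathscr{A}:|a(x_0)|<1\}$. On the other hand, unwinding Construction \ref{const:spec}, $\spmap(x_0)$ is exactly the preimage of the maximal ideal of $\what{k(x_0)^+}$ under $\mathscr{A}\to\what{k(x_0)^+}$, i.e. $\mathfrak{p}_{x_0}$; and by Proposition \ref{rz_sheaf}, for any $\mathfrak{q}\in\Spec{\mathscr{A}_\kappa}$ the point $\eta(\mathfrak{q})$ is the trivial valuation with center $\mathfrak{q}$. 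Comparing these descriptions gives $\spmap_{\RZop}(x_0)=\eta(\spmap(x_0))$, and since this is a pointwise identity of continuous maps the square in (i) commutes.

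\emph{Part (iii).} Write $v:=v_x$. As $x$ is a vertical specialization of $x_0$ with associated valuation ring $R_x\subseteq\kappa(x_0)$ (Proposition \ref{prop:vert_spec}), $v$ is the composite of $v_{x_0}$ with the valuation $R_x$ on $\kappa(x_0)$; thus $\Gamma_v$ sits in an extension $1\to\Gamma_{R_x}\to\Gamma_v\to\Gamma_{x_0}\to 1$ with $\Gamma_{R_x}$ convex and $v/\Gamma_{R_x}=v_{x_0}$. I would then verify, by a direct computation on elements of $\mathscr{A}$, that the valuation $w_x$ on $\mathscr{A}_\kappa$ gotten by pulling back $\iota\inv(R_x)=R_x\cap\kappa(\spmap(x_0))$ along $\mathscr{A}_\kappa\twoheadrightarrow\mathscr{A}_\kappa/\spmap(x_0)\hookrightarrow\kappa(\spmap(x_0))$ coincides, as a valuation on $\mathscr{A}$, with the horizontal specialization $(v|_{\mathscr{A}})|_{\Gamma_{R_x}}$ of $i(x)=v|_{\mathscr{A}}$: for $a\in\mathscr{A}$ with $v_{x_0}(a)<1$ both vanish, while for $v_{x_0}(a)=1$ both equal $|\bar{a}|_{R_x}$, $\bar{a}$ being the image of $a$ in $\mathscr{A}_\kappa/\spmap(x_0)$. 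Since $c\Gamma_{v|_{\mathscr{A}}}\subseteq\Gamma_{R_x}$, the valuation $w_x$ is a horizontal specialization of $i(x)$, so the two have the same minimal horizontal specialization and $\spmap_{\RZop}(x)=r(i(x))=r(w_x)$ — the point of $\RZ{\mathscr{A}_\kappa}{\kappa}$ attached to the valuation supported at $\spmap(x_0)$ with valuation ring $\iota\inv(R_x)$, which is the assertion. To match this with the precise meaning of (iii) in the sense of Remark \ref{rmk:rz}(i) — that $\spmap(x_0)$ is the center — one must also check that the support of $r(w_x)$ is exactly $\spmap(x_0)$, equivalently that $c\Gamma_{v|_{\mathscr{A}}}=\Gamma_{\iota\inv(R_x)}$; here the hypothesis that $x_0$ is the \emph{minimal} vertical generalization of $x$ in $X$ is essential, since every coarsening of $v$ killing a convex subgroup $H$ with $c\Gamma_{v|_{\mathscr{A}}}\subseteq H\subsetneq\Gamma_{R_x}$ would be a vertical generalization of $x$ strictly finer than $x_0$, hence not in $X$, which forces the existence of functions in $A^\circ$ with large values; this has to be transported to $\mathscr{A}$ using that $A^\circ$ is finite over $\mathscr{A}$.

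\emph{Part (ii).} By (i), $\spmap_{\RZop}(x_0)=\eta(\spmap(x_0))$ lies in $\eta(\Spec{\mathscr{A}_\kappa})$, and it is a generalization of $\spmap_{\RZop}(x)$ — e.g. by continuity of $\spmap_{\RZop}$, or directly from (iii). By Proposition \ref{rz_sheaf}, $\spmap_{\RZop}(x)$ has a unique minimal generalization inside $\eta(\Spec{\mathscr{A}_\kappa})$, namely $\eta$ of its center, and by the support computation in (iii) this center is $\spmap(x_0)$; hence (ii).

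The main obstacle is the identity $c\Gamma_{v_x|_{\mathscr{A}}}=\Gamma_{\iota\inv(R_x)}$ in part (iii): one must show that restricting the composite valuation $v_x$ down to the finite-type model $\mathscr{A}$ does not lose any of the rank contributed by $R_x$. The rest is bookkeeping with composite valuations and with the descriptions of $\eta$ and of points of Riemann--Zariski spaces from \S\ref{section:rz}.
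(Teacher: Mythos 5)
Your proposal follows essentially the same route as the paper's proof, with a reorganization: you establish (iii) first and then deduce (ii), whereas the paper proves the minimality statement (ii) directly and then reads off the valuation ring in (iii) from a diagram chase. Part (i) is correct and matches the paper's argument verbatim.

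The crux of both arguments, which you correctly isolate as ``the main obstacle,'' is the identity $c\Gamma_{v_x|_{\mathscr{A}}}=\Gamma_{R_x}$. The paper obtains it in the course of proving (ii): since $x_0$ is the \emph{minimal} vertical generalization of $x$ inside $\Spa{A}{A^\circ}$, the convex subgroup $H\subset\Gamma_x$ with $v_{x_0}=v_x/H$ (whose value group is exactly $\Gamma_{R_x}$) is minimal among those with $(v_x/H)(A^\circ)\le 1$; once one knows this is equivalent to $(v_x/H)(\mathscr{A})\le 1$, minimality forces $H=c\Gamma_{v_x|_{\mathscr{A}}}$. The equivalence between the $A^\circ$-condition and the $\mathscr{A}$-condition is where you invoke that ``$A^\circ$ is finite over $\mathscr{A}$''; this is stronger than needed, and also stronger than what holds in general (finiteness of $A^\circ$ over a tft model typically requires extra hypotheses such as $K$ discretely valued and $A$ reduced). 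What is actually needed, and what does hold for any tft $K$-algebra, is that $A^\circ$ is \emph{integral} over $\mathscr{A}$: if $a\in A^\circ$ satisfies a monic equation over $\mathscr{A}$ and $v(\mathscr{A})\le 1$, then $v(a)\le 1$. Alternatively, boundedness $\varpi^m A^\circ\subseteq\mathscr{A}$ suffices. With this substitution your argument closes the gap you flag, and then your deduction of (ii) from (i), (iii), and Proposition \ref{rz_sheaf} is correct. So: same approach as the paper, with a defensible reordering; the one genuine issue is the appeal to finiteness, which should be weakened to integrality.
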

\begin{proof}
\textit{(i)} Let $x \in \Spa{A}{A^\circ}$. Consider $x$ as an element of $\Spa{\mathscr{A}}{\mathscr{A}^{\min}}$. Then the valuation $v_x : \mathscr{A} \to \Gamma_x$ has a trivial characteristic group, as $v_x(A^\circ) \leq 1$ and $\mathscr{A} \subset A^\circ$. Hence the retraction $\spmap_{RZ}(x)$ is a trivial valuation on the support of $x$. This is exactly the same valuation as $\eta(\spmap(x))$.

\textit{(ii)} Now let $x \in \Spa{A}{A^{\min}}$ and assume $x_0$ is its minimal vertical generalization in $\Spa{A}{A^\circ}$. Consider $x$ and $x_0$ as elements of $\Spa{\mathscr{A}}{\mathscr{A}^{\min}}$ and take the associated valuations $v_x: \mathscr{A} \to \Gamma_x$ and $v_{x_0}: \mathscr{A} \to \Gamma_{x_0}$. Then, by the construction of vertical generalizations, there exists a convex subgroup $H \subset \Gamma_x$ such that $v_{x_0} = v_x / H$, i.e. we have an isomorphism $\Gamma_{x_0} \simeq \Gamma_x / H$ and the valuation $v_{x_0}$ is given by the composition $v_{x_0} = \pi \circ v_x$, where $\pi$ is the natural quotient map $\pi: \Gamma_x \to \Gamma_x / H$. All vertical generalizations of a continuous and analytic valuation are continuous and analytic, so for any $H' \subset \Gamma_x$ the valuation $v_x / H'$ is continuous and analytic. Ordering by inclusion $H_1 \subseteq H_2 \subset \Gamma_x$ corresponds to ordering by specialization $v_x / H_1 \preceq v_x / H_2$. Thus $H$ is minimal among convex subgroups of $\Gamma_x$ satisfying $(v_x / H )(\mathscr{A}) \leq 1$. Thus $c\Gamma_x \subset H$ so simply taking $H = c\Gamma_x$ gives us minimal $H$ satisfying $(v_x / H )(\mathscr{A}) \leq 1$. Hence $v_{x_0} = v_x / c\Gamma_x$.

Let us now also regard $\spmap_{RZ}(x)$ and $\spmap_{RZ}(x_0)$ as elements $\ttilde{x}$ and $\ttilde{x_0}$ of $\Spa{\mathscr{A}}{\mathscr{A}^{\min}}$ and denote the corresponding valuations as $\ttilde{v}_x : \mathscr{A} \to \Gamma_x$ and $\ttilde{v}_{x_0} : \mathscr{A} \to \Gamma_{x_0}$. Observe that $\supp{\ttilde{v}_x} = \{ a \in \mathscr{A} \mid v_x(a) < 1, v_x(a) \not\in c\Gamma_x\}$ and $\supp{\ttilde{v}_{x_0}} = \{ a \in \mathscr{A} \mid v_{x_0}(a) < 1\}$, since $c\Gamma_{x_0} = \{1\}$. One easily verifies that $\supp{\ttilde{v}_{x}} = \supp{\ttilde{v}_{x_0}}$, as this amount to checking that for each $a \in \mathscr{A}$ the inequality $v_{x_0}(a) < a$ holds if and only if $v_x(a) < 1$ and $v_x(a) \not\in c\Gamma_x$. Moreover, $\ttilde{v}_{x_0}$ is trivial on this support. By Remark \ref{rmk:rz} this proves \textit{(ii)}.

\textit{(iii)} We have the maps $\sA \to k(x)^+ \to k(x_0)^+$. Observe that $k(x)^+$ and $k(x_0)^+$ are valuation rings and the maps from $\sA$ to them determine the associated valuations $x$ and $x_0$ on $\sA$. Then the valuation $\ttilde{x_0}$ is given by the composition $\sA \to k(x_0)^+ \to \kappa(x_0)$, where $\kappa(x_0)$ is considered as a trivial valuation ring. Now $\ttilde{x}$ is given by the map $\sigma$ below
\begin{center}
    \begin{tikzcd}
        \sA \arrow[]{r}{} \arrow[bend right=10]{dr}{\sigma}&
        k(x)^+ \arrow[]{r}{} \arrow[]{d}{}&
        k(x_0)^+ \arrow[]{d}{}\\
        & R_x \arrow[]{r}{} &
        \kappa(x_0)
    \end{tikzcd}
\end{center}
Observe that $\sA \to \kappa(x_0)$ factorizes through $\fracfield(\sA / \text{ker}(
\sA \to \kappa(x_0))) = \kappa(\spmap(x_0))$. Hence $\ttilde{x}$ can be also given by the valuation ring $\iota\inv(R_x)$. Of course the map $\sA \to \iota\inv(R_x)$ factorizes naturally through $\sA_\kappa$.
\end{proof}

\begin{cor}\label{cor:equal_maps}
Let $X = \Spa{A}{A^\circ}$ be an affinoid rigid space and let $(\mathfrak{X}, \bbar{\mathfrak{X}}_s) \in \mathcal{M}(\bbar{X})$. Then the maps of sets $\bbar{\spmap}, \bbar{\spmap}': \bbar{X} \to \bbar{\mathfrak{X}}_s$ (defined in Construction \ref{const:map} and Construction \ref{const:sp_prime} respectively) are equal.
\end{cor}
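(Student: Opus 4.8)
The plan is to unwind both constructions into statements about the valuative criterion of properness for $\bbar{\mathfrak{X}}_s\to\Spec{\kappa}$, and to feed in Lemma~\ref{lemma:min_gen_rz}\textit{(iii)}, which is the only non-formal input required. Fix $x\in\bbar X$, let $x_0$ be its minimal vertical generalization in $X$, let $R_x\subset\kappa(x_0)$ be the associated valuation ring (Proposition~\ref{prop:vert_spec}), and let $\iota\colon\kappa(\spmap(x_0))\to\kappa(x_0)$ be the homomorphism of Remark~\ref{rmk:iota}. First I would record a small algebraic observation: $\iota$ is injective, being a map of fields; $\iota\inv(R_x)$ is a valuation ring of $\kappa(\spmap(x_0))$, being the preimage under the embedding $\iota$ of the valuation ring $R_x$; and the inclusion $\iota\inv(R_x)\hookrightarrow R_x$ is \emph{local}, since $a\in\iota\inv(R_x)$ is a unit there exactly when $\iota(a)$ is a unit in $R_x$, whence $\mathfrak m_{\iota\inv(R_x)}=\iota\inv(\mathfrak m_{R_x})$. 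Consequently there is a commutative square
\begin{center}
\begin{tikzcd}
\Spec{\kappa(x_0)} \arrow[r] \arrow[d] & \Spec{\kappa(\spmap(x_0))} \arrow[d] \\
\Spec{R_x} \arrow[r] & \Spec{\iota\inv(R_x)}
\end{tikzcd}
\end{center}
in which both vertical morphisms carry the closed point to the closed point.

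Next I would compute $\bbar{\spmap}'(x)$. By Lemma~\ref{lemma:min_gen_rz}\textit{(iii)} the point $\spmap_{\RZop}(x)\in\RZ{\mathscr{A}_\kappa}{\kappa}$ is the valuation whose support is the prime corresponding to $\spmap(x_0)\in\mathfrak{X}_s=\Spec{\mathscr{A}_\kappa}$ and whose valuation ring is $\iota\inv(R_x)\subset\kappa(\spmap(x_0))$. By Construction~\ref{const:sp_prime} (together with the description of $\pr$ recalled there via Remark~\ref{rmk:rz}), $\bbar{\spmap}'(x)=\pr(\spmap_{\RZop}(x))$ is the \emph{center} of this valuation on $\bbar{\mathfrak{X}}_s$; concretely, it is the image of the closed point of $\Spec{\iota\inv(R_x)}$ under the unique lift $\Spec{\iota\inv(R_x)}\to\bbar{\mathfrak{X}}_s$ furnished by the valuative criterion for the composite $\Spec{\kappa(\spmap(x_0))}\to\mathfrak{X}_s\hookrightarrow\bbar{\mathfrak{X}}_s$ over $\Spec{\kappa}$.

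On the other hand, in Construction~\ref{const:map} the top horizontal arrow $\Spec{\kappa(x_0)}\to\bbar{\mathfrak{X}}_s$ is by definition the composite $\Spec{\kappa(x_0)}\to\Spec{\kappa(\spmap(x_0))}\to\mathfrak{X}_s\hookrightarrow\bbar{\mathfrak{X}}_s$ (first map $\Spec{\iota}$, last map the same open immersion as above by definition of $\mathcal M(\bbar X)$), and $\bbar{\spmap}(x)$ is the image of the closed point of $\Spec{R_x}$ under the unique lift $\theta\colon\Spec{R_x}\to\bbar{\mathfrak{X}}_s$. Now glue the bottom row of the square above to the lift from the previous paragraph: the composite $\Spec{R_x}\to\Spec{\iota\inv(R_x)}\to\bbar{\mathfrak{X}}_s$ lies over $\Spec{\kappa}$ and, by commutativity of the square, restricts over $\Spec{\kappa(x_0)}$ to exactly the chosen map $\Spec{\kappa(x_0)}\to\bbar{\mathfrak{X}}_s$; by uniqueness in the valuative criterion it therefore equals $\theta$. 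Since the left vertical arrow of the square sends the closed point to the closed point, $\bbar{\spmap}(x)$ and $\bbar{\spmap}'(x)$ are the image of one and the same closed point under one and the same morphism, hence coincide. As $x\in\bbar X$ was arbitrary, this proves $\bbar{\spmap}=\bbar{\spmap}'$.

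I expect no genuine obstacle: once Lemma~\ref{lemma:min_gen_rz}\textit{(iii)} is available, the argument is pure diagram chasing with the valuative criterion. The one point demanding care is bookkeeping — making explicit that the notion of ``center'' implicit in Construction~\ref{const:sp_prime} is defined via precisely the valuative criterion for $\Spec{\kappa(\spmap(x_0))}\hookrightarrow\bbar{\mathfrak{X}}_s$, that the open immersions $\mathfrak{X}_s\hookrightarrow\bbar{\mathfrak{X}}_s$ in the two constructions are literally the same, and that the structure maps to $\Spec{\kappa}$ agree, so that the two uniqueness statements can be matched up.
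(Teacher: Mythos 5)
Your proof is correct and follows the same essential approach as the paper; the paper simply declares the corollary a ``straightforward consequence'' of Lemma~\ref{lemma:min_gen_rz}\textit{(iii)} without supplying the diagram chase, which is precisely what you have written out (the locality of $\iota\inv(R_x)\hookrightarrow R_x$, the commutative square, and the matching of the two uniqueness statements in the valuative criterion).
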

\begin{proof}
This is a straightforward corollary of \textit{(iii)} in the previous Lemma.
\end{proof}

\begin{lemma}[Naturality of the map $\bbar{\spmap}$] \label{l:f_restr}
Let $f: X \to Y$ be a morphism of quasi-compact separated rigid spaces and let $g: (\mathfrak{X}, \bbar{\mathfrak{X}}_s) \to (\mathfrak{Y}, \bbar{\mathfrak{Y}}_s)$ be a morphism of compactified formal schemes, such that the underlying map $g: \mathfrak{X} \to \mathfrak{Y}$ is a model of $f$. Then the following diagram commutes:
\begin{center}
    \begin{tikzcd}
    \bbar{X} \arrow[]{r}{\bbar{f}} \arrow[]{d}{\bbar{\spmap}_X} & \bbar{Y} \arrow[]{d}{\bbar{\spmap}_Y} \\
    \bbar{\mathfrak{X}}_s \arrow[]{r}{g} & \bbar{\mathfrak{Y}}_s
    \end{tikzcd}
\end{center}
where the map $\bbar{f}$ is obtained from the universal property of Huber's compactifications.
\end{lemma}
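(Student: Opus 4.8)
The plan is to check the square pointwise and to reduce everything to the valuative--criterion description of $\bbar{\spmap}$ from Construction \ref{const:map}. Since we are comparing two continuous maps of topological spaces, it suffices to show that for each $x \in \bbar{X}$ one has $\bbar{\spmap}_Y(\bbar f(x)) = g(\bbar{\spmap}_X(x))$ in $\bbar{\mathfrak{Y}}_s$. Fix such an $x$, let $x_0 \in X$ be its minimal vertical generalization and $R_x \subset \kappa(x_0)$ the valuation ring attached to $x$ by Proposition \ref{prop:vert_spec}. By construction $\bbar{\spmap}_X(x)$ is the image of the closed point under the unique lift $\theta_X \colon \Spec{R_x} \to \bbar{\mathfrak{X}}_s$ furnished by the valuative criterion of properness for $\bbar{\mathfrak{X}}_s \to \Spec{\kappa}$, where the generic-fibre arrow $\Spec{\kappa(x_0)} \to \bbar{\mathfrak{X}}_s$ is determined by $\iota_X \colon \kappa(\spmap(x_0)) \to \kappa(x_0)$ (Remark \ref{rmk:iota}) together with the point $\spmap(x_0) \in \mathfrak{X}_s \subset \bbar{\mathfrak{X}}_s$. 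Thus $\bbar{\spmap}_X(x)$ is nothing but the center on the proper $\kappa$-scheme $\bbar{\mathfrak{X}}_s$ of an explicit valuation $\bbar v_x$ built from the triple $(x_0, R_x, \iota_X)$, and similarly for $\bbar{\spmap}_Y$.

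I would then collect three compatibilities, each a routine unwinding. \emph{(1)} The universal property defining $\bbar f$ gives $\bbar f \circ j_X = j_Y \circ f$, so $\bbar f(x_0) = f(x_0) \in Y$; as $\bbar f$ is continuous and all specializations in $\bbar Y$ are vertical, $\bbar f(x)$ is a vertical specialization of $f(x_0)$, and by functoriality of the bijection of Proposition \ref{prop:vert_spec} the valuation ring attached to $\bbar f(x)$ as a vertical specialization of $f(x_0)$ is the preimage of $R_x$ under the residue map $\kappa(f(x_0)) \to \kappa(x_0)$ of $f$ at $x_0$. \emph{(2)} Since $g$ is a model of $f$, the ordinary specialization maps satisfy $g_s \circ \spmap_{\mathfrak{X}} = \spmap_{\mathfrak{Y}} \circ f$, and the homomorphisms $\iota$ of Remark \ref{rmk:iota} fit into the commutative square linking $\iota_X$, the homomorphism $\iota_{f(x_0)}$ attached to the (a priori non-minimal) generalization $f(x_0)$, and the residue maps of $g_s$ and of $f$; both follow from Construction \ref{const:spec}. \emph{(3)} Taking the center of a valuation commutes with $g \colon \bbar{\mathfrak{X}}_s \to \bbar{\mathfrak{Y}}_s$: composing $\theta_X$ with $g$ and using uniqueness in the valuative criterion for $\bbar{\mathfrak{Y}}_s \to \Spec{\kappa}$ identifies $g(\bbar{\spmap}_X(x))$ with the center on $\bbar{\mathfrak{Y}}_s$ of the push-forward valuation $g_{\ast} \bbar v_x$. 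Combining \emph{(1)}--\emph{(3)} shows that $g_{\ast} \bbar v_x$ is exactly the valuation associated with the triple $(f(x_0),\, R_{\bbar f(x)},\, \iota_{f(x_0)})$; hence $g(\bbar{\spmap}_X(x))$ is the center on $\bbar{\mathfrak{Y}}_s$ of that valuation.

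The remaining task is to recognize this center as $\bbar{\spmap}_Y(\bbar f(x))$, and this is where the real difficulty sits: $f$ may collapse vertical generalizations, so $f(x_0)$ need not be the minimal vertical generalization $y_0 \preceq f(x_0)$ of $\bbar f(x)$ in $Y$ that Construction \ref{const:map} uses. I would handle this by showing that $\bbar{\spmap}_Y$ is insensitive to which vertical generalization of $\bbar f(x)$ inside $Y$ one plugs in: vertical generalizations of a point are totally ordered (Proposition \ref{p:v_totally_orderd}) and their valuation rings nest compatibly by Proposition \ref{prop:vert_spec}, and tracing this through one sees that the valuative--criterion lift attached to $f(x_0)$ factors, after replacing its valuation ring by a suitable localization and quotient, through the lift attached to $y_0$ --- which forces the two images of the closed point to coincide. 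An alternative route, at least for affinoid $X$, is to run the whole comparison with $\bbar{\mathfrak{X}}_s$ replaced by the Riemann--Zariski space $\RZ{\mathscr{A}_\kappa}{\kappa}$ of Construction \ref{const:phi}: the analogous square commutes by naturality of Temkin's retraction (Proposition \ref{prop:temkin_retraction}) and of $\RZop(-)$, and one then post-composes with the projection $\RZ{\mathscr{A}_\kappa}{\kappa} \to \bbar{\mathfrak{X}}_s$ and invokes Lemma \ref{lemma:min_gen_rz} --- although the reduction of the general case to affine models along a common affine cover of $\mathfrak{Y}$ needs some care, since the compactifications $\bbar{\mathfrak{X}}_s$, $\bbar{\mathfrak{Y}}_s$ do not localize. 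Granting the independence statement, the triple $(f(x_0), R_{\bbar f(x)}, \iota_{f(x_0)})$ does compute $\bbar{\spmap}_Y(\bbar f(x))$ and the proof concludes; the rest of the argument is purely formal, so this generalization-matching step is the main obstacle.
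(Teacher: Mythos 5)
Your outline matches the paper's argument quite closely in steps (1)--(3): reduce to computing centers of valuations, compare the valuation rings via $\iota_f\inv(R_x) = R_y$, and use compatibility of $\spmap$, of the residue maps $\iota$ of Remark~\ref{rmk:iota}, and of taking centers under $g$. You also put your finger correctly on the single non-formal point: one must know that the point of $Y$ that Construction~\ref{const:map} uses for $\bbar{f}(x)$ --- namely the minimal vertical generalization $y_0$ of $\bbar{f}(x)$ lying in $Y$ --- is the image $f(x_0)$ of the corresponding point for $x$.

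The paper disposes of this in one stroke by invoking \cite[Lemma 1.1.10.iv]{HuEt}, which says precisely that $\bbar{f}$ is compatible with passing to the minimal generalization inside $X$, i.e.\ $\bbar{f}(x_0) = y_0$. That citation is the short circuit you were looking for; once one has it, there is no ``generalization-matching step'' to worry about and the rest is the formal diagram chase you describe. Your two proposed workarounds are both, in principle, repairable but genuinely more work. For the first (independence of the choice of vertical generalization in $Y$): the clean way to see it is through the composite-valuation description rather than a direct ``nesting of valuation rings'' as you phrase it --- if $y_0 \preceq f(x_0)$ with valuation ring $R_0 \subset \kappa(f(x_0))$ and residue field $\kappa(y_0)$, the valuation ring $R'_{\bbar f(x)} \subset \kappa(f(x_0))$ attached to $\bbar f(x)$ via $f(x_0)$ is the preimage of $R_y \subset \kappa(y_0)$ in $R_0$, and the valuative-criterion lift to $\bbar{\mathfrak{Y}}_s$ factors through the one for $R_y$ at the closed point, so the centers coincide; but you also need the diagram from Remark~\ref{rmk:iota} for the non-minimal $f(x_0)$ to match up with the one for $y_0$ along the specialization $\spmap(y_0) \rightsquigarrow \spmap(f(x_0))$ in $\mathfrak{Y}_s$, and none of this is spelled out. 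For the second (route through $\spmap_{\RZop}$ and Lemma~\ref{lemma:min_gen_rz}), you are right that it works affinoid-locally, but you have correctly flagged that the compactifications $\bbar{\mathfrak{X}}_s$, $\bbar{\mathfrak{Y}}_s$ do not restrict to affine opens, so the globalization needs an argument --- in the paper this is exactly the device used to prove Lemma~\ref{f_properties}, not Lemma~\ref{l:f_restr}, and there the roles are reversed (Lemma~\ref{l:f_restr} is used to set up the cover, so you would have to be careful not to argue in a circle). In short: the skeleton of the proof is right and the crux is correctly identified, but as written the proposal leaves the decisive step as an acknowledged gap where the paper has a one-line citation.
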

\begin{proof}
Let $x \in \bbar{X}$ and $y = \bbar{f}(x)$. Let $x_0$ and $y_0$ be their minimal generalizations. Then from \cite[Lemma 1.1.10.iv]{HuEt} we conclude that $\bbar{f}(x_0) = y_0$. Let $R_x \subset \kappa(x_0)$ and $R_y \subset \kappa(y_0)$ be valuations rings associated to $x$ and $y$ respectively. The map $f$ induces a homomorphism $\iota_f: \kappa(y_0) \to \kappa(x_0)$. From the definition of a morphism of adic spaces, we get that $\iota_f\inv(R_x) = R_y$. Now note that $g(\spmap(x_0)) = \spmap(y_0)$ and $g$ induces a homomorphism $\kappa(\spmap(y_0) \to \kappa(\spmap(x_0))$, which from obvious reasons gives us a commutative diagram 
\begin{center}
    \begin{tikzcd}
    \kappa(x_0) & \kappa(y_0) \arrow[swap]{l}{\iota_f}\\
    \kappa(\spmap(x_0)) \arrow[]{u}{}& \arrow[]{l}{} \arrow[]{u}{}\kappa(\spmap(y_0))
    \end{tikzcd}
\end{center}
By pulling back valuation rings on this diagram it is easy to conclude that $g$ maps the center of $R_x$ to the center of $R_y$.
\end{proof}

\begin{lemma} \label{f_properties}
For every quasi-compact separated rigid space $X$ and $(\mathfrak{X}, \bbar{\mathfrak{X}}_s) \in \mathcal{M}(\bbar{X})$ the map 
\[
    \bbar{\spmap}: \bbar{X} \to \overline{\mathfrak{X}_s}
\]
is continuous. Moreover, $\bbar{\spmap}$ is spectral, closed and surjective.
\end{lemma}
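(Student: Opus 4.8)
The plan is to reduce the statement to the situation already handled in Construction~\ref{const:sp_prime} — namely $X=\Spa{A}{A^\circ}$ affinoid with $\mathfrak{X}$ affine and $\bbar{\mathfrak{X}}_s$ an arbitrary $\kappa$-compactification — and then to glue. In that affine situation there is essentially nothing left to prove: by Corollary~\ref{cor:equal_maps} the map $\bbar{\spmap}$ coincides with the map $\bbar{\spmap}'=\pr\circ\spmap_{\RZop}$ of Construction~\ref{const:sp_prime}, which is continuous, spectral, closed and surjective, being the composite of $\spmap_{\RZop}$ (continuous, spectral, closed, surjective by Lemma~\ref{lemma:sp_rz_properties}) with the projection $\pr\colon\RZ{\mathscr{A}_\kappa}{\kappa}\to\bbar{\mathfrak{X}}_s$, which is continuous and surjective by construction and spectral and closed by \cite[Theorem~II.E.2.5]{FK}; all four properties are stable under composition.

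To descend from the general case I would fix $(\mathfrak{X},\bbar{\mathfrak{X}}_s)\in\mathcal{M}(\bbar{X})$ and choose a finite affine open cover $\mathfrak{X}=\bigcup_i\mathfrak{U}_i$; since $\mathfrak{X}$ is flat over $\Oh_K$, each $\mathfrak{U}_i$ is an admissible formal model of an affinoid rigid space $U_i$, and $\{U_i\}$ is an open cover of $X$. Let $\bbar{\mathfrak{U}_{i,s}}$ be the schematic closure of the open subscheme $\mathfrak{U}_{i,s}\subseteq\mathfrak{X}_s$ inside $\bbar{\mathfrak{X}}_s$; being a closed subscheme of a proper $\kappa$-scheme into which $\mathfrak{U}_{i,s}$ is schematically dense, it exhibits $(\mathfrak{U}_i,\bbar{\mathfrak{U}_{i,s}})$ as an object of $\mathcal{M}(\bbar{U_i})$. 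The inclusions $\mathfrak{U}_i\hookrightarrow\mathfrak{X}$, $\mathfrak{U}_{i,s}\hookrightarrow\mathfrak{X}_s$ and $\bbar{\mathfrak{U}_{i,s}}\hookrightarrow\bbar{\mathfrak{X}}_s$ constitute a morphism of compactified formal schemes whose underlying map $\mathfrak{U}_i\to\mathfrak{X}$ is a model of the open immersion $U_i\hookrightarrow X$, so Lemma~\ref{l:f_restr} provides a commutative square whose horizontal maps are the morphism $\bbar{U_i}\to\bbar{X}$ induced on Huber compactifications by $U_i\hookrightarrow X$ and the closed immersion $\bbar{\mathfrak{U}_{i,s}}\hookrightarrow\bbar{\mathfrak{X}}_s$, and whose vertical maps are $\bbar{\spmap}_{U_i}$ and $\bbar{\spmap}_X$. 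Two auxiliary facts make this usable: first, the $\bbar{U_i}$ form an open cover of $\bbar{X}$ — this is the compatibility of Huber's universal compactification with quasi-compact open immersions, which I would isolate as a separate lemma citing the gluing construction of $\bbar{X}$ in \cite[\S 5.1]{HuEt}; second, the $\bbar{\mathfrak{U}_{i,s}}$ form a finite closed cover of $\bbar{\mathfrak{X}}_s$, because $\mathfrak{X}_s$ is dense open in $\bbar{\mathfrak{X}}_s$, so every point of $\bbar{\mathfrak{X}}_s$ is a specialization of a generic point of some irreducible component, which lies in $\mathfrak{X}_s$, hence in some $\mathfrak{U}_{i,s}$.

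Granting these, the four conclusions follow by elementary manipulations with the finite covers, using the affine case on each $\bbar{U_i}$ and the fact that a closed immersion is continuous, spectral and closed. Continuity is the pasting lemma for the open cover $\{\bbar{U_i}\}$, on each member of which $\bbar{\spmap}_X$ restricts to $\bigl(\bbar{\mathfrak{U}_{i,s}}\hookrightarrow\bbar{\mathfrak{X}}_s\bigr)\circ\bbar{\spmap}_{U_i}$. For spectrality, the preimage of a quasi-compact open $W\subseteq\bbar{\mathfrak{X}}_s$ is the finite union over $i$ of $\bbar{\spmap}_{U_i}\inv(W\cap\bbar{\mathfrak{U}_{i,s}})$, each quasi-compact open in $\bbar{U_i}$ and hence in $\bbar{X}$. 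For closedness, the image of a closed $Z\subseteq\bbar{X}$ is $\bigcup_i\bbar{\spmap}_X(Z\cap\bbar{U_i})$, and each term is closed in $\bbar{\mathfrak{X}}_s$ since $\bbar{\spmap}_{U_i}(Z\cap\bbar{U_i})$ is closed in $\bbar{\mathfrak{U}_{i,s}}$ and the latter is closed in $\bbar{\mathfrak{X}}_s$. Surjectivity is $\bbar{\spmap}_X(\bbar{X})=\bigcup_i\bbar{\spmap}_X(\bbar{U_i})=\bigcup_i\bbar{\mathfrak{U}_{i,s}}=\bbar{\mathfrak{X}}_s$.

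The combinatorial assembly above is routine; the real content sits in the two inputs. The affine case is where the Riemann--Zariski retraction does all the work, but it is already established (Lemma~\ref{lemma:sp_rz_properties} together with Construction~\ref{const:sp_prime} and Corollary~\ref{cor:equal_maps}). The genuine obstacle, if one wants the argument to be self-contained, is the assertion that $\{\bbar{U_i}\}$ is an open cover of $\bbar{X}$; I expect this to be either quoted from \cite[\S 5.1]{HuEt} or proved directly from the characterization \textit{(a)--(d)} of Theorem~\ref{thm:comp}, by identifying the image of $\bbar{U_i}$ in $\bbar{X}$ with the set of points whose minimal vertical generalization lies in $U_i$ and checking that this set is open and that $\bbar{U_i}$ maps homeomorphically onto it.
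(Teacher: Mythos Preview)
Your reduction to the affinoid case via Lemma~\ref{lemma:sp_rz_properties}, Construction~\ref{const:sp_prime} and Corollary~\ref{cor:equal_maps} is exactly what the paper does, and your use of Lemma~\ref{l:f_restr} to set up a commutative square over a finite affine cover of $\mathfrak{X}$ is the right move. The gap is the assertion that the $\bbar{U_i}$ form an \emph{open} cover of $\bbar{X}$. This is false in general, and your proposed description of the image of $\bbar{U_i}\to\bbar{X}$ as $\{x\in\bbar{X}:x_0\in U_i\}$ is incorrect. Take $X=\mathbb{P}^1_K$, so that $\bbar{X}=X$, with the standard cover by closed unit discs $U_1=\{|T|\le 1\}$ and $U_2=\{|T|\ge 1\}$. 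The map $\bbar{U_1}\to\mathbb{P}^1$ is proper (cancellation: both source and target are proper over $\Spa{K}{\Oh_K}$), hence its image is the closure $\overline{U_1}$, which strictly contains $U_1$ --- it picks up the rank-$2$ specialization of the Gauss point pointing towards $\infty$. That extra point lies in $X$, so its minimal generalization in $X$ is itself, and it is not in $U_1$; thus your description of the image fails. Moreover $\overline{U_1}$ is not open in $\mathbb{P}^1$: any open neighbourhood of that rank-$2$ point must contain its generization (the Gauss point) and hence a genuine annulus, meeting $\{|T|>1\}$ in more than one point.

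The paper sidesteps this entirely. Rather than claiming an open cover, it observes that the map $i\colon\bigsqcup_i\bbar{U_i}\to\bbar{X}$ is proper (again by cancellation), hence closed; since its image contains the dense open $X$, it is surjective; hence $i$ is a topological quotient map. Continuity of $\bbar{\spmap}$ then follows immediately from the universal property of the quotient, because $\ttilde{i}\circ f'$ is continuous. Surjectivity, closedness and spectrality are read off from the identity $\bbar{\spmap}(Z)=\ttilde{i}\bigl(f'(i^{-1}(Z))\bigr)$, valid for any $Z\subseteq\bbar{X}$ by surjectivity of $i$, together with the corresponding properties of $f'$ and $\ttilde{i}$. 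So the combinatorial assembly you describe does go through, but with $i$ treated as a proper surjection rather than an open covering map; once you make that change, your argument and the paper's coincide.
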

\begin{proof}
Choose a finite cover by affine formal schemes $\mathfrak{X} = \bigcup^{n}_{i=0} \mathfrak{U_i}$ for $\mathfrak{U_i} = \Spf{\mathscr{A}_i}$ and the associated cover $X = \bigcup_{i=1}^n U_i$ for $U_i = \Spa{A_i}{A_i^\circ}$, where $A_i \simeq \mathscr{A}_i \otimes_{\Oh_K} K$. Let $\overline{\mathfrak{U}_{i, s}}$ be the schematic closures of respective affine open subsets in $\overline{\mathfrak{X}_s}$. By Lemma \ref{l:f_restr} the following diagram commutes
\begin{center}
    \begin{tikzcd}
        \bigsqcup_{i=1}^n \bbar{U_i} \arrow[]{r}{i} \arrow[swap]{d}{f' = \bigsqcup \bbar{\spmap}_{U_i}}
        & \bbar{X} \arrow[]{d}{\bbar{\spmap}} \\
        \bigsqcup_{i=1}^n \overline{\mathfrak{U}_{i, s}} \arrow[]{r}{\widetilde{i}} & \overline{\mathfrak{X}_s}
    \end{tikzcd}
\end{center}
Here $i$ is the induced map on compactifications and $\ttilde{i}$ is the natural closed immersion. Note that $i$ is a proper map, since proper maps of adic spaces satisfy the cancellation property (this works exactly as \cite[Proposition 10.1.19]{Vakil} does for schemes). Hence it is closed. Moreover, it is surjective, since it is closed and contains the dense open subset $X \subset \bbar{X}$. It is thus a topological quotient map. Observe that this diagram commutes on the level of sets and $f', \widetilde{i}$ are continuous. Now, by the universal property of the topological quotient, we conclude that $\bbar{\spmap}$ is indeed continuous. Since all three maps $i, \widetilde{i}, f'$ are surjections, $\bbar{\spmap}$ must be too. 

For closedness of $\bbar{\spmap}$ take $Z \subset \overline{X}$ to be an arbitrary closed subset. Then $\bbar{\spmap}(Z) = \widetilde{i}(f'(i\inv(Z)))$ and the right hand side is closed as $f'$ and $\ttilde{i}$ are closed maps. Spectrality of $\bbar{\spmap}$ is proven similarly.
\end{proof}

Let us fix the following notation $x_\alpha := \pr_\alpha(x)$, where $\pr_\alpha: \ttilde{X} \to \bbar{\mathfrak{X}}_{\alpha, s}$ for some $(\mathfrak{X}_\alpha, \bbar{\mathfrak{X}}_{\alpha, s}) \in \mathcal{M}(\bbar{X})$. In what follows $\alpha$ will always denote an index of an element of $\mathcal{M}(\bbar{X})$.

\begin{lemma}\label{l:rz}
Let $x \in X$ and let $Z$ be its closure $Z = \overline{\{ x\}} \subset \bbar{X}$ and $\ttilde{Z}$ the closure $\ttilde{Z} = \overline{\{ \varphi(x)\}} \subset \ttilde{X}$. Then $\varphi|_Z: Z \to \ttilde{Z}$ is a bijection.
\end{lemma}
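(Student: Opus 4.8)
The plan is to deduce surjectivity of $\varphi|_Z$ from the closedness of $\varphi$, and then to obtain injectivity by identifying both $Z$ and $\ttilde{Z}$, compatibly with $\varphi$, with the Riemann--Zariski space $\RZ{\kappa(x)}{\kappa}$. To begin, $\varphi = \lim_\alpha \bbar{\spmap}_\alpha \colon \bbar{X} \to \ttilde{X}$ is closed: for closed (hence pro-constructible) $C \subseteq \bbar{X}$ one has $\varphi(C) = \bigcap_\alpha \pr_\alpha^{-1}(\bbar{\spmap}_\alpha(C))$, the inclusion $\supseteq$ following from compactness of the constructible topology on $\bbar{X}$ (Proposition~\ref{prop:spectral}) applied to the downward-directed family of nonempty pro-constructible sets $\bbar{\spmap}_\alpha^{-1}(\pr_\alpha(w)) \cap C$, while each $\bbar{\spmap}_\alpha(C)$ is closed by Lemma~\ref{f_properties}. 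Consequently $\varphi(Z)$ is closed and contains $\varphi(x)$, so $\varphi(Z) \supseteq \overline{\{\varphi(x)\}} = \ttilde{Z}$, while continuity gives the reverse inclusion; thus $\varphi|_Z$ is onto. Putting $Y_\alpha := \overline{\{\bbar{\spmap}_\alpha(x)\}} \subseteq \bbar{\mathfrak{X}}_{\alpha,s}$, closedness of $\bbar{\spmap}_\alpha$ gives $Y_\alpha = \bbar{\spmap}_\alpha(Z) = \pr_\alpha(\ttilde{Z})$, whence $\ttilde{Z} = \bigcap_\alpha \pr_\alpha^{-1}(Y_\alpha) = \lim_\alpha Y_\alpha$ by Proposition~\ref{p:spectral_limit}(c).

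For injectivity, $X$ quasi-compact forces $\bbar{X}$ quasi-compact, hence $\bbar{X} \to \Spa{K}{\Oh_K}$ proper, so Proposition~\ref{prop:rz_as_closure} gives a homeomorphism $\rho \colon Z \xrightarrow{\sim} \RZ{\kappa(x)}{\kappa}$ sending a specialization $z$ of $x$ to the valuation ring $R_z \subseteq \kappa(x)$ of Proposition~\ref{prop:vert_spec}. On the other side, since $x \in X$ we have $\bbar{\spmap}_\alpha(x) = \spmap(x) =: \xi_\alpha \in \mathfrak{X}_{\alpha,s}$ (Construction~\ref{const:map}), so each $Y_\alpha$ is an integral proper $\kappa$-scheme with function field $\kappa(\xi_\alpha)$, and together with the canonical dominant morphism from $\Spec{\kappa(\xi_\alpha)}$ it is a term of the system defining $\RZ{\kappa(\xi_\alpha)}{\kappa}$ (Corollary~\ref{rz_comp}). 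Using the embeddings $\kappa(\xi_\alpha) \hookrightarrow \kappa(x)$ induced by the maps of Remark~\ref{rmk:iota}, together with the identity $\kappa(x) = \colim_\alpha \kappa(\xi_\alpha)$ — which follows from Theorem~\ref{thm:adic_as_limit}, since $\Oh_{X,x}^+ = \colim_\alpha \Oh_{\mathfrak{X}_\alpha,\xi_\alpha}$ is a filtered colimit of local rings along local homomorphisms with residue field $\kappa(x)$ — the $Y_\alpha$, now viewed as $\Spec{\kappa(x)}$-modifications of $\Spec{\kappa}$, form a subsystem of all such modifications. I claim this subsystem is cofinal; granting that, $\ttilde{Z} = \lim_\alpha Y_\alpha = \RZ{\kappa(x)}{\kappa}$ by a homeomorphism $\tau$ whose $\alpha$-th coordinate is a centre map.

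To prove the cofinality, take a proper integral $\kappa$-scheme $W$ with a dominant $\Spec{\kappa(x)} \to W$; its function field, being finitely generated over $\kappa$, lands in some $\kappa(\xi_{\alpha_0})$, yielding a dominant rational map $Y_{\alpha_0} \dashrightarrow W$, which we resolve by a blow-up $Y' \to Y_{\alpha_0}$. Extending the centre of this blow-up to a coherent ideal on $\bbar{\mathfrak{X}}_{\alpha_0,s}$, the part lying in the boundary $\bbar{\mathfrak{X}}_{\alpha_0,s} \setminus \mathfrak{X}_{\alpha_0,s}$ is absorbed by a blow-up of the compactification, the part lying in $\mathfrak{X}_{\alpha_0,s}$ by an admissible formal blow-up of $\mathfrak{X}_{\alpha_0}$ (admissible formal blow-ups realize, cofinally, all modifications of the special fibre, cf.~\cite{FK}); the resulting $(\mathfrak{X}_1, \bbar{\mathfrak{X}}_{1,s}) \in \mathcal{M}(\bbar{X})$ satisfies $Y_1 \to Y' \to W$. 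This is the step I expect to be the main obstacle; the remainder is soft topology of spectral spaces or bookkeeping with valuations and their centres.

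It remains to check that $\rho$ and $\tau$ intertwine $\varphi|_Z$ with the identity of $\RZ{\kappa(x)}{\kappa}$, i.e.\ that $\pr_\alpha(\varphi(z))$ is the centre of $R_z \cap \kappa(\xi_\alpha)$ on $Y_\alpha$. By Construction~\ref{const:map}, $\bbar{\spmap}_\alpha(z)$ is the image of the closed point under the valuative-criterion lift of the square attached to the minimal generalization $z_0 \in X$ of $z$ and the valuation ring $R_z^{z_0} \subseteq \kappa(z_0)$; expressing $\kappa(z_0)$ as the residue field of the valuation ring of $z_0$ over $x$ and $R_z^{z_0}$ as the corresponding residue of $R_z$, and using that the centre of a composite valuation is obtained by iterating centres, this rewrites — after a short compatibility check with Construction~\ref{const:spec} identifying the intermediate centre with $\spmap(z_0)$ — as the centre of $R_z \cap \kappa(\xi_\alpha)$ on $Y_\alpha$, that is, as $\pr_\alpha(\tau(\rho(z)))$. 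Hence $\varphi|_Z = \tau \circ \rho$ is a composition of bijections, and we conclude.
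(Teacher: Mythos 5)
Your overall strategy coincides with the paper's: both arguments reduce the lemma to identifying $\ttilde{Z}$ with $\RZ{\kappa(x)}{\kappa}$, matching this against the bijection $Z \simeq \RZ{\kappa(x)}{\kappa}$ supplied by Proposition~\ref{prop:rz_as_closure}, and checking compatibility with $\varphi$ by tracking centres of composite valuations. You front-load a surjectivity argument via a direct compactness proof that $\varphi(C)=\bigcap_\alpha \pr_\alpha^{-1}(\bbar{\spmap}_\alpha(C))$ for closed $C$; this is sound (and deliberately avoids the circularity that would come from invoking the closedness of $\varphi$ itself, which the paper only proves later), but it is not strictly necessary since, as in the paper, the RZ identifications at the end already yield bijectivity outright. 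The paper instead shows $\ttilde Z=\lim_\alpha \overline{\{x_\alpha\}}$ by a short open-set argument and never appeals to closedness of $\varphi$.

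The substantive point of divergence is the cofinality claim. The paper disposes of it in one line by citing \cite[Lemma 3.11]{ALY}: the system $(Z_\alpha)$ already contains all schematic blow-ups of any fixed $Z_{\alpha_0}$, from which $\lim_\alpha Z_\alpha \simeq \lim_\alpha \RZ{k(Z_\alpha)}{\kappa} \simeq \RZ{\kappa(x)}{\kappa}$ follows, the last step using $\kappa(x)=\varinjlim k(Z_\alpha)$. You attempt to reprove this cofinality directly, and you correctly flag it as the main obstacle. As written the argument has a real gap: after extending the blow-up centre of $Y' \to Y_{\alpha_0}$ to a coherent ideal on $\bbar{\mathfrak{X}}_{\alpha_0,s}$ and splitting it into a boundary-supported and an interior-supported part, you treat the two independently — blow up the compactification on the boundary, take an admissible formal blow-up of $\mathfrak{X}_{\alpha_0}$ for the interior — but you do not explain how to recombine these into a single object of $\mathcal{M}(\bbar X)$. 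The admissible formal blow-up changes the special fibre, so the boundary blow-up you performed is no longer a compactification of the new special fibre; one must pull back and check that the strict transform of $Y_{\alpha_0}$ in the resulting $\overline{\mathfrak X}_{1,s}$ still dominates $Y'$. Moreover the parenthetical assertion that admissible formal blow-ups cofinally realize all modifications of the special fibre, with a vague pointer to \cite{FK}, is precisely the content of \cite[Lemma 3.11]{ALY} — so you have not avoided the external input, only reproduced it less precisely. If you wish to keep the direct route, spell out the recombination step and pin down the exact reference; otherwise it is cleaner to cite \cite[Lemma 3.11]{ALY} as the paper does.
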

\begin{proof}
With a slight abuse of notation, we write $x \in X \subset \ttilde{X}$ instead of $\varphi(x)$.  Firstly, we will prove that $\ttilde{Z} = \lim\limits_{\longleftarrow} \overline{\{ x_\alpha\}}$, where the index $\alpha$ runs over all elements of $\mathcal{M}(\bbar{X})$. Let $y = (y_\alpha)\in RHS$. This means that for all $\alpha$ the point $y_\alpha$ is a specialization of $x_\alpha$. If $y \not\in LHS$ then there exists a quasi-compact open $U \subset \ttilde{X}$ with $y \in U$ and $x \not\in U$. By Proposition \ref{p:spectral_limit} there exists $\alpha'$ together with a quasi-compact open $V \subset \overline{\mathfrak{X}_{\alpha', s}}$ such that $U = \pr_{\alpha'}\inv(V)$. But $x_\alpha \in V$, hence $x \in U$, contradiction. Thus $RHS \subset LHS$. The other inclusion is obvious.

We will use notation $Z_\alpha = \overline{\{ x_\alpha\}} \subset \overline{\mathfrak{X}_{\alpha, s}}$. We shall now prove that there is a bijection $\lim\limits_{\longleftarrow} Z_\alpha \simeq \RZ{\kappa(x)}{\kappa}$. Recall that $\RZ{\kappa(x)}{\kappa}$ is by definition an inverse limit over all integral proper schemes $Y_i$ over $\kappa$ with an inclusion $k(Y_i) \subseteq \kappa(x)$. 
See that by Construction \ref{const:spec} we have that $\kappa(x) \simeq \lim\limits_{\longrightarrow} k(Z_\alpha)$. By \cite[Lemma 3.11]{ALY} if we pick any $Z_{\alpha_0}$ then the system $(Z_\alpha)$ contains all schematic blow-ups of $Z_{\alpha_0}$. Thus $\lim\limits_{\longleftarrow} (Z_\alpha) = \lim\limits_{\longleftarrow}\RZ{k(Z_\alpha)}{\kappa}$. And since giving a valuation on $\kappa(x) \simeq \lim\limits_{\longrightarrow} k(Z_\alpha)$ amounts to giving a valuation on each level of the limit we get that $\RZ{\kappa(x)}{\kappa} \simeq \lim\limits_{\longleftarrow} \RZ{k(Z_\alpha)}{\kappa}$. This gives us the bijection $\ttilde{Z} \simeq \RZ{\kappa(x)}{\kappa}$.

Since $\bbar{X}$ is proper over $\Spa{K}{\Oh_K}$ then by Proposition \ref{prop:rz_as_closure} we get a natural bijection $Z \simeq \RZ{\kappa(x)}{\kappa}$. We easily check that the bijections $Z \simeq \RZ{\kappa(x)}{\kappa}$ and $\ttilde{Z} \simeq \RZ{\kappa(x)}{\kappa}$ are compatible with $\varphi$.
\end{proof}

\begin{cor}
The map $\varphi: \bbar{X} \to \ttilde{X}$ is bijective.
\end{cor}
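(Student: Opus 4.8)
The plan is to combine the surjectivity of $\bbar{\spmap}$ (Lemma \ref{f_properties}) with the fibrewise bijectivity established in Lemma \ref{l:rz}. Since $\ttilde{X} = \varprojlim \bbar{\mathfrak{X}}_{\alpha, s}$ and each $\bbar{\spmap}_\alpha = \pr_\alpha \circ \varphi$ is surjective, the map $\varphi$ itself is surjective: a point of $\ttilde{X}$ is a compatible system $(x_\alpha)$, and by the cofilteredness of $\mathcal{M}(\bbar{X})$ (Lemma \ref{lemma:cofiltering}) together with surjectivity and spectrality of the $\bbar{\spmap}_\alpha$, the pro-constructible fibres $\bbar{\spmap}_\alpha\inv(x_\alpha)$ form a cofiltered system of nonempty quasi-compact (in the constructible topology) sets, whose intersection $\varphi\inv((x_\alpha))$ is therefore nonempty. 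Thus $\varphi$ is onto.

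For injectivity, suppose $\varphi(x) = \varphi(x')$ for $x, x' \in \bbar{X}$. By Theorem \ref{thm:comp}(c), $x$ lies in the closure of some $x_1 \in X$ and $x'$ in the closure of some $x_1' \in X$; taking minimal generalizations we may assume $x_1, x_1'$ are the minimal generalizations of $x, x'$ in $X$. Applying $\varphi$, the point $\varphi(x) = \varphi(x')$ has a minimal generalization inside $\varphi(X)$ at every finite level (this is the content of Lemma \ref{lemma:min_gen_rz}(ii) combined with Lemma \ref{l:rz}'s identification $\ttilde Z \simeq \RZ{\kappa(x_1)}{\kappa}$), and by uniqueness of minimal generalizations in the Riemann--Zariski picture we get $\varphi(x_1) = \varphi(x_1')$. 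But on $X$ the map $\varphi$ restricts to the identification $X \subset \ttilde X$ coming from Theorem \ref{thm:adic_as_limit} (since on $X$, $\bbar{\spmap}$ agrees with $\spmap$ by Construction \ref{const:map}), which is injective; hence $x_1 = x_1'$. Now both $x$ and $x'$ lie in $Z := \overline{\{x_1\}}$, and Lemma \ref{l:rz} says $\varphi|_Z \colon Z \to \ttilde Z$ is a bijection, so $x = x'$.

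The main obstacle I anticipate is the bookkeeping in the injectivity step: one must be sure that ``the minimal generalization of $\varphi(x)$ lying in $\varphi(X)$'' is well-defined and computed correctly at each level $\alpha$, and that passing to the limit over $\mathcal{M}(\bbar{X})$ genuinely recovers the minimal generalization $x_1$ of $x$ in $X$ rather than some other generalization. This is where Lemma \ref{lemma:min_gen_rz} and the identification of $\ttilde Z$ with $\RZ{\kappa(x_1)}{\kappa}$ in Lemma \ref{l:rz} do the real work; the surjectivity step, by contrast, is a routine compactness argument with pro-constructible sets in cofiltered limits of spectral spaces (Proposition \ref{p:spectral_limit} and Proposition \ref{prop:spectral}).

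\begin{proof}
Surjectivity of $\varphi$ follows from surjectivity of each $\bbar{\spmap}_\alpha = \pr_\alpha \circ \varphi$ (Lemma \ref{f_properties}): given $(x_\alpha) \in \ttilde X$, the sets $\bbar{\spmap}_\alpha\inv(x_\alpha) \subset \bbar X$ are nonempty and pro-constructible (Proposition \ref{prop:spectral}(c), as $\bbar{\spmap}_\alpha$ is spectral), and they form a cofiltered system by Lemma \ref{lemma:cofiltering}; being closed in the compact constructible topology on $\bbar X$ (Proposition \ref{prop:spectral}(a)), their intersection is nonempty, and any point of it maps to $(x_\alpha)$.

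For injectivity, let $x, x' \in \bbar X$ with $\varphi(x) = \varphi(x')$. By Theorem \ref{thm:comp}(c) they admit minimal vertical generalizations $x_1, x_1' \in X$. By Lemma \ref{l:f_restr} applied to the structure morphism and by Lemma \ref{lemma:min_gen_rz}(ii), the point $\bbar{\spmap}_\alpha(x_1) = \pr_\alpha(\varphi(x_1))$ is the minimal generalization of $\bbar{\spmap}_\alpha(x)$ lying in $\eta_\alpha(\mathfrak{X}_{\alpha, s})$, and similarly for $x_1'$; since $\varphi(x) = \varphi(x')$ and minimal generalizations are unique, $\varphi(x_1) = \varphi(x_1')$. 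But $\varphi|_X$ is the canonical embedding $X \hookrightarrow \ttilde X$ from Theorem \ref{thm:adic_as_limit}, which is injective, so $x_1 = x_1'$. Setting $Z = \overline{\{x_1\}} \subset \bbar X$, both $x$ and $x'$ lie in $Z$, and by Lemma \ref{l:rz} the map $\varphi|_Z$ is a bijection; hence $x = x'$.
\end{proof}
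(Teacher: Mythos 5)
Your proof is correct but takes a genuinely different route from the paper on both halves. For surjectivity, you run a standard compactness argument with pro-constructible fibres in the cofiltered limit, whereas the paper shows $X$ is pro-constructible and dense in $\ttilde X$ and then invokes Lemma~\ref{l:rz} to cover all specializations. For injectivity, the paper passes to \emph{maximal} generalizations $x', y'$ and, when $x' \neq y'$, constructs an explicit admissible blow-up that separates the strict transforms of their closures; you instead dualize via \emph{minimal} generalizations and lean on the uniqueness of the minimal generalization inside $\eta(\mathfrak X_s)$ that the Riemann--Zariski machinery provides (Proposition~\ref{rz_sheaf}, Lemma~\ref{lemma:min_gen_rz}(ii)). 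Your route is arguably slicker since it bypasses the blow-up construction entirely.

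One step does need tightening. As written, the claim that ``$\bbar{\spmap}_\alpha(x_1)$ is the minimal generalization of $\bbar{\spmap}_\alpha(x)$ lying in $\eta_\alpha(\mathfrak X_{\alpha,s})$'' conflates two pictures: $\bbar{\spmap}_\alpha$ takes values in the compactification $\bbar{\mathfrak X}_{\alpha,s}$, where the assertion is simply false---the centre $\bbar{\spmap}_\alpha(x)$ can already lie inside $\mathfrak X_{\alpha,s}$ and be strictly more special than $\spmap_\alpha(x_1)$, in which case the minimal generalization in $\mathfrak X_{\alpha,s}$ is $\bbar{\spmap}_\alpha(x)$ itself. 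Lemma~\ref{lemma:min_gen_rz}(ii) and the map $\eta_\alpha$ live in the Riemann--Zariski space $\RZ{\mathfrak X_{\alpha,s}}{\kappa}$, not in $\bbar{\mathfrak X}_{\alpha,s}$. To make your argument watertight you should work with the identification $\ttilde X \simeq \varprojlim_{\alpha} \RZ{\mathfrak X_{\alpha,s}}{\kappa}$ from Proposition~\ref{prop:sheaf_tilde_rz}, use $\spmap_{\RZop}$ in place of $\bbar{\spmap}_\alpha$, check that the levelwise minimal generalizations $\spmap_{\RZop}(x_1) = \eta_\alpha(\spmap_\alpha(x_1))$ (Lemma~\ref{lemma:min_gen_rz}(i)) form a compatible system, and then argue that their limit $\varphi(x_1)$ is the unique minimal generalization of $\varphi(x)$ in $X\subset\ttilde X$ (using Proposition~\ref{p:spectral_limit}(c) and the fact that the transition maps are specializing). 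The citation of Lemma~\ref{l:f_restr} in this step also seems misplaced; what is actually needed there is the (routine but unstated) globalization of the affinoid Lemma~\ref{lemma:min_gen_rz} to a cover of $\mathfrak X_\alpha$ by affine opens.
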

\begin{proof}
First we prove surjectivity of $\varphi$. Note, that it is enough to prove that $X \subset \ttilde{X}$ is pro-constructible dense. In that case by Proposition \ref{prop:spectral} each point $x \in \ttilde{X}$ has a generalization $x_0 \in X$. Using Lemma \ref{l:rz} we conclude that all specializations of $x_0$ are in the image of $\varphi$.

Note that for each $\alpha$ we have a quasi-compact open immersion $\mathfrak{X}_{\alpha, s} \subseteq\overline{\mathfrak{X}_{\alpha, s}}$. Moreover $X = \lim\limits_{\longleftarrow} \mathfrak{X}_{\alpha, s}$, so $\ttilde{X} \backslash X = \bigcup_\alpha \pr_\alpha\inv(\overline{\mathfrak{X}_{\alpha, s}} \backslash \mathfrak{X}_{\alpha, s})$, which is open in constructible topology. Hence $X$ is pro-constructible in $\ttilde{X}$. Suppose that $X$ is not dense and $y$ is not in its closure. 
Then there exists a quasi-compact open $U \subset \ttilde{X}$ such that $U \cap X = \emptyset$ and $y \in U$. 
By Proposition \ref{p:spectral_limit} we can find $\alpha'$ together with a quasi-compact open $V \in \overline{\mathfrak{X}_{\alpha', s}}$ such that $U = \pr_{\alpha'}\inv(V)$. But $V \cap \mathfrak{X}_{\alpha', s} \not= \emptyset$ as $\mathfrak{X}_{\alpha', s}$ is dense in $\overline{\mathfrak{X}_{\alpha', s}}$. So $U \cap X \not= \emptyset$, which is a contradiction. Hence $X$ is dense.

Now we prove injectivity of $\varphi$. Suppose that $x, y \in \bbar{X}$ and $\varphi(x) = \varphi(y) = z$. Let $x', y'$ be maximal generalizations of $x$ and $y$, respectively. If $x' = y'$ then we have contradiction with Lemma \ref{l:rz}. 

Assume $x' \not= y'$. Consider the systems $(x'_\alpha)$ and $(y'_\alpha)$. There exists $\alpha_0$ such that $x'_{\alpha_0} \not= y'_{\alpha_0}$. Let $Z_1$ and $Z_2$ be closures of $x'_{\alpha_0}$ and $y'_{\alpha_0}$ in $\mathfrak{X}_s$. Let $Y$ be the schematic blow-up of $\mathfrak{X}_s$ in $Z_1 \cap Z_2$. Then the strict transforms $\ttilde{Z_1}, \ttilde{Z_2}$ of $Z_1$ and $Z_2$ to $Y$ are disjoint by \cite[Exercise II.7.12]{Ha}.  Now by \cite[Lemma 3.11]{ALY} we can find an admissible blow-up $\mathfrak{X}$ of $\mathfrak{X}_{\alpha_0}$ such that $\mathfrak{X}_s \simeq Y$. Let $\overline{Y}$ be any compactification of $Y$. See that $(\mathfrak{X}, \overline{Y}) \in \mathcal{M}(\bbar{X})$. If now the closures of $\ttilde{Z_1}$ and  $\ttilde{Z_2}$ in $\bbar{Y}$ intersect at $W$, then $W \subset \bbar{Y} \backslash Y$. Let $\bbar{Y_0}$ be the blow-up of $\bbar{Y}$ in $W$. Observe that $\bbar{Y_0}$ is still a $\kappa$-compactification of $Y$ and moreover strict transforms of the closures of $\ttilde{Z_1}$ and  $\ttilde{Z_2}$ are disjoint. So projections of $\varphi(x)$ and $\varphi(y)$ on $\bbar{Y_0}$ are different, so in particular $\varphi(x) \not= \varphi(y)$.   
\end{proof}

\begin{proof}[Proof of Proposition \ref{prop:homeo}]
We already know that $\varphi: \bbar{X} \to \ttilde{X}$ is a bijective continuous function, so it is enough to prove that it is a closed map. First however we need to prove that $\varphi$ is spectral. Let us take a quasi-compact open $U \subset \ttilde{X}$. Then by Proposition \ref{p:spectral_limit} there exists $\overline{\mathfrak{X}_{\alpha, s}}$, with projection $\pr_\alpha: \ttilde{X} \to \overline{\mathfrak{X}_{\alpha, s}}$ together with a quasi-compact open $V \subset \overline{\mathfrak{X}_{\alpha, s}}$ such that $U = \pr_\alpha\inv(V)$. Consider the diagram.
\begin{center}
    \begin{tikzcd}
    \bbar{X} \arrow[swap]{dr}{\bbar{\spmap}_\alpha} \arrow[]{r}{\varphi} &
    \ttilde{X} \arrow[]{d}{\pr_\alpha} \\
    & \overline{\mathfrak{X}_{\alpha, s}}
    \end{tikzcd}
\end{center}
Now $\varphi\inv(U) = \bbar{\spmap}_\alpha\inv(V)$ and the latter is quasi-compact, as $\bbar{\spmap}_\alpha$ is spectral by Lemma \ref{f_properties}. Hence $\varphi$ is spectral.

Now, take a closed subset $Z \subset \bbar{X}$. Since $\varphi$ is spectral the set $\varphi(Z)$ is pro-constructible by Proposition \ref{prop:spectral}. Thus by Proposition \ref{p:spectral_limit} we have equality $\varphi(Z) = \bigcap_\alpha \pr_\alpha\inv(\bbar{\spmap}_\alpha(Z))$. See that $RHS$ is closed since we already proved that $\bbar{\spmap}_\alpha$ is closed in Proposition \ref{f_properties}. Hence $\varphi$ is a continuous closed bijection, thus a homeomorphism. 
\end{proof}

We shall finish this section with the following proposition that will be useful when considering sheaves.
\begin{prop}\label{prop:sheaf_tilde_rz}
We have a natural homeomorphism
$$
\ttilde{X} = \lim\limits_{\substack{\longleftarrow \\ (\mathfrak{X}, \overline{\mathfrak{X}_{s}}) \in \mathcal{M}(\overline{X})}} \overline{\mathfrak{X}_{s}} \quad \simeq \quad 
\lim\limits_{\substack{\longleftarrow \\ \mathfrak{X} \in \mathcal{M}(X)}} \RZ{\mathfrak{X}_s}{\kappa}
$$
\end{prop}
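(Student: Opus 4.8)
The plan is to reduce the statement to Corollary~\ref{rz_comp} followed by a Fubini-type manipulation of cofiltered limits. By Corollary~\ref{rz_comp}, for each $\mathfrak{X}\in\mathcal{M}(X)$ there is a homeomorphism $\RZ{\mathfrak{X}_s}{\kappa}\simeq\varprojlim\bbar{\mathfrak{X}_s}$, the limit running over all $\kappa$-compactifications of the special fibre $\mathfrak{X}_s$ (which is separated and of finite type over $\kappa$, since $X$ is separated); hence the right-hand side above is the iterated limit $\varprojlim_{\mathfrak{X}}\varprojlim_{\bbar{\mathfrak{X}_s}}\bbar{\mathfrak{X}_s}$, where the inner limit is over the category of $\kappa$-compactifications of $\mathfrak{X}_s$. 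The task is then to identify this iterated limit with $\varprojlim_{(\mathfrak{X},\bbar{\mathfrak{X}_s})\in\mathcal{M}(\bbar X)}\bbar{\mathfrak{X}_s}=\ttilde X$.

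I would set this up via the forgetful functor $p\colon\mathcal{M}(\bbar X)\to\mathcal{M}(X)$, $(\mathfrak{X},\bbar{\mathfrak{X}_s})\mapsto\mathfrak{X}$. It is surjective on objects (Nagata), and its fibre over $\mathfrak{X}$ is exactly the category of $\kappa$-compactifications of $\mathfrak{X}_s$, whose limit of the diagram $(-)_s$ is $\RZ{\mathfrak{X}_s}{\kappa}$. The one geometric input needed is that a morphism $\mathfrak{X}'\to\mathfrak{X}$ in $\mathcal{M}(X)$ is an admissible blow-up, so the induced map $\mathfrak{X}'_s\to\mathfrak{X}_s$ on special fibres is proper and surjective; consequently every $\kappa$-compactification $\bbar{\mathfrak{X}_s}$ of $\mathfrak{X}_s$ is a proper $\kappa$-scheme dominated by $\mathfrak{X}'_s$, i.e.\ an object of the system defining $\RZ{\mathfrak{X}'_s}{\kappa}$. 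This realizes the transition map $\RZ{\mathfrak{X}'_s}{\kappa}\to\RZ{\mathfrak{X}_s}{\kappa}$ as compatible with the projections to compactifications of $\mathfrak{X}_s$, which is precisely the statement that the homeomorphisms of Corollary~\ref{rz_comp} are natural in $\mathfrak{X}$.

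With this in hand I would run the Fubini comparison of cones directly. A cone on the diagram $(\mathfrak{X},\bbar{\mathfrak{X}_s})\mapsto\bbar{\mathfrak{X}_s}$ over a test space $T$ is a family of maps $T\to\bbar{\mathfrak{X}_s}$ compatible with \emph{all} morphisms of $\mathcal{M}(\bbar X)$, while a cone on the iterated diagram is such a family required only to be compatible with the morphisms lying over an identity of $\mathcal{M}(X)$ (fibre-wise compatibility, equivalently a map $T\to\RZ{\mathfrak{X}_s}{\kappa}$ for each $\mathfrak{X}$) and with the transition maps $\RZ{\mathfrak{X}'_s}{\kappa}\to\RZ{\mathfrak{X}_s}{\kappa}$. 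These two sets of relations generate all compatibility relations of $\mathcal{M}(\bbar X)$: given a morphism $(\mathfrak{X}',\bbar{\mathfrak{X}'_s})\to(\mathfrak{X},\bbar{\mathfrak{X}_s})$, the required identity "$T\to\bbar{\mathfrak{X}'_s}\to\bbar{\mathfrak{X}_s}$ equals $T\to\bbar{\mathfrak{X}_s}$" follows because, by Definition~\ref{def:main}, the map $\bbar{\mathfrak{X}'_s}\to\bbar{\mathfrak{X}_s}$ is a morphism of $\mathfrak{X}'_s$-modifications of $\Spec{\kappa}$, so the composite projection $\RZ{\mathfrak{X}'_s}{\kappa}\to\bbar{\mathfrak{X}'_s}\to\bbar{\mathfrak{X}_s}$ agrees with the projection $\RZ{\mathfrak{X}'_s}{\kappa}\to\RZ{\mathfrak{X}_s}{\kappa}\to\bbar{\mathfrak{X}_s}$ constructed in the previous paragraph. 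Using Lemma~\ref{lemma:cofiltering} (all categories in sight are cofiltered) one checks that the two notions of cone coincide, so that the canonical comparison map between the two limits is a homeomorphism; it is natural in $X$ by construction.

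The main obstacle is precisely this middle step. The functor $p$ is \emph{not} a Grothendieck fibration — a refinement $\mathfrak{X}'$ of $\mathfrak{X}$ carries no universal $\kappa$-compactification of $\mathfrak{X}'_s$ lying over a prescribed $\bbar{\mathfrak{X}_s}$ — so the textbook statement "a limit over a fibred category equals the iterated limit over base and fibres" does not literally apply and the Fubini comparison has to be carried out by hand. What makes it go through is the naturality in Corollary~\ref{rz_comp} discussed above, which itself rests on $\mathfrak{X}'_s\to\mathfrak{X}_s$ being proper and surjective.
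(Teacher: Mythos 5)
Your argument is correct and fills in precisely the exercise the paper leaves to the reader; the cone comparison you carry out is exactly the "prove that the RHS satisfies the same universal property as the LHS" that the paper's one-line proof sketches, with Corollary~\ref{rz_comp} used as the input in the expected way. Your remark that the forgetful functor $p\colon\mathcal{M}(\bbar X)\to\mathcal{M}(X)$ is not a Grothendieck fibration is a worthwhile observation, since it explains why the textbook Fubini theorem for limits over fibred categories cannot simply be cited and why one really does have to verify by hand that fibre-wise compatibility together with the RZ transition maps generates all compatibilities in $\mathcal{M}(\bbar X)$; the verification you give (using that $\bbar{\mathfrak{X}_s}$ is itself a $\mathfrak{X}'_s$-modification of $\Spec{\kappa}$ via the proper surjection $\mathfrak{X}'_s\to\mathfrak{X}_s$) is the right one.
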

\begin{proof}
This is an exercise in category theory, which is left to the reader. It is straightforward to define a map either way and then prove that the $RHS$ satisfies the same universal property as the $LHS$ (given that $\RZ{\mathfrak{X}_s}{\kappa}$ is the inverse limit of all $\kappa$-compactifications of $\mathfrak{X}_s$).
\end{proof}

\subsection{Comparison of structure sheaves}
We shall now compare the sheaves $\Oh_{\ttilde{X}}$ on $\ttilde{X}$ and $\Oh_{\bbar{X}}^+$ on $\bbar{X}$. First, we observe that one can construct $\Oh_{\ttilde{X}}$ using Riemann--Zariski spaces. Pick $\mathfrak{X} \in \mathcal{M}(X)$ and consider the space $\RZ{\mathfrak{X}_s}{\kappa}$. Then by Corollary \ref{rz_comp} we have $\RZ{\mathfrak{X}_s}{\kappa} = \varprojlim \overline{\mathfrak{X}_s}$, where the limit goes over all $\kappa$-compactifications of $\mathfrak{X}_s$. Now we define a sheaf $\Oh_{\overline{\mathfrak{X}}}$ on $\RZ{\mathfrak{X}_s}{\kappa}$ to be the following pullback
\begin{center}
    \begin{tikzcd}
    \Oh_{\overline{\mathfrak{X}}}\arrow[]{r}{} \arrow[]{d}{u} \arrow[dr, phantom, "\square"]
    & \Oh_{\RZ{\mathfrak{X}_s}{\kappa}} \arrow[]{d}{}\\
    \eta_* \Oh_{\mathfrak{X}} \arrow[]{r}{}& \eta_* \Oh_{\mathfrak{X}_s}
    \end{tikzcd}
\end{center}
where $\eta:\mathfrak{X}_s \to \RZ{\mathfrak{X}_s}{\kappa}$ is the natural map as in Section \ref{section:rz}.

For any $(\mathfrak{X}_i, \bbar{\mathfrak{X}}_{ij, s}) \in \mathcal{M}(\bbar{X})$ let $\pr_{ij}: \ttilde{X} \to \bbar{\mathfrak{X}}_{ij, s}$ and $r_i: \ttilde{X} \to \RZ{\mathfrak{X}_{i, s}}{\kappa}$ be the natural projections. Moreover let $p_{ij}: \RZ{\mathfrak{X}_{i, s}}{\kappa} \to \bbar{\mathfrak{X}}_{ij, s}$ also be the natural projection. Observe that $\pr_{ij} = p_{ij} \circ r_i$. Here index $i$ runs over all admissible formal models of $X$. For a fixed $i$ letter $j$ indexes all $\kappa$-compactifications of $\mathfrak{X}_{i,s}$. Now, we need the following lemma.

\begin{lemma} \label{lemma:rz_sheaf}
There is a natural isomorphism of sheaves on $\ttilde{X}$
$$
\Oh_{\ttilde{X}} = 
\lim\limits_{\substack{\longrightarrow \\ (\mathfrak{X}_i, \bbar{\mathfrak{X}}_{ij, s}) \in \mathcal{M}(\bbar{X})}}
\pr_{ij}\inv \Oh_{(\mathfrak{X}_i, \bbar{\mathfrak{X}}_{ij, s})} 
\simeq 
\lim\limits_{\substack{\longrightarrow \\ \mathfrak{X}_i \in \mathcal{M}(X)}} 
r_i \inv \Oh_{\overline{\mathfrak{X}}_i}.
$$
\end{lemma}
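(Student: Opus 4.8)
The plan is as follows. Unwinding Definition~\ref{def:main}, the first identity is simply the definition of $\Oh_{\ttilde{X}}$ (with $\pr_{ij}$ the canonical projection), so all the content lies in the second isomorphism, which I would obtain by reorganizing the filtered colimit computing $\Oh_{\ttilde{X}}$ along the forgetful functor $q\colon\mathcal{M}(\bbar{X})\to\mathcal{M}(X)$, $(\mathfrak{X},\bbar{\mathfrak{X}}_{s})\mapsto\mathfrak{X}$. Its fibre over $\mathfrak{X}_i$ is the cofiltering poset $C_i$ of $\kappa$-compactifications of $\mathfrak{X}_{i,s}$; by Corollary~\ref{rz_comp} one has $\RZ{\mathfrak{X}_{i,s}}{\kappa}=\varprojlim_{j\in C_i}\bbar{\mathfrak{X}}_{ij,s}$, by Proposition~\ref{prop:sheaf_tilde_rz} one has $\ttilde{X}=\varprojlim_{i}\RZ{\mathfrak{X}_{i,s}}{\kappa}$, and $\pr_{ij}=p_{ij}\circ r_i$. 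Throughout I will use, without further comment, that each $r_i^{-1}$ is exact and commutes with arbitrary colimits, and that a filtered colimit of sheaves of rings commutes with finite limits --- in particular with the fibre products appearing in Definition~\ref{def:main} and in the definition of $\Oh_{\bbar{\mathfrak{X}}_i}$ --- both of which reduce to the corresponding statements on stalks.

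First I would treat a fixed formal model $\mathfrak{X}_i$ and prove $r_i^{-1}\Oh_{\bbar{\mathfrak{X}}_i}\simeq\varinjlim_{j\in C_i}\pr_{ij}^{-1}\Oh_{(\mathfrak{X}_i,\bbar{\mathfrak{X}}_{ij,s})}$. Pull the cartesian square defining $\Oh_{(\mathfrak{X}_i,\bbar{\mathfrak{X}}_{ij,s})}$ back along $p_{ij}$ and pass to the filtered colimit over $j\in C_i$. One has $\varinjlim_{j}p_{ij}^{-1}\Oh_{\bbar{\mathfrak{X}}_{ij,s}}=\Oh_{\RZ{\mathfrak{X}_{i,s}}{\kappa}}$ by the very definition of the structure sheaf of a Riemann--Zariski space (\S\ref{section:rz}), and $\varinjlim_{j}p_{ij}^{-1}(\ttilde{j}_{ij})_{*}\mathcal{G}\simeq(\eta_i)_{*}\mathcal{G}$ for $\mathcal{G}$ either $\Oh_{\mathfrak{X}_i}$ or $\Oh_{\mathfrak{X}_{i,s}}$: since $\ttilde{j}_{ij}=p_{ij}\circ\eta_i$, the stalk at a point $x$ of the left-hand side is $\varinjlim_{j}\bigl((\ttilde{j}_{ij})_{*}\mathcal{G}\bigr)_{p_{ij}(x)}=\varinjlim_{j}\varinjlim_{W\ni p_{ij}(x)}\mathcal{G}(\eta_i^{-1}(p_{ij}^{-1}(W)))$, and by Proposition~\ref{p:spectral_limit} the opens $p_{ij}^{-1}(W)$ are cofinal among the open neighbourhoods of $x$ in $\RZ{\mathfrak{X}_{i,s}}{\kappa}$, so this equals $\bigl((\eta_i)_{*}\mathcal{G}\bigr)_{x}$. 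Since the filtered colimit commutes with the fibre product, $\varinjlim_{j}p_{ij}^{-1}\Oh_{(\mathfrak{X}_i,\bbar{\mathfrak{X}}_{ij,s})}$ is the fibre product of $\Oh_{\RZ{\mathfrak{X}_{i,s}}{\kappa}}\to(\eta_i)_{*}\Oh_{\mathfrak{X}_{i,s}}\leftarrow(\eta_i)_{*}\Oh_{\mathfrak{X}_i}$, which is precisely $\Oh_{\bbar{\mathfrak{X}}_i}$; applying $r_i^{-1}$ and using $\pr_{ij}=p_{ij}\circ r_i$ yields the claim.

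Second, I would interchange the two colimits, i.e. show $\varinjlim_{(\mathfrak{X}_i,\bbar{\mathfrak{X}}_{ij,s})\in\mathcal{M}(\bbar{X})}=\varinjlim_{i}\varinjlim_{j\in C_i}$. This amounts to checking that for each $\mathfrak{X}_i$ the inclusion of the fibre $C_i$ into the appropriate slice of $q$ is cofinal, i.e. that every $(\mathfrak{X}_l,\bbar{\mathfrak{X}}_{l,s})\in\mathcal{M}(\bbar{X})$ for which there is a blow-up $\mathfrak{X}_i\to\mathfrak{X}_l$ receives a morphism in $\mathcal{M}(\bbar{X})$ from some $(\mathfrak{X}_i,\bbar{\mathfrak{X}}_{i,s})$. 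To build such a morphism, choose any compactification $\mathfrak{X}_{i,s}\hookrightarrow Y$; the composite $\mathfrak{X}_{i,s}\to\mathfrak{X}_{l,s}\hookrightarrow\bbar{\mathfrak{X}}_{l,s}$ together with this open immersion gives a morphism $\mathfrak{X}_{i,s}\to\bbar{\mathfrak{X}}_{l,s}\times_{\kappa}Y$ whose composition with the second projection is an open immersion, hence (as $\bbar{\mathfrak{X}}_{l,s}$ is separated over $\kappa$) an immersion; its schematic closure $\bbar{\mathfrak{X}}_{i,s}$ is a $\kappa$-compactification of $\mathfrak{X}_{i,s}$, and the first projection restricts to a map $\bbar{\mathfrak{X}}_{i,s}\to\bbar{\mathfrak{X}}_{l,s}$ extending $\mathfrak{X}_{i,s}\to\bbar{\mathfrak{X}}_{l,s}$ --- this is the schematic-image construction already used in the proof of Lemma~\ref{lemma:cofiltering}. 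Combining the two steps gives $\Oh_{\ttilde{X}}\simeq\varinjlim_{i}r_i^{-1}\Oh_{\bbar{\mathfrak{X}}_i}$, and one checks that this is the natural isomorphism.

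I expect the main obstacle to be the base-change identity $\varinjlim_{j}p_{ij}^{-1}(\ttilde{j}_{ij})_{*}\mathcal{G}\simeq(\eta_i)_{*}\mathcal{G}$ in the first step: it is the only point where one genuinely needs the spectral-space machinery --- quasi-compact open bases, Proposition~\ref{p:spectral_limit}, and surjectivity of the Riemann--Zariski projections --- to see that pushing a sheaf forward to, and then pulling it back from, the cofiltered tower of compactifications recovers its direct image along $\eta_i$ in the colimit. Everything else in the argument is formal bookkeeping with exact functors, filtered colimits, and cofinality of index categories.
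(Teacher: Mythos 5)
Your proof is correct and follows essentially the same route as the paper's: pull the Cartesian square defining $\Oh_{(\mathfrak{X}_i,\bbar{\mathfrak{X}}_{ij,s})}$ back to $\ttilde{X}$, use that inverse image and filtered colimits commute with finite limits, identify the bottom row by a stalk computation (pushforward along the injective maps $h_{ij}$, $\eta_i$) and the upper-right corner by the definition of $\Oh_{\RZ{\mathfrak{X}_{i,s}}{\kappa}}$, and finish by interchanging the colimits over $i$ and $j$. Your write-up makes two of the paper's more elliptical steps explicit (the cofinality of the opens $p_{ij}^{-1}(W)$ via Proposition~\ref{p:spectral_limit} behind the stalk computation, and the cofinality argument behind the colimit interchange), but no new idea is involved.
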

\begin{proof}
This is a purely formal consideration based on the properties of limits and colimits of sheaves. First, let us look at the limit in the $LHS$. Note that inverse image functor $\pr_{ij}\inv$ commutes with finite limits. So we have the following Cartesian square
\begin{center}
    \begin{tikzcd}
    \pr_{ij}\inv \Oh_{(\mathfrak{X}_i, \bbar{\mathfrak{X}}_{ij, s})}  \arrow[]{r}{} \arrow[hook]{d}{} \arrow[dr, phantom, "\square"]
    & \pr_{ij}\inv \Oh_{\bbar{\mathfrak{X}}_{ij, s}} \arrow[hook]{d}{}\\
    \pr_{ij}\inv (h_{ij})_* \Oh_{\mathfrak{X}} \arrow[]{r}{}& \pr_{ij}\inv (h_{ij})_* \Oh_{\mathfrak{X}_s}
    \end{tikzcd}
\end{center}
where $h_{ij}: \mathfrak{X}_{i, s} \hookrightarrow \bbar{\mathfrak{X}}_{ij, s}$ is the open immersion that is part of the data of $(\mathfrak{X}_i, \bbar{\mathfrak{X}}_{ij, s})$. Moreover vertical arrows are injective, as the inverse image functor is exact. Now, since filtered colimits commute with finite limits we get another Cartesian square.
\begin{center}
    \begin{tikzcd}
    \varinjlim \pr_{ij}\inv \Oh_{(\mathfrak{X}_i, \bbar{\mathfrak{X}}_{ij, s})}  \arrow[]{r}{} \arrow[hook]{d}{} \arrow[dr, phantom, "\square"]
    & \varinjlim \pr_{ij}\inv \Oh_{\bbar{\mathfrak{X}}_{ij, s}} \arrow[hook]{d}{}\\
    \varinjlim \pr_{ij}\inv (h_{ij})_* \Oh_{\mathfrak{X}_i} \arrow[]{r}{}
    & \varinjlim \pr_{ij}\inv (h_{ij})_* \Oh_{\mathfrak{X}_{i, s}}
    \end{tikzcd}
\end{center}
Moreover vertical arrows are injective, as the colimit functor is exact.
After unwrapping the $RHS$ in an analogous way we get
\begin{center}
    \begin{tikzcd}
    \varinjlim r_i \inv \Oh_{\overline{\mathfrak{X}}_i}\arrow[]{r}{} \arrow[]{d}{u} \arrow[dr, phantom, "\square"]
    & \varinjlim r_i \inv \Oh_{\RZ{\mathfrak{X}_{i,s}}{\kappa}} \arrow[]{d}{}\\
    \varinjlim r_i \inv (\eta_i)_* \Oh_{\mathfrak{X}_i} \arrow[]{r}{}
    & \varinjlim r_i \inv (\eta_i)_* \Oh_{\mathfrak{X}_{i, s}}
    \end{tikzcd}
\end{center}
where $\eta_i: \mathfrak{X}_{i, s} \to \RZ{\mathfrak{X}_{i, s}}{\kappa}$ is the obvious map. Since both $\eta_i$ and $h_{ij}$ are injective on points, sheaves $r_i \inv (\eta_i)_* \Oh_{\mathfrak{X}_i}$ and $\pr_{ij}\inv (h_{ij})_* \Oh_{\mathfrak{X}_i}$ are naturally isomorphic (look at stalks). Analogously for $r_i \inv (\eta_i)_* \Oh_{\mathfrak{X}_{i, s}}$ and $\pr_{ij}\inv (h_{ij})_* \Oh_{\mathfrak{X}_{i, s}}$. So in order to finish the prove it suffices to show that $\varinjlim r_i \inv \Oh_{\RZ{\mathfrak{X}_{i, s}}{\kappa}}$ and $\varinjlim \pr_{ij}\inv \Oh_{\bbar{\mathfrak{X}}_{ij, s}}$ are naturally isomorphic. But from construction we have
$$
\lim\limits_{\substack{\longrightarrow \\ i}} r_i \inv \Oh_{\RZ{\mathfrak{X}_{i, s}}{\kappa}}
\simeq
\lim\limits_{\substack{\longrightarrow \\ i}} r_i\inv \lim\limits_{\substack{\longrightarrow \\ j}}
p_{ij}\inv \Oh_{\bbar{\mathfrak{X}}_{ij, s}}
\simeq
\lim\limits_{\substack{\longrightarrow \\ i}} \lim\limits_{\substack{\longrightarrow \\ j}} \pr_{ij}\inv \Oh_{\bbar{\mathfrak{X}}_{ij, s}}
$$
Here the second isomorphism is inverse image commuting with colimit (as colimits commute with colimits). Now it is enough to show that
$$
\lim\limits_{\substack{\longrightarrow \\ i, j}} \pr_{ij}\inv \Oh_{\bbar{\mathfrak{X}}_{ij, s}}
\simeq 
\lim\limits_{\substack{\longrightarrow \\ i}} \lim\limits_{\substack{\longrightarrow \\ j}} \pr_{ij}\inv \Oh_{\bbar{\mathfrak{X}}_{ij, s}}
$$
which is straightforward and goes in a similar way to the proof of Proposition \ref{prop:sheaf_tilde_rz}.
\end{proof}

We observe that the sheaf $\varphi\inv\Oh_{\ttilde{X}}$ is naturally a subsheaf of $j_*\Oh_X^+$. Indeed, in the proof of the previous lemma we had a natural inclusion
\begin{center}
    \begin{tikzcd}
    \Oh_{\ttilde{X}} = \varinjlim \pr_{ij}\inv \Oh_{(\mathfrak{X}_i, \bbar{\mathfrak{X}}_{ij, s})} \arrow[hook]{r}{}
    &
    \varinjlim \pr_{ij}\inv (h_{ij})_* \Oh_{\mathfrak{X}_i}.
    \end{tikzcd}
\end{center}
So it is enough to prove that the $RHS$ is canonically isomorphic to $j_*\Oh_X^+$. And this is again an easy check on stalks.

Let $j: X \hookrightarrow \bbar{X}$ be the obvious open immersion. Then there is a natural map $\Oh_{\overline{X}}^+ \to j_* \Oh_X^+$. We will show that this map is injective and identify $\Oh_{\overline{X}}^+$ and $\varphi\inv\Oh_{\ttilde{X}}$ as the same subsheaf of $j_*\Oh_X^+$.

\begin{prop}\label{prop:iso_sheaves}
The sheaves $\varphi\inv\Oh_{\ttilde{X}}$ and $\Oh_{\overline{X}}^+$ are equal as subsheaves of $j_*\Oh_X^+$.
\end{prop}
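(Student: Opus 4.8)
The plan is to compare the two sheaves stalk by stalk, both viewed as subsheaves of $j_*\Oh_X^+$ on the space $\bbar{X} \simeq \ttilde{X}$ (via the homeomorphism $\varphi$ of Proposition~\ref{prop:homeo}). First I would record that $\Oh_{\bbar{X}}^+ \to j_*\Oh_X^+$ is injective: by Theorem~\ref{thm:comp}(d) it is the restriction of the identity $\Oh_{\bbar{X}} = j_*\Oh_X$ to the subsheaf $\Oh_{\bbar{X}}^+ \subseteq \Oh_{\bbar{X}}$; and $\varphi\inv\Oh_{\ttilde{X}}$ was already realized as a subsheaf of $j_*\Oh_X^+$ in the discussion following Lemma~\ref{lemma:rz_sheaf}. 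So it suffices to fix $x \in \bbar{X}$, let $x_0 \in X$ be its minimal generalization lying in $X$ (Construction~\ref{const:map}) and $R_x \subseteq \kappa(x_0)$ the valuation ring of the vertical specialization $x$ of $x_0$ (Proposition~\ref{prop:vert_spec}), and to prove that $(\Oh_{\bbar{X}}^+)_x$ and $(\varphi\inv\Oh_{\ttilde{X}})_x$ coincide inside $(j_*\Oh_X^+)_x$. For the ambient stalk I would combine the identification $j_*\Oh_X^+ \simeq \varinjlim \pr_{ij}\inv(h_{ij})_*\Oh_{\mathfrak{X}_i}$ (discussion after Lemma~\ref{lemma:rz_sheaf}), the fact that $\spmap_i(x_0)$ is the minimal generalization of $\bbar{\spmap}(x) = \pr_{ij}(x)$ inside $\mathfrak{X}_{i,s}$ (Lemma~\ref{lemma:min_gen_rz}(i)--(ii)), and the folklore Theorem~\ref{thm:adic_as_limit}, to get a canonical identification $(j_*\Oh_X^+)_x \simeq \varinjlim_i \Oh_{\mathfrak{X}_i,\spmap_i(x_0)} = \Oh_{X,x_0}^+$.

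Next I would compute $(\varphi\inv\Oh_{\ttilde{X}})_x$ from the Riemann--Zariski description. By Lemma~\ref{lemma:rz_sheaf} and the stalk identifications in its proof, $(\Oh_{\ttilde{X}})_{\varphi(x)} \simeq \varinjlim_i (\Oh_{\bbar{\mathfrak{X}}_i})_{r_i(x)}$, and $(\Oh_{\bbar{\mathfrak{X}}_i})_{r_i(x)}$ is the fibre product of $(\Oh_{\RZ{\mathfrak{X}_{i,s}}{\kappa}})_{r_i(x)}$ and $\Oh_{\mathfrak{X}_i,\spmap_i(x_0)} = ((\eta_i)_*\Oh_{\mathfrak{X}_i})_{r_i(x)}$ over $\Oh_{\mathfrak{X}_{i,s},\spmap_i(x_0)} = ((\eta_i)_*\Oh_{\mathfrak{X}_{i,s}})_{r_i(x)}$. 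By Proposition~\ref{rz_sheaf} and Remark~\ref{rmk:rz}, together with the explicit description of $\spmap_{\RZop}(x)$ in Lemma~\ref{lemma:min_gen_rz}(iii) (which applies after shrinking $\mathfrak{X}_i$ to an affine open around $\spmap_i(x_0)$ that models an affinoid open of $X$ about $x_0$), the stalk $(\Oh_{\RZ{\mathfrak{X}_{i,s}}{\kappa}})_{r_i(x)}$ is the preimage in $\Oh_{\mathfrak{X}_{i,s},\spmap_i(x_0)}$ of the valuation ring $\iota_i\inv(R_x) \subseteq \kappa(\spmap_i(x_0))$, where $\iota_i \colon \kappa(\spmap_i(x_0)) \to \kappa(x_0)$ is the map of Remark~\ref{rmk:iota}. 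Unwinding the fibre product yields
\[
(\Oh_{\bbar{\mathfrak{X}}_i})_{r_i(x)} = \{\, f \in \Oh_{\mathfrak{X}_i,\spmap_i(x_0)} : \text{the image of } f \text{ in } \kappa(x_0) \text{ lies in } R_x \,\},
\]
the image being taken along $\Oh_{\mathfrak{X}_i,\spmap_i(x_0)} \to \Oh_{\mathfrak{X}_{i,s},\spmap_i(x_0)} \to \kappa(\spmap_i(x_0)) \xrightarrow{\iota_i} \kappa(x_0)$. Passing to the filtered colimit over $i$ (which is exact and commutes with fibre products), and matching these structure maps with the residue map of $\Oh_{X,x_0}^+$ via Construction~\ref{const:spec} and Remark~\ref{rmk:iota}, I obtain
\[
(\varphi\inv\Oh_{\ttilde{X}})_x = \{\, f \in \Oh_{X,x_0}^+ : \lambda(f) \in R_x \,\}, \qquad \lambda \colon \Oh_{X,x_0}^+ \twoheadrightarrow \Oh_{X,x_0}^+/\mathfrak{m}_{x_0}^+ = \kappa(x_0).
\]

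Finally I would compute $(\Oh_{\bbar{X}}^+)_x$ on the adic side. As $\bbar{X}$ is an analytic adic space, $x$ is analytic, and by the standard local description of $\Oh^+$ at analytic points the stalk $(\Oh_{\bbar{X}}^+)_x = \Oh_{\bbar{X},x}^+$ equals the preimage of $k(x)^+$ under $\Oh_{\bbar{X},x} \to k(x)$ (equivalently $\{f : |f(x)| \le 1\}$, since $\{|f| \le 1\}$ is a rational, hence open, subset). Because $x$ is a vertical specialization of $x_0$ we have $k(x) = k(x_0)$, and Proposition~\ref{prop:vert_spec} identifies $k(x)^+$ with the preimage of $R_x$ under the quotient $\pi \colon k(x_0)^+ \to \kappa(x_0)$. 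Tracing this through the identification $(j_*\Oh_X^+)_x = \Oh_{X,x_0}^+$ --- noting that $\lambda$ factors as $\Oh_{X,x_0}^+ \to k(x_0)^+ \xrightarrow{\pi} \kappa(x_0)$, while the residue map $\Oh_{\bbar{X},x} \to k(x) = k(x_0)$ restricts on $\Oh_{X,x_0}^+ = (j_*\Oh_X^+)_x$ to the canonical $\Oh_{X,x_0}^+ \hookrightarrow \Oh_{X,x_0} \to k(x_0)$ --- I conclude that $f \in \Oh_{X,x_0}^+$ lies in $(\Oh_{\bbar{X}}^+)_x$ exactly when $\lambda(f) \in R_x$. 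This is precisely the subring found in the previous paragraph, so the two stalks agree as subsets of $(j_*\Oh_X^+)_x$, and therefore $\varphi\inv\Oh_{\ttilde{X}} = \Oh_{\bbar{X}}^+$ as subsheaves of $j_*\Oh_X^+$.

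The step I expect to be the main obstacle is the compatibility bookkeeping across the last two paragraphs: checking that the valuation ring $R_x \subseteq \kappa(x_0)$ produced on the formal/Riemann--Zariski side by Lemma~\ref{lemma:min_gen_rz}(iii) is literally the one produced on the adic side by Proposition~\ref{prop:vert_spec}, and that the identification $(j_*\Oh_X^+)_x = \Oh_{X,x_0}^+$ intertwines the two given embeddings into $j_*\Oh_X^+$. I would also remark that the apparent subtlety that $x_0$ need not be the maximal generalization of $x$ (so that $\Oh_{\bbar{X},x}$ may be strictly larger than $\Oh_{X,x_0}$) is harmless: a germ integral at $x$ is automatically power-bounded at $x_0$, hence already lies in $(j_*\Oh_X^+)_x = \Oh_{X,x_0}^+$.
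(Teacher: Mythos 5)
Your proof is correct and follows essentially the same route as the paper: both arguments compute the stalks $(\Oh_{\bbar{X}}^+)_x$ and $(\varphi\inv\Oh_{\ttilde{X}})_x$ inside the common ambient ring $(j_*\Oh_X^+)_x \simeq \Oh_{X,x_0}^+$ and identify each with $\pi\inv(R_x)$, using Proposition~\ref{rz_sheaf}, Lemma~\ref{lemma:min_gen_rz}(iii), and the description $\Oh_{\bbar{X},x}^+ = \{f : |f(x)| \le 1\}$ (the paper cites \cite[Proposition III.6.3.1]{Morel} for the latter). The one small divergence is that the paper establishes $(j_*\Oh_X^+)_x \simeq (j_*\Oh_X^+)_{x_0}$ directly by a topological argument (pro-constructibility of $X\setminus U$ in $\bbar{X}$ and minimality of $x_0$), whereas you extract it from the formal-model colimit description of $j_*\Oh_X^+$; both routes are fine. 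One minor inaccuracy in your closing remark: $\Oh_{\bbar{X},x}$ is in fact \emph{equal} to $\Oh_{X,x_0}$ (this follows from Theorem~\ref{thm:comp}(d) combined with the stalk identification $(j_*\Oh_X)_x \simeq \Oh_{X,x_0}$, which the paper proves), so the ``apparent subtlety'' you flag does not arise; it is only the $\Oh^+$-parts that differ at the various vertical specializations.
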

\begin{proof}
Let $x \in \overline{X}$ and $x_0 \in X$ such that $x_0$ is the minimal vertical generalization of $x$. By $R_x\subset \kappa(x_0)$ we denote the valuation ring associated to $x$ and by $\pi: \Oh_{X, x_0}^+ \to \kappa(x_0)$ the natural quotient map.

We first prove that $(j_*\Oh_X^+)_x \simeq (j_*\Oh_X^+)_{x_0}$. By the definition of a stalk of a sheaf it is enough to prove that for any open neighborhood $U \subset X$ of $x_0$ there exists open neighborhood $V \subset \bbar{X}$ of $x$ such that $V \cap X \subset U$. Let $U \subset X$ be a quasi-compact open neighborhood of $x_0$. Then $X \ \backslash \ U$ is a pro-constructible subset of $X$ and hence it is also pro-constructible in $\bbar{X}$ (because $j$ is spectral by Theorem \ref{thm:comp}). Thus (by Proposition \ref{prop:spectral}) taking the topological closure $\bbar{X \ \backslash \ U}$ in $\bbar{X}$ amounts to taking all specializations. Since $x_0$ is the minimal generalization of $x$ in $X$ we get that $x \not\in \bbar{X \ \backslash \ U}$. Then $V = \bbar{X} \ \backslash \ \bbar{X \ \backslash \ U}$ is the open neighborhood of $x$ that we need.

Moreover, we have $(j_*\Oh_X^+)_x \simeq (j_*\Oh_X^+)_{x_0} \simeq \Oh_{X, x_0}^+ \simeq \Oh_{\bbar{X}, x_0}^+$. Hence the map $\Oh_{\bbar{X}, x}^+ \to (j_*\Oh_X^+)_x \simeq \Oh_{\bbar{X}, x_0}^+$ is injective by \cite[Lemma 1.1.10]{HuEt}. So $\Oh_{\overline{X}}^+$ is indeed a subsheaf of $j_*\Oh_X^+$.
Analogously $\Oh_{\overline{X}, x} \simeq (j_*\Oh_X)_x \simeq (j_*\Oh_X)_{x_0} \simeq \Oh_{X, x_0}$. Observe that this gives us inclusions $\Oh_{\overline{X}, x}^+ \subset \Oh_{X, x_0}^+ \subset \Oh_{\overline{X}, x}$. Thus according to \cite[Proposition III.6.3.1]{Morel} we have the description
\begin{gather*}
\Oh_{\overline{X}, x}^+ = \left\{ f \in \Oh_{X, x_0}^+ : |f(x)| \leq 1 \right\} = \left\{ f \in \Oh_{X, x_0}^+ : \pi(f) \in R_x \right\} = \pi\inv(R_x).
\end{gather*}

We will now show that the same statement holds for $\varphi\inv\Oh_{\ttilde{X}, x}$. Since we have already proved that $\varphi$ is a homeomorphism, we can look at the stalks of $\Oh_{\ttilde{X}, x}$ and forget about the inverse image functor $\varphi$. First, let us pick any formal model $\mathfrak{X} \in \mathcal{M}(X)$. Let $\spmap_{\RZop}: \overline{X} \to \RZ{\mathfrak{X}_s}{\kappa}$ be as in Construction \ref{const:phi} and $\eta:\mathfrak{X}_s \to \RZ{\mathfrak{X}_s}{\kappa}$ the natural immersion. By Proposition \ref{rz_sheaf} $\spmap_{\RZop}(x)$ possesses a minimal generalization $y$ in $\eta(\mathfrak{X}_s)$. By Lemma \ref{lemma:min_gen_rz} we get that $y = \spmap(x_0)$.

Recall that points on $\RZ{\mathfrak{X}_s}{\kappa}$ are certain valuations on $\mathfrak{X}_s$ with center on $\Spec{\kappa}$. According to Construction \ref{const:map} and Corollary \ref{cor:equal_maps} the point $\spmap_{\RZop}(x)$ is associated to the valuation on the following diagram
\begin{center}
    \begin{tikzcd}
    \Spec{\kappa(x_0)} \arrow[]{r}{} \arrow[]{d}{} & \Spec{\kappa(\spmap(x_0))}  \arrow[]{r}{} \arrow[]{d}{}
    & \mathfrak{X}_s \arrow[]{d}{} \\
    \Spec{R_x} \arrow[]{r}{} & \Spec{\ttilde{R_x}} \arrow[]{r}{} & \Spec{\kappa}
    \end{tikzcd}
\end{center}
where $\ttilde{R_x}$ is the preimage of $R_x$ in $\kappa(\spmap(x_0))$ and the left square is the equivalence of valuations. Note that minimality of $x_0$ is responsible for the above valuation being an element of $\RZ{\mathfrak{X}_s}{\kappa}$. Let $\pi': \Oh_{\mathfrak{X}_s, \spmap(x_0)} \to \kappa(\spmap(x_0))$. 
From Proposition \ref{rz_sheaf} we conclude that $\Oh_{\RZ{\mathfrak{X}_s}{\kappa}, x} = (\pi')\inv(\ttilde{R_x}) \subset \Oh_{\mathfrak{X}_s, \spmap(x_0)}$. Hence, if $\pi_1: \Oh_{\mathfrak{X}, \spmap(x_0)} \to \kappa(\spmap(x_0))$ is the natural quotient, then $\Oh_{\overline{\mathfrak{X}}, x} = (\pi_1)\inv(\ttilde{R_x})$. So we have the following pullback diagram for every $\mathfrak{X} \in \mathcal{M}(X)$
\begin{center}
    \begin{tikzcd}
    \Oh_{\overline{\mathfrak{X}}, \spmap_{\RZop}(x)} \arrow[hook]{r}{} \arrow[]{d}{\pi} \arrow[dr, phantom, "\square"]
    & \Oh_{\mathfrak{X}, \spmap(x_0)} \arrow[]{d}{}\\
    \ttilde{R_{x}} \arrow[hook]{r}{} & \kappa(\spmap(x_0))
    \end{tikzcd}
\end{center}
Let $\spmap_i: X \to \mathfrak{X}_i$ be the specialization map. Then we have
$$
\lim\limits_{\substack{\longrightarrow \\ \mathfrak{X}_i \in \mathcal{M}(X)}} \kappa(\spmap_i(x_0))
\simeq
\kappa(x_0).
$$
This is an easy corollary of Theorem \ref{thm:adic_as_limit}. After taking the colimit, arguing in the spirit of Lemma \ref{lemma:rz_sheaf} and using the above isomorphism we get that $\Oh_{\ttilde{X}, x} \simeq \pi\inv(R_x)$, which concludes the proof.
\end{proof}

\subsection{Closing remarks}
We would like to make a few comments on the generality of the proof and chosen methods.
\begin{enumerate}
    \item It is probably true that the considerations can be made relative, i.e. it should be possible to prove a similar theorem for a relative compactification of a morphism of rigid analytic spaces.
    \item We need the ``quasi-compact and separated'' assumption to use the Nagata theorem on compactifications. We have not thought about generalizations to quasi-paracompact spaces.
\end{enumerate}

\subsection{Relation to rigid spaces of Fujiwara and Kato}\label{section:agv}

In \cite{FK}, Fujiwara and Kato develop a very general theory of rigid spaces, extending Raynaud's theory of admissible blow-ups of formal schemes. As proved in \cite[Corollary 1.2.7]{AGV}, this framework is large enough to encompass many adic spaces which are not rigid spaces. More precisely, there exists a fully faithful functor from the category of uniform analytic adic spaces to the Fujiwara--Kato category of rigid spaces, sending an affinoid adic space $\Spa{A}{A^+}$ to $\Spf{A^+}_{\rm rig}$, the rigid generic fiber of the formal spectrum of $A^+$. Moreover, under this correspondence, the adic space is naturally identified with the inverse limit of all formal models of the corresponding rigid space. 

In particular, if $X$ is an adic space locally of finite type over $\Spa{K}{ \Oh_K}$ (simply called a ``rigid space'' above) which is reduced (so that it is uniform), then its Huber universal compactification $\overline{X}$ is again a uniform analytic adic space. Therefore, by the result of \cite{AGV}, it admits ``formal models'', whose inverse limit is homeomorphic to $\overline{X}$. This gives a statement sounding very similar to our main theorem. Let us explain why the results in fact differ in subtle ways.

As we mention in Remark~\ref{rmk:is_formal_scheme} and explain in Appendix~\ref{section:app}, the locally ringed space $\bXX = (\bXX_s, \Oh_{(\mathfrak{X}, \overline{\mathfrak{X}}_{s})})$ associated to a compactifed formal scheme $(\XX, \bXX_s) \in \mathcal{M}(\bbar{X})$ is also a formal scheme. However, the following issues arise:
\begin{enumerate}[(i)]
    \item in general, $\bXX$ is not one of the formal models of $\bbar{X}$ that are obtained in \cite{AGV},
    \item it is not clear whether the inverse limit of all admissible formal blow-ups of $\bXX$ is isomorphic to $\bbar{X}$,
    \item even if the previous statement were true, it would not imply that any two $\bXX_1, \bXX_2 \in \mathcal{M}(\bbar{X})$ would share a common blow-up, as these formal schemes are no longer Noetherian.
\end{enumerate}
We tried addressing \textit{(ii)} and \textit{(iii)}, but without success. However, we conjecture them to be true in the case of discretely valued $K$. In that situation all formal models of $X$ are Noetherian and it helps us avoid some issues with non-finite normalization (cf. \cite[\S A.5]{FK}).

Even with the above questions answered, key differences persist. Namely, our theorem provides a description of the universal compactification in terms of objects "of finite type": pairs consisting of a formal model of $X$ and a compactification of its special fiber. In contrast, formal models (in the sense of \cite{AGV}) of the rigid space associated to the universal compactification are non-noetherian even if $K$ is discretely valued. We expect this difference to be important in applications. For example, if $X$ is smooth, using our perspective on $\overline{X}$, it should suffice to look at semistable (or log smooth) formal models of $X$, endowed with a log smooth (or good in the sense of \cite{LogConn}) compactification of their special fiber. We do not know which of the formal models of the rigid space corresponding to $\overline{X}$ could play a similar role.




\subsection{Relation to specialization morphisms of Gaisin and Welliaveetil}\label{section:gaisin}
In \cite{Gaisin} the authors develop a general theory of \textit{specialization morphisms} between analytic adic spaces and schemes (see \cite[\S 2]{Gaisin}). Namely, for an analytic adic space $X$ they consider the locally ringed space
$$
X_{\text{red}} = \left(X, \Oh_X^+ / \mathfrak{m}_{\Oh_X^+}\right),
$$
where $\mathfrak{m}_{\Oh_X^+} \subset \Oh_X^+$ is a subsheaf consisting of everywhere vanishing functions. Let $S$ be a scheme. Then any morphism of locally ringed spaces $X_{\text{red}} \to S$ is called a \textit{specialization morphism} and is denoted by an arrow $X \to S$.

In our context, for a sufficiently nice rigid analytic space $X$ over $\Spa{K}{\Oh_K}$ we can get specialization morphisms $f\colon X \to \Spec{k}$, as well as $g\colon X \to \bXX_{s}$ for any $\bXX_{s} \in \mathcal{M}(\bbar{X})$. Now by \cite[Theorem 3.30]{Gaisin} each of these specialization morphisms can be canonically compactified resulting in an analytic adic space for each one of them. It turns out that, for formal reasons, each of these compactifications is naturally isomorphic to Huber's compactification $\bbar{X}$ of $X$ over $\Spa{K}{\Oh_K}$. This gives us a different way of constructing a continuous map
$$
\varphi\colon \bbar{X} \to \varprojlim_{\bXX_s \in \mathcal{M}(\bbar{X})} \bXX_{_s}.
$$
Moreover, the specialization map $\bbar{X} \to \bXX_s$ is proper in the sense of \cite[Definition 3.18]{Gaisin}. It seems that with some work, our Lemma \ref{f_properties} can be deduced from this fact.

However, we did not find an argument within the theory of Gaisin--Welliaveetil to show that $\varphi$ is an isomorphism. It remains an interesting question whether one can prove it in a more abstract way than we do in this article.

\appendix

\section{Pushout of formal schemes} \label{section:app}

An affine formal scheme $\XX = \Spf{A}$ is called \textit{adic} if $A$ has $I$-adic topology and it is $I$-adically complete and separated.
If moreover there exists an ideal of definition $J \subset A$, which is finitely generated then $\XX$ is called \textit{of finite ideal type}. If $J$ is principal and $A$ is $J$-torsion-free, then $\XX$ is called \textit{of principal ideal type}. All of these notions generalize to arbitrary formal schemes, by looking at affine covers.

Let $\XX$ be an adic formal scheme with a finitely generated ideal of definition $\II \subset \Oh_\XX$. Then the scheme $\XX_{\rs} := \left( \XX \otimes (\Oh_\XX / \II ) \right)_{\red}$ is called the \textit{reduced special fiber} of $\XX$.

\begin{prop}\label{prop:is_formal_scheme}
    Let $\mathfrak{X}$ be a principal ideal type qcqs formal scheme, $\mathfrak{X}\urs$ its reduced special fiber and $j\colon \mathfrak{X}\urs \hookrightarrow Y$ an affine schematically dense open immersion, such that $Y \backslash \XX\urs$ is the support of a Cartier divisor in $Y$. Consider a sheaf of topological rings $\Oh_{\mathfrak{X}_Y} := j_* \Oh_{\mathfrak{X}} \times_{j_* \Oh_{\mathfrak{X}\urs}} \Oh_{Y}$ and denote by $\XX_Y$ the locally topologically ringed space $\mathfrak{X}_Y := (|Y|, \Oh_{\mathfrak{X}_Y})$. Then 
    \begin{enumerate}[(i)]
        \item $\XX_Y$ is a principal ideal type adic formal scheme with reduced special fiber $Y$,
        \item the induced map $\ttilde{j}\colon \XX \to \XX_Y$ is an affine open immersion,
        \item the diagram
            \begin{center}
                \begin{tikzcd}
                    \XX \arrow[]{r}{}&
                    \XX_Y \\
                    \XX\urs \arrow[]{r}{} \arrow[]{u}{}&
                    Y \arrow[]{u}{}
                \end{tikzcd}
            \end{center}
            is a push-out in the category of locally topologically ringed spaces.
    \end{enumerate}
\end{prop}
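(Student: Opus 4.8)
The plan is to reduce everything to the affine case and there perform an explicit computation with rings. Since all three assertions are local on $Y$ and the statement concerns a pushout, I would first check that if $Y = \bigcup V_k$ is an affine open cover and the statement holds for each $V_k$ together with the appropriate pieces $\mathfrak{X}_k = \ttilde{j}^{-1}(V_k)$, then it holds for $Y$; this is routine gluing, using that $\ttilde j$ will turn out to be an open immersion so that the $\mathfrak{X}_k$ form an open cover of $\mathfrak{X}$, and that pushouts of locally topologically ringed spaces can be glued along open subspaces. So from now on assume $Y = \Spec B$ is affine, $\mathfrak{X} = \Spf \mathscr{A}$ with $\mathscr{A}$ of principal ideal type (say $\varpi$-adic, $\varpi$ a nonzerodivisor), and $\mathfrak{X}\urs = \Spec (\mathscr{A}/\varpi)_{\red} = \Spec B_0$ with $B_0$ a reduced quotient of $B$ by a nilpotent ideal, and $B \to B_0$ inverts nothing outside the Cartier divisor cutting out $Y \setminus \mathfrak{X}\urs$.

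The heart of the argument is to identify the ring $R := \Gamma(Y, \Oh_{\mathfrak{X}_Y}) = \mathscr{A} \times_{B_0^{\loc}} B$, where $B_0^{\loc} = \Gamma(\mathfrak{X}\urs, j_*\Oh_{\mathfrak X\urs})$ is the (possibly non-affine-looking, but in fact affine since $j$ is affine) ring of functions on $\mathfrak{X}\urs$ viewed inside $Y$; here I use the affineness hypothesis on $j$ to say $j_*\Oh_{\mathfrak X\urs}$ is the quasi-coherent sheaf attached to a $B$-algebra. I would show: (1) $R$ carries the $\varpi$-adic topology and is $\varpi$-adically complete and separated — completeness because $R$ is a fiber product of the complete ring $\mathscr{A}$ with $B$ over $B_0^{\loc}$, and the kernel of $R \to B$ equals the kernel of $\mathscr{A} \to B_0^{\loc}$, which is $\varpi$-power-divisible in a controlled way; (2) $\varpi$ is a nonzerodivisor in $R$, so $\mathfrak{X}_Y := \Spf R$ is of principal ideal type; (3) $R/\varpi \to B$ has nilpotent kernel and $(R/\varpi)_{\red} \simeq B$, giving $(\mathfrak{X}_Y)\urs = Y$, which is assertion (i). For (ii), the open immersion $\mathfrak{X} \hookrightarrow \mathfrak{X}_Y$ on topological spaces is just the open immersion $\mathfrak{X}\urs \hookrightarrow Y$ composed with the homeomorphisms $|\mathfrak{X}| \simeq |\mathfrak{X}\urs|$, $|\mathfrak{X}_Y| \simeq |Y|$; on structure sheaves one checks $R[1/s] \simeq \mathscr{A}$ (completed) for $s$ the equation of the Cartier divisor, because inverting $s$ kills the $B$-factor's "extra" part and the fiber product collapses — this is exactly where the Cartier-divisor hypothesis on $Y \setminus \mathfrak{X}\urs$ is used. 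For (iii), the universal property: given a locally topologically ringed space $T$ with compatible maps from $\mathfrak{X}$ and from $Y$ agreeing on $\mathfrak{X}\urs$, one gets a map $|T| \to |Y|$ from the $Y$-side (using $|\mathfrak X\urs|\hookrightarrow|Y|$ and $|\mathfrak X| = |\mathfrak X\urs|$), and on sheaves the two compatible maps factor through the fiber product sheaf $\Oh_{\mathfrak X_Y}$ by the universal property of fiber products of sheaves of rings; locality of the resulting ring map on stalks is inherited from locality on each factor.

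I expect the main obstacle to be step (1), the topological/completeness bookkeeping for $R = \mathscr{A}\times_{B_0^{\loc}} B$ — in particular checking that the natural topology on the fiber product (subspace topology from $\mathscr A \times B$) is $\varpi$-adic and complete, which requires understanding $\ker(\mathscr A \to B_0^{\loc})$ well enough to see that it is $\varpi$-adically well-behaved; the reducedness passage $\mathscr A/\varpi \rightsquigarrow (\mathscr A/\varpi)_{\red}$ means this kernel is strictly bigger than $(\varpi)$ and one must argue it is still complete and that $\varpi$ remains a nonzerodivisor in $R$. A secondary subtlety is verifying, for (ii), that the localization $R \to \mathscr A$ at the Cartier equation is an isomorphism after completion and not merely a dense inclusion; here one leans on $Y \setminus \mathfrak X\urs$ being an \emph{effective Cartier} divisor (so $s$ is a nonzerodivisor on $Y$, hence the relevant localizations are flat and the fiber product is preserved). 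Everything else — the gluing in the first paragraph and the universal property in (iii) — is formal once the affine picture is nailed down.
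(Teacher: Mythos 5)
Your plan follows the same route as the paper: affine-locally, identify $R = \Gamma(Y, \Oh_{\XX_Y})$ with the fiber product $\mathscr{A}\times_{B_0} B$, prove $R$ is $\varpi$-adically complete of principal ideal type with reduced special fiber $Y$, localize at the Cartier equation for (ii), and invoke the universal property of sheaf pullbacks for (iii). Two problems, though. First, your affine setup is inconsistent: you describe $B_0 = (\mathscr{A}/\varpi)_{\red}$ as ``a reduced quotient of $B$ by a nilpotent ideal,'' but since $j$ is a schematically \emph{dense open} immersion, $B \to B_0$ is an injective localization $B \hookrightarrow B[h^{-1}]$ (by the Cartier-divisor hypothesis), not a quotient. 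You appear to have conflated the closed immersion $\XX\urs\hookrightarrow\XX_s$ with $j$; this contradicts your own (correct) remark that $B_0$ is a localization.

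More importantly, you flag step (1) --- that the fiber-product topology on $R$ is $\varpi$-adic and complete --- as the main difficulty, but ``$\varpi$-power-divisible in a controlled way'' is not an argument. The missing idea is the inclusion $\varpi\mathscr{A} \subset R$ (because $\ker(\mathscr{A}\to B_0)$ contains $\varpi$), which yields the interleaving $\varpi^i R \subset R \cap \varpi^i\mathscr{A} \subset \varpi^{i-1}R$; hence the $\varpi$-adic topology on $R$ coincides with the subspace topology inherited from $\mathscr{A}$, and completeness follows by passing $0\to R\to\mathscr{A}\oplus B\to B_0\to 0$ to the $\varpi$-adic limit. The same inclusion gives $(R/\varpi R)_{\red}\simeq B$: any $a\in\ker(\mathscr{A}\to B_0)$ has some power $a^n\in\varpi\mathscr{A}$, so $a^{2n}\in\varpi^2\mathscr{A}\subset\varpi R$, which is what you need to identify $|\Spf{R}|$ with $|Y|$. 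For (ii), one shows directly that $\mathscr{A} = R[\ttilde{h}^{-1}]$ with no completion required: any $a\in\mathscr{A}\setminus R$ maps to $\pi(a)\in B_0\setminus B$, so $h^m\pi(a)\in B$ for some $m$, hence $\ttilde{h}^m a\in R$; since $\mathscr{A}$ is already complete this coincides with $R\langle\ttilde{h}^{-1}\rangle$ as you expected. Your plan for (iii) agrees with the paper's.
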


\begin{proof}
    We check everything affine-locally. By the assumptions, $Y$ is locally of the form $\Spec{B_0} = U \subset Y$, such that $j\inv (U) = V = \Spec{A_0}$ with $A_0 = B_0[h\inv]$ for some $h \in B_0$. If we take the open formal subscheme $\VV = \iota(V)$ of $\XX$, where $\iota\colon \XX\urs \to \XX$ is the natural closed immersion, then we can assume that $\VV = \Spf{A}$ for some $A$ with an ideal of definition $I = (\varpi)$ and $A_0 = (A / I)_\text{red}$. For simplicity, let us write $B := \Oh_{\XX_Y}(U)$. As a pullback of sheaves is already a sheaf (no need to sheafify) then $B$ is given by the pullback below.
    \begin{center}
        \begin{tikzcd}
            B \arrow[]{d}{\pi'} \arrow[]{r}{\ttilde{g}} &
            A \arrow[]{d}{\pi} \\
            B_0 \arrow[]{r}{g} &
            A_0
        \end{tikzcd}
    \end{center}
    This is equivalent to the following exact sequence of $B$-modules.
    \begin{equation}\label{eq:ses1}
        \begin{tikzcd}
            0 \arrow[]{r}{} &
            B \arrow[]{r}{\ttilde{g} \oplus \pi'} &
            A \oplus B_0 \arrow[]{r}{\pi - g} &
            A_0 \arrow[]{r}{} &
            0
        \end{tikzcd}
    \end{equation}
    
    \textit{(i)}
    Observe that $B$ is a subring of $A$ because $B_0$ is a subring of $A_0$. Since $\varpi \in B$ we can define a descending filtration $F^{\bullet} = \{ F^i \}_{i \in \mathbf{N}}$, with $F^i = \varpi^i B$. Then $F^\bullet$ defines the $\varpi$-adic topology on $B$. Let $G^\bullet = \{ G^i \}_{i \in \mathbf{N}}$, with $G^i = B \cap \varpi^i A$. Then $G^\bullet$ gives $B$ the induced topology of the subring of $A$. Obviously $F^i \subset G^i$. Moreover, $G^i = B \cap \varpi^{i-1} \left( \varpi A \right)$ and $\varpi A \subset B$, so $G^i \subset F^{i-1}$. Hence by \cite[Corollary 0.7.1.4]{FK} the topologies defined by $F^\bullet$ and $G^\bullet$ coincide. 
    Since $A, B_0$ and $A_0$ are complete, from the exact sequence above we conclude, that $B$ is complete with respect to the topology given by $G^\bullet$ (by \cite[Proposition 0.7.1.13]{FK}), hence it is also $\varpi$-adically complete. $B$ is $\varpi$-adically separated as $A$ is.

    We have thus so far proven that $B$ is an adic $\varpi$-adically complete ring. It is now enough to justify that $(|U|, \Oh_{\XX_Y}|_U) \simeq \Spf{B}$. We know that $(B / \varpi)_{\text{red}} \simeq B_0$, so as a topological space $|U| \simeq |\Spec{B_0}| \simeq |\Spf{B}|$. Let $a \in B$ and $U_a \subset U$ be the non-vanishing locus of $a$, or equivalently the non-vanishing locus of $\pi(a) \in B_0$ in $U = \Spec{B_0}$. Observe that $j\inv(U_a) = V_{a}$ (we write just $a$ instead of more formally correct $\ttilde{g}(a)$ for clarity). Then $\Oh_{\XX_Y}(U_a)$ fits into the exact sequence:
    \begin{center}
        \begin{tikzcd}
            0 \arrow[]{r}{} &
            \Oh_{\XX_Y}(U_a) \arrow[]{r}{\ttilde{g} \oplus \ttilde{\pi}} &
            A\langle a\inv \rangle \oplus B_0[\pi(a)\inv] \arrow[]{r}{\pi - g} &
            A_0[\pi(a)\inv] \arrow[]{r}{} &
            0
        \end{tikzcd}
    \end{center}
    Let us now look at the short exact sequence (\ref{eq:ses1}). We can tensor it with $B[a\inv]$ over $B$ (which is flat) obtaining:
    \begin{center}
        \begin{tikzcd}
            0 \arrow[]{r}{} &
            B [a\inv]  \arrow[]{r}{\ttilde{g} \oplus \ttilde{\pi}} &
            A[ a\inv ] \oplus B_0[\pi(a)\inv] \arrow[]{r}{\pi - g} &
            A_0[\pi(a)\inv] \arrow[]{r}{} &
            0
        \end{tikzcd}
    \end{center}
    Observe that $\varpi A \subset B$ so also $\varpi A[a\inv] \subset B[a\inv]$. Hence $B[a\inv]$ is open in $A[a\inv]$. Thus the previous exact sequence stays exact after completion by \cite[Proposition 0.7.4.8]{FK}. So we get:
    \begin{center}
        \begin{tikzcd}
            0 \arrow[]{r}{} &
            B \langle a\inv \rangle \arrow[]{r}{\ttilde{g} \oplus \ttilde{\pi}} &
            A\langle a\inv \rangle \oplus B_0[\pi(a)\inv] \arrow[]{r}{\pi - g} &
            A_0[\pi(a)\inv] \arrow[]{r}{} &
            0
        \end{tikzcd}
    \end{center}
    Hence $B\tate{a\inv} \simeq \Oh_{\XX_Y}(U_a)$, which was the desired relation.

    \textit{(ii)} Let $\ttilde{h} \in B$ be such that $\pi' (\ttilde{h}) = h$. We will now show that $A \simeq B[\ttilde{h}\inv] \simeq B\langle\ttilde{h}\inv \rangle$. If $a \in A \backslash B$ then $\pi(a) \in A_0 \backslash B_0$, so there exists aa positive integer $m$, such that $h^m \cdot \pi(a) \in B_0$. Then $\pi(\ttilde{h}^m \cdot a) \in B_0$, so $\ttilde{h}^m \cdot a \in B$, since $B = \pi\inv(B_0)$. So $A \simeq B[\ttilde{h}\inv]$, but since $A$ is $\varpi$-adically complete, then also $A \simeq (B[\ttilde{h}\inv])^{\wedge} \simeq B\langle \ttilde{h}\inv \rangle$.

    \textit{(iii)}
        Assume we are in the situation, where we are given a solid diagram as below:
    \begin{center}
        \begin{tikzcd}
            & & \ZZ \\
            \XX \arrow[swap]{r}{} \arrow[bend left=25]{urr}{f}&
            \XX_Y \arrow[dashed, swap]{ur}{\exists! \ttilde{h}} \\
            \XX\urs \arrow[]{u}{} \arrow[]{r}{j} &
            Y \arrow[]{u}{\iota} \arrow[bend right=25, swap]{uur}{h}
        \end{tikzcd}
    \end{center}
    Note that vertical arrows are homeomorphisms. So as a map of topological spaces, $\ttilde{h}$ is uniquely determined, as it is simply equal to $h$. Thus it is enough to show unique $\ttilde{h}^{\#}$ for the map of sheaves of topological rings. We have the following solid diagram:
    \begin{center}
        \begin{tikzcd}
            & & \Oh_{\ZZ} \arrow[bend right=25]{dll}{} \arrow[bend left=25]{ddl}{} \arrow[dashed]{dl}{\exists! \ttilde{h}^{\#}}\\
            f_* \Oh_{\XX} \arrow[]{d}{}&
            h_* \Oh_{\XX_Y} \arrow[]{d}{} \arrow[]{l}{} \\
            f_* \Oh_{\XX\urs}&
            h_* \Oh_{Y} \arrow[]{l}{}
        \end{tikzcd}
    \end{center}
    Pushforwards commute with limits, so the left-hand square on the diagram is a pullback square. Hence, by the universal property of the pullback, there exists a unique map of sheaves $\ttilde{h}^{\#}$.
\end{proof}

\begin{rmk}\begin{enumerate}[(i)]
    \item The conclusion \textit{(i)} in the proposition above is true under weaker assumptions. We only need to know that $\XX\urs \to Y$ is an affine schematically dominant morphism of schemes.
    \item Instead of taking the reduced special fiber we could have taken any closed subscheme of $V(I)$ given by killing some nilpotents, where $I$ is the ideal of definition of $X$, and the proof will not change. In particular, it applies to our compactified formal schemes. 
\end{enumerate}
\end{rmk}

\bibliography{references}
\end{document}